\documentclass[reqno]{amsart}
\usepackage[margin=3.5cm]{geometry}
\usepackage[dvipsnames]{xcolor}

\usepackage{amsfonts, amsthm, amssymb,verbatim,amscd,amsmath, blindtext}
\usepackage{comment}

\usepackage{multirow,multicol}
\usepackage{graphicx}
\graphicspath{ {./images/} }
\usepackage{enumitem,yhmath}
\usepackage{amssymb}
\usepackage{ytableau}
\usepackage{tikz}
\usetikzlibrary{decorations.markings}
\usetikzlibrary{calc}
\usepackage{tikz-cd}
\usepackage{float, pifont}
\usepackage[utf8]{inputenc}
\usepackage{mathtools,nccmath}
\usepackage{float}
\usepackage[pagebackref,colorlinks=true,citecolor=blue,linkcolor=BrickRed]{hyperref}
\usepackage{caption} 
\newtheorem{theorem}{Theorem}[section]
\newtheorem{lemma}[theorem]{Lemma}
\newtheorem{corollary}[theorem]{Corollary}
\newtheorem{proposition}[theorem]{Proposition}

\theoremstyle{remark}
\newtheorem{remark}[theorem]{Remark}
\newtheorem{question}[theorem]{Question}

\theoremstyle{definition}
\newtheorem{definition}[theorem]{Definition}

\newtheorem{example}[theorem]{Example}

\DeclareMathOperator{\mult}{mult}
\DeclareMathOperator{\cone}{cone}
\DeclareMathOperator{\ind}{Ind}

\title{Multiplicity of negative one of independence polynomials of graphs}

\author[Bhardwaj]{Om Prakash Bhardwaj}
\address{Chennai Mathematical Institute, Siruseri, Tamil Nadu 603103. India}
\email{omprakash@cmi.ac.in}

\author[Chau]{Trung Chau}
\address{Chennai Mathematical Institute, Siruseri, Tamil Nadu 603103. India}
\email{chauchitrung1996@gmail.com}

\author[Ikram]{Sayeed Ikram}
\address{Chennai Mathematical Institute, Siruseri, Tamil Nadu 603103. India}
\email{sayeedikram.ug2024@cmi.ac.in}

\author[Lather]{Gargi Lather}
\address{Chennai Mathematical Institute, Siruseri, Tamil Nadu 603103. India}
\email{gargilather@gmail.com}

\author[Nanjangud]{Vasudeva Nanjangud}
\address{Chennai Mathematical Institute, Siruseri, Tamil Nadu 603103. India}
\email{vasudevasn.ug2024@cmi.ac.in}

\author[Venugopal]{Chitra Venugopal}
\address{Chennai Mathematical Institute, Siruseri, Tamil Nadu 603103. India}
\email{chitrav@cmi.ac.in}

\begin{document}
	
    \begin{abstract} 
        We initiate the study of the multiplicity of negative one of independence polynomials of graphs. In this article, we simply refer to this as the \emph{multiplicity} of a graph. As applications, we provide a graph-theoretic description of trees whose independence complexes are contractible, give a new sufficient condition for  independence polynomials of graphs to be log-concave, and finally, determine possible pairs $(\mult_{-1}P_G, \alpha(G))$, where $P_G$ denotes the independence polynomial of $G$, and $\alpha(G)$ the independence number. The study of the pairs $(\mult_{-1}P_G, \alpha(G))$ is equivalent to finding all pairs of the numerator degree and denominator degree of the Hilbert series of the edge ideal of $G$. We also use spectral graph theory to obtain results on the multiplicity of line graphs of forests. Finally, we give some translations and applications in combinatorial commutative algebra.
    \end{abstract}

    \maketitle

\section{Introduction}

All graphs in this article are assumed to be finite and simple. For a graph $G$, let $P_G(x)$ denote the \emph{independence polynomial} of $G$, i.e., 
\[
P_G(x)=\sum_{i=0}^{\infty} g_ix^i
\]
where $g_0=1$, and $g_i$ denotes the number of independent sets of $G$ of size $i$, for each $i>0$. Independence polynomials are ubiquitous in the literature of graph theory. We refer to  \cite{MR2186466} for a comprehensive survey. These polynomials are also known in statistical physics as partition functions of hard-core lattice gas \cite{MR2130890}. The evaluation $P_G(-1)$ is sometimes referred to as the \emph{modularity} of $G$ \cite{modularity-thesis}. The values of $P_G(-1)$, and in particular, when one has $|P_G(-1)|\leq 1$, have been studied extensively in different contexts \cite{MR2393250,MR4145813,MR3518424,MR2123823,MR2240773,MR3027601}.

The independent sets of a graph $G$ form a simplicial complex, called the \emph{independence complex} of $G$, denoted by $\ind(G)$. Modularity has subtly appeared in the study of independence complex, as we have $P_G(-1)=-\tilde{\chi}(\ind(G))$, where $\tilde{\chi}(\ind(G))$ denotes the reduced Euler characteristic of $\ind(G)$. Recall that a \emph{ternary graph} is one that does not have any induced cycle of length divisible by 3. It was a conjecture by Galai and Meshulam, now proven by  Chudnovsky, Scott, Seymour, and Spirkl \cite{MR4145813}, that 
\[
\text{$G$ is a ternary graph} \Longleftrightarrow |P_H(-1)|\leq 1 \text{ for any induced subgraph $H$ of $G$}.
\]
Kim \cite{MR4403044} provided a different characterization for ternary graphs, confirming Engstr\"om's conjecture \cite{engstrom2020topologicalkalaimeshulamconjecture}:
\begin{multline*}
    \text{$G$ is a ternary graph} \Longleftrightarrow \ind(H) \text{ is either contractible or homotopy equivalent to a sphere,} \\
    \text{for any induced subgraph $H$ of $G$}.
\end{multline*}
These two characterizations are directly related since for any graph $G$, $\ind(G)$ being contractible implies that $P_G(-1)=0$, and $\ind(G)$ being homotopy equivalent to a sphere implies that $P_G(-1)=\pm 1$. Therefore, providing that $G$ is ternary, we have an equivalent statement: $\ind(G)$ is contractible if and only if $P_G(-1)=0$. A homological/topological characterization for this was obtained recently by Faridi and Holleben \cite{faridi2025sphericalcomplexes}.

In this article, we provide a graph-theoretic description of trees $T$ that satisfy $P_T(-1)=0$, or equivalently, have a contractible independence complex. This is our first main result. 

\begin{theorem}[\protect{Theorem~\ref{thm:tree-description}}]\label{thm:A}
    Let $T$ be a tree. Then the independence complex $\ind(T)$ is contractible if and only if there exists a tree $T'$ and a family of rooted trees $\mathcal{C}$ such that $T$ is isomorphic to $\mathcal{G}(T',\mathcal{C})$ as graphs.
\end{theorem}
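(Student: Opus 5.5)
The plan is an induction on $|V(T)|$, using two standard tools for independence complexes. The first is the \emph{fold lemma}: if $u,w$ are distinct vertices with $N(u)\subseteq N(w)$, then $\ind(G)\simeq \ind(G-w)$. The second is the join formula $\ind(G_1\sqcup G_2)=\ind(G_1)*\ind(G_2)$. Together with $\ind(K_2)=S^0$ and the fact that $\ind$ of a graph with an isolated vertex is a cone, these reduce the question to a purely combinatorial pruning procedure on trees.

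Since trees are ternary, by the discussion in the introduction it is equivalent to prove the statement with ``$\ind(T)$ contractible'' replaced by ``$P_T(-1)=0$''. I would switch between the two formulations as convenient. The recursion $P_G=P_{G-v}+xP_{G-N[v]}$ evaluated at $x=-1$ is handy for the direction showing that $\mathcal{G}(T',\mathcal{C})$ has modularity $0$.

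The key step is a pruning lemma. Let $u$ be a leaf of a tree $T$ with neighbor $v$. Every other neighbor $w$ of $v$ satisfies $N(u)=\{v\}\subseteq N(w)$, so all of them can be folded away. This leaves the edge $uv$ as a component, and hence
\[
\ind(T)\simeq S^0 * \ind\bigl(T-N[v]-u\bigr)=\Sigma\,\ind\bigl(T-N[v]-u\bigr).
\]
The forest $T-N[v]-u$ has fewer vertices, and a suspension is contractible if and only if the space is. Iterating, $\ind(T)$ is contractible if and only if some stage of this leaf-pruning process produces an isolated vertex. This is a well-defined criterion, although a priori it depends on the choices of leaves.

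Next I would unwind the definition of $\mathcal{G}(T',\mathcal{C})$ (as given in the body of the paper: rooted trees from $\mathcal{C}$ glued along $T'$). The aim is to show that the pruning process applied to $\mathcal{G}(T',\mathcal{C})$ can be steered, rooted tree by rooted tree, so that it eventually isolates a vertex. This gives the ``if'' direction. I would do it by induction on the total size of $\mathcal{C}$: pruning a leaf-and-neighbor pair inside one member of $\mathcal{C}$ should produce a forest each of whose components is again of the form $\mathcal{G}(T'',\mathcal{C}'')$ or a smaller rooted tree, and at least one of these components has the forced isolated vertex.

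For the ``only if'' direction, suppose $\ind(T)$ is contractible. Pick the leaf $u$ and neighbor $v$ used in the first pruning step. By induction, some component of $T-N[v]-u$ is either an isolated vertex or of the form $\mathcal{G}(T'',\mathcal{C}'')$. I would then reconstruct $T$ by reattaching $N[v]\cup\{u\}$ and check that the result is again of the form $\mathcal{G}(T',\mathcal{C})$. The other components of $T-N[v]-u$ are arbitrary trees hanging off the neighbors of $v$, and these should be absorbed as new members of the family $\mathcal{C}$.

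I expect this reconstruction step to be the main obstacle. One has to show that the class $\{\mathcal{G}(T',\mathcal{C})\}$ is closed under exactly this ``reverse pruning'' operation. A secondary issue is that a different choice of leaf could lead to a different decomposition. To handle both, I would first prove an invariance statement: contractibility does not depend on the pruning order, which follows since each step is a homotopy equivalence up to suspension. This lets me choose the leaf $u$ conveniently, for instance a leaf at maximum distance from a fixed root, so that the reattached piece is a star-like rooted tree that fits directly into $\mathcal{C}$.
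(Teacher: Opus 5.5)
Your overall frame matches the paper's: induction on $|V(T)|$, a leaf $u$ with neighbor $v$, and a reduction from $T$ to $T\setminus N[v]$. Your fold-lemma computation $\ind(T)\simeq\Sigma\,\ind(T\setminus N[v])$ is correct; it is the topological counterpart of the identity $P_T(-1)=-P_{T\setminus N[v]}(-1)$ that the paper uses. One caution: ``$\Sigma X$ contractible iff $X$ contractible'' is false for general $X$. It is harmless here only because independence complexes of forests are inductively contractible or spheres, or because you can work with $P_T(-1)$ throughout.

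The real gap is in the ``only if'' direction. The reconstruction step, which you flag as the main obstacle, is the actual content of the proof, and you do not supply it. The remedy you sketch is also misdirected. No invariance statement or special choice of leaf is needed. A deepest leaf also does not make the reattached piece star-like: what must be reattached to the good component $T_i$ is all of $T\setminus V(T_i)$. That set contains every other component of $T\setminus N[v]$; for example, if $T_i$ is the subtree below a sibling of $v$, it contains everything above the parent of $v$. Nor can leftover subtrees be added ``as new members of $\mathcal{C}$'', since members of $\mathcal{C}$ may only be glued at the subdivision vertices $w_{x,e}$.

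The missing argument goes as follows. Since $T$ is a tree, $T_i\cong\mathcal{G}(T_i',\mathcal{C}_i)$ is joined to the rest of $T$ by a single edge $\{u_i,v_i\}$ with $v_i\in N(v)$. Now split according to where $u_i$ sits.
\begin{itemize}
\item If $u_i\in V(T_i')$, let $T'$ be $T_i'$ with the new leaf $u$ attached at $u_i$. Then $u,v,v_i,u_i$ is the $3$-subdivision of the new edge. The component of $T\setminus\{u,v_i\}$ containing $v$ and the component of $T\setminus\{v,u_i\}$ containing $v_i$ become the new rooted trees at $v$ and $v_i$. An isolated-vertex component is just the case $T_i'=K_1$.
\item If instead $u_i$ lies in a grafted tree $T_{u',e'}$, possibly as its root, keep $T'=T_i'$. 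Enlarge $T_{u',e'}$ by gluing all of $T\setminus V(T_i)$ to it along $\{u_i,v_i\}$.
\end{itemize}
This works for every choice of leaf $u$.

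Your ``if'' direction also has a hole. Inducting on the total size of $\mathcal{C}$ leaves the base case $(T')^{*}$ (all rooted trees trivial) untreated. Moreover, pruning a leaf inside some $T_{x,e}$ may delete the root $w_{x,e}$, or even $x$ itself. So the claim that every resulting component is again a grafting, or a piece of $\mathcal{C}$, fails, and you would need a case analysis to find one that is. The paper avoids this by pruning at a leaf $u$ of the base tree $T'$. Such a $u$ is a leaf of $\mathcal{G}(T',\mathcal{C})$ with neighbor $w_{u,e}$. Deleting $N[w_{u,e}]$ leaves $\mathcal{G}(T'\setminus\{u\},\mathcal{C}')$ as a component, where $\mathcal{C}'$ is the restriction of $\mathcal{C}$. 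Induction on $|V(T')|$ then terminates at the isolated vertex $\mathcal{G}(K_1,\emptyset)$.
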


We refer to Section~\ref{sec:pseudo-forests} for the exact definition of the grafting operation $\mathcal{G}$. As a preview, we present an example of what such trees look like. Let $T'$ be the path on three vertices. We  replace every edge of $T'$ with a path on four vertices. We call the resulting graph the \emph{3-subdivision} of $T'$. There are four new vertices compared to $T'$, and we name them $v_1,v_2,v_3,v_4$. Next, let  $\mathcal{C}=\{(T_1,u_1),(T_2,u_2),(T_3,u_3),(T_4,u_4)\}$ be a family of rooted trees. By a \emph{rooted tree}, we mean a tree together with a fixed vertex of the tree. The \emph{grafting} of $T'$ and $\mathcal{C}$, denoted by $\mathcal{G}(T',\mathcal{C})$, is defined to be the graph obtained by identifying $u_i$ with $v_i$ for each $i=1,2,3,4$. We provide a picture below.

\begin{figure}[h]
    \centering
    \begin{tabular}{c}
         \begin{tikzpicture}
             \coordinate (a1) at (0,0);
			\coordinate (a2) at (1,0);
			\coordinate (a3) at (2,0);
			
			\draw (a1)--(a2)--(a3);

            \coordinate (b1) at (5,0);
			\coordinate (b2) at (6,0);
			\coordinate (b3) at (7,0);
            \coordinate (b4) at (8,0);
            \coordinate (b5) at (9,0);
            \coordinate (b6) at (10,0);
            \coordinate (b7) at (11,0);

			\draw (b1)--(b2)--(b3)--(b4)--(b5)--(b6)--(b7);

            \foreach \p in {a1,a2,a3,b1,b2,b3,b4,b5,b6,b7}
			\fill (\p) circle (1.5pt);

            \node[above] at (b2) {$v_1$};
            \node[above] at (b3) {$v_2$};
            \node[above] at (b5) {$v_3$};
            \node[above] at (b6) {$v_4$};
         \end{tikzpicture} \\\\
         \begin{tikzpicture}	
			\coordinate (b1) at (0,0);
			\coordinate (b2) at (1.5,0);
			\coordinate (b3) at (3,0);
            \coordinate (b4) at (4.5,0);
            \coordinate (b5) at (6,0);
            \coordinate (b6) at (7.5,0);
            \coordinate (b7) at (9,0);

			\draw (b1)--(b2)--(b3)--(b4)--(b5)--(b6)--(b7);

            \foreach \p in {b1,b2,b3,b4,b5,b6,b7}
			\fill (\p) circle (1.5pt);

			\node[above] at (b2) {$v_1=u_1$};
            \node[below] at (b3) {$v_2=u_2$};
            \node[below] at (b5) {$v_3=u_3$};
            \node[above] at (b6) {$v_4=u_4$};

            \draw (1.5, -0.54) circle (15pt);
            \draw (7.5, -0.54) circle (15pt);
            \draw (3, 0.54) circle (15pt);
            \draw (6, 0.54) circle (15pt);

	           \node at (1.5,-0.5) {{ $T_{1}$}};
               \node at (7.5,-0.5) {{ $T_{4}$}};
               \node at (3,0.5) {{ $T_{2}$}};
               \node at (6,0.5) {{ $T_{3}$}};
		\end{tikzpicture} 
    \end{tabular}
    \caption{A tree $T'$, its 3-subdivision, and the grafting $\mathcal{G}(T',\mathcal{C})$.}
    \label{fig:example-intro}
\end{figure}

We remark that determining $P_T(-1)$ for a tree $T$ has been studied before in different areas of mathematics \cite{faridi2025sphericalcomplexes,MR2791291,pham2026contractibleindependencecomplexestrees}. However, the known results are based on the output after inputting $T$ into an algorithm, while Theorem~\ref{thm:A} provides the explicit graph-theoretic description of such trees. In this line of attack, we also provide an algorithmic method to determine $P_T(-1)$, for a larger class of graphs: pseudo-forests (Theorem~\ref{thm:branch-algorithm}). Our result is closest to \cite{pham2026contractibleindependencecomplexestrees}, with the difference in the output: a tree reduces to a union of isolated vertices via our process, but it reduces to a path via their process.

Next, we initiate the study of the multiplicity of $x=-1$ of the independence polynomial $P_G(x)$, which we denote by $\mult_{-1} P_G$. Throughout this article, we will simply refer to this as the \emph{multiplicity of a graph}. Let $\alpha(G)$ denote the independence number of a graph $G$, i.e., $\alpha(G)=\deg P_G$.   Recall that a polynomial
\[
a_0+a_1x+\cdots +a_nx^n
\]
with positive coefficients is called \emph{log-concave} if $a_{i}^2\geq a_{i-1}a_{i+1}$ for any $1\leq i \leq n-1$. A log-concave polynomial is always unimodal. It is a problem of great interest in graph theory to determine which graphs have log-concave/unimodal independence polynomials \cite{Chudnovsky, Hamidoune, Horrocks, Levit-Carpathian, Levit-Congressus, Schwenk, Stanley}.  Our next main result gives a new such class of graphs.

\begin{theorem}[\protect{Theorem~\ref{thm:log-concave}}]
    If $\mult_{-1}(G)\geq \alpha(G)-2$, then $P_G$ is log-concave.
\end{theorem}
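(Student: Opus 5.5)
Write $n=\alpha(G)=\deg P_G$ and $m=\mult_{-1}P_G$, so that $P_G(x)=(1+x)^m Q(x)$ with $Q(-1)\neq 0$ and $\deg Q=n-m\le 2$. The plan is to reduce log-concavity of $P_G$ to two facts. The first is that $(1+x)^m$ is log-concave with no internal zeros. The second is that $Q$ has nonnegative coefficients with no internal zeros and is log-concave. We then invoke the classical fact that a product of log-concave polynomials with nonnegative coefficients and no internal zeros is again log-concave with no internal zeros (Hoggar's theorem, or directly via Toeplitz total positivity of order two). Since $P_G$ has positive coefficients $g_0,\dots,g_n$ (any subset of an independent set is independent), we only need the statement for $Q$.

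We split by $\deg Q\in\{0,1,2\}$.
\begin{itemize}
\item If $\deg Q=0$, then $Q=1$ (since $g_0=1$) and $P_G=(1+x)^n$, which is log-concave.
\item If $\deg Q=1$, then $Q=1+cx$. Comparing leading coefficients gives $c=g_n>0$, and any linear polynomial with positive coefficients is log-concave.
\item If $\deg Q=2$, then $Q=1+bx+cx^2$, and comparing coefficients yields $c=g_n>0$ and $b=g_1-m=|V(G)|-m$. We must show $b>0$ and $b^2\ge c$.
\end{itemize}
For $b^2\ge c$ it is enough to show $b\ge c$, because the case $b\ge 1$ then gives $b^2\ge b\ge c$.

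The main obstacle is this last coefficient inequality: we need $|V(G)|-m \ge g_n$ and $|V(G)|-m>0$, where $g_n$ is the number of maximum independent sets and $m=n-2$. To get it, we first compare the coefficient of $x^{n-1}$: $g_{n-1}=b+mc=b+(n-2)g_n$. Next we double count the pairs (maximum independent set $S$, vertex $v\in S$). Each $S$ yields $n$ subsets of size $n-1$. Call an $(n-1)$-independent set \emph{extendable} if it lies in some maximum independent set; an extendable set can lie in several maximum sets $S$. This is where care is needed. The bound $g_{n-1}\ge ng_n$ fails in general (e.g.\ for $K_{1,3}$-type examples), so naive counting will not work. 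The better route is to use $Q(-1)\neq0$ together with real-rootedness or positivity.

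Concretely, I would use the discriminant of $Q$. If $Q$ has real roots, then $b^2\ge 4c\ge c$ follows immediately once $b>0$. If $Q$ has nonreal roots, then $b^2<4c$, and we still need $b^2\ge c$. For this case I would look for an earlier structural result of the paper on graphs with $\mult_{-1}$ large, e.g.\ a lemma relating $\mult_{-1}P_G$ to deletion of vertices. The idea is to use $P_G=P_{G-v}+xP_{G-N[v]}$ and induct, so that graphs with $m=n-2$ are built from pieces whose $Q$ factors are explicitly controlled. If that is unavailable, I would fall back on verifying $b^2\ge c$ directly from the first two nontrivial Newton-type inequalities $g_1^2\ge 2g_2$ and $g_{n-1}^2\ge g_ng_{n-2}\cdot(\text{const})$ expressed in terms of $b$ and $c$. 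The positivity $b>0$ should follow from $|V(G)|\ge n> m$.
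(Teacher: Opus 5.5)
Your reduction is the same as the paper's: factor $P_G=(1+x)^mQ$, use that a product of log-concave polynomials with positive coefficients is log-concave, and reduce to showing $b^2\ge c$ for $Q=1+bx+cx^2$. However, you never prove $b^2\ge c$, so there is a genuine gap. Moreover, the route you aim for cannot work.

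The inequality you single out as the main obstacle, $b\ge c$ (i.e.\ $|V(G)|-m\ge g_\alpha$), is false. For $G=K_3\sqcup K_3$ one has $\alpha(G)=2$ and $\mult_{-1}P_G=0=\alpha(G)-2$, with $Q=P_G=(1+3x)^2$. So $b=6<9=c$.

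Your discriminant split only settles the real-rooted case. In the nonreal case you defer to an unspecified structural lemma, or to $g_1^2\ge 2g_2$. The latter translates to $c\le (b^2+m)/2$, which does not give $c\le b^2$ once $m>b^2$. The other ``Newton-type'' inequality at the top degree is essentially part of what you are trying to prove, so it cannot be used.

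The missing idea is to compare the coefficient of $x^2$, at the bottom end rather than at $x^{\alpha-1}$ as you did, and to use the exact count of $g_2$ instead of $g_1^2\ge 2g_2$. Since $g_2=\binom{|V(G)|}{2}-|E(G)|$ counts non-edges and $|V(G)|=m+b$, matching $x^2$-coefficients in $(1+x)^m(1+bx+cx^2)$ gives
\[
\binom{m}{2}+mb+c=\binom{m+b}{2}-|E(G)|,
\]
that is, $c=\binom{b}{2}-|E(G)|\le\binom{b}{2}<b^2$. Together with $c=g_\alpha>0$ and $b=|V(G)|-\alpha(G)+2\ge 2$, this is exactly the content of Lemma~\ref{lem:almost-max-mult}. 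It finishes the proof with no case split on the discriminant.
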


Determining all pairs of $(\mult _{-1} P_G, \alpha(G))$ is the same as determining all pairs of dimensions and degrees of the $h$-polynomial of edge ideals, which are all pairs of degrees of the numerator and denominator of the corresponding Hilbert series written as a rational function (see Section~\ref{sec:algebra} for definitions). To that end, our final goal is to determine the following sets:
\begin{align*}
    \mathcal{MI}(n)&\coloneqq \{ (\mult_{-1} P_G, \alpha(G)) \mid \text{$G$ is a graph on $n$ vertices} \}, \\
    \mathcal{MI}^c(n)&\coloneqq \{ (\mult_{-1} P_G, \alpha(G)) \mid \text{$G$ is a connected graph on $n$ vertices} \},   \\
    \mathcal{MI}&\coloneqq \bigcup_{n=1}^{\infty} \mathcal{MI}(n), \quad \text{and} \quad \mathcal{MI}^c\coloneqq\bigcup_{n=1}^{\infty} \mathcal{MI}^c(n),
\end{align*}
for each $n\geq 1$. Except for $\mathcal{MI}^c(n)$, we have a complete description.

\begin{theorem}[\protect{Theorems~\ref{cor:MI-all-connected}} and \ref{thm:MI-n-all-graphs}]\label{thm:C}
    We have
    \begin{align*}
        \mathcal{MI}(n) &= \{(a,b)\in \mathbb{Z}_{\geq 0}^2\mid 0\leq a< b \leq n-1 \} \cup \{(n,n)\} \text{ for each $n\geq 1$},\\
        \mathcal{MI} &= \{(a,b)\in \mathbb{Z}_{\geq 0}^2\mid 0\leq a\leq b \} \setminus  \{(0,0)\},\\
        \mathcal{MI}^c &= \{(a,b)\in \mathbb{Z}_{\geq 0}^2\mid 0\leq a< b  \} \cup \{(1,1)\}.
    \end{align*}
\end{theorem}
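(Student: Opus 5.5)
The plan has three parts: show every realisable pair satisfies the stated constraints, then realise all admissible pairs using two explicit families of graphs, treating $\mathcal{MI}(n)$, $\mathcal{MI}$ and $\mathcal{MI}^c$ in that order.

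\textbf{Upper bounds.} Let $G$ have $n$ vertices, $m=\mult_{-1}P_G$ and $\alpha=\alpha(G)=\deg P_G$. Since $(1+x)^m$ divides $P_G$, we get $m\le \alpha\le n$.
\begin{itemize}
\item If $m=\alpha$, then $P_G=c(1+x)^\alpha$. Comparing constant terms gives $c=1$. Comparing linear coefficients gives $n=g_1=\alpha$. Hence $G$ is the edgeless graph $\overline{K_n}$, and the pair is $(n,n)$.
\item If $G$ has an edge, then $\alpha\le n-1$, and by the previous case $m<\alpha$.
\end{itemize}
This gives $\mathcal{MI}(n)\subseteq\{0\le a<b\le n-1\}\cup\{(n,n)\}$. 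For connected $G$, the only edgeless possibility is $K_1$, giving $(1,1)$, so $\mathcal{MI}^c\subseteq\{a<b\}\cup\{(1,1)\}$. The pair $(0,0)$ never occurs because $n\ge 1$ forces $\alpha\ge 1$.

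\textbf{Constructions for $\mathcal{MI}(n)$.} The basic building block is the complete split graph $S_{q,c}$: a clique $Q$ on $q\ge 1$ vertices joined completely to an independent set of $c\ge1$ vertices. Its independent sets are the subsets of the independent part together with the singletons of $Q$. Therefore
\[
P_{S_{q,c}}(x)=(1+x)^c+qx .
\]
This gives $P_{S_{q,c}}(-1)=-q\neq 0$, $\alpha(S_{q,c})=c$, and the graph is connected on $q+c$ vertices. So $S_{n-b,b}$ realises $(0,b)$ for connected graphs, for every $1\le b\le n-1$.

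Since $P$ is multiplicative over disjoint unions, take $a$ isolated vertices together with $S_{n-b,\,b-a}$. This graph has $n$ vertices, multiplicity exactly $a$, and independence number $a+(b-a)=b$. It is valid whenever $0\le a<b\le n-1$. Together with $\overline{K_n}$, this proves the formula for $\mathcal{MI}(n)$, and the formula for $\mathcal{MI}$ follows by taking the union over $n$.

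\textbf{Constructions for $\mathcal{MI}^c$ with $a\ge1$.} The expected obstacle is producing connected graphs with positive multiplicity, since isolated vertices are no longer available. I would use the corona $H\circ K_1$, obtained by attaching one pendant vertex to each vertex of a graph $H$ on $k$ vertices. Summing over independent sets $I$ of $H$, each vertex not in $I$ contributes a free choice of its pendant leaf, so
\[
P_{H\circ K_1}(x)=\sum_i h_i x^i(1+x)^{k-i}=(1+x)^{k-\alpha(H)}\,Q(x),
\]
where $Q(x)=\sum_i h_ix^i(1+x)^{\alpha(H)-i}$. Then $Q(-1)=h_{\alpha(H)}(-1)^{\alpha(H)}\neq0$. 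Hence $\mult_{-1}=k-\alpha(H)$ exactly, and $\alpha(H\circ K_1)=k$ because the $k$ leaves form an independent set and each of the $k$ edges to a leaf meets any independent set at most once.

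Given $1\le a<b$, choose a connected $H$ on $k=b$ vertices with $\alpha(H)=b-a$. Since $1\le b-a\le b-1$, one can take $H=S_{a,\,b-a}$. The corona is connected and realises $(a,b)$. Combined with the split graphs for $a=0$ and with $K_1$ for $(1,1)$, this proves the formula for $\mathcal{MI}^c$.
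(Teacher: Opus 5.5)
Your proof is correct. For $\mathcal{MI}^c$ it takes a genuinely different and more elementary route than the paper.

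\textbf{Upper bounds and $\mathcal{MI}(n)$.} The upper bounds agree in substance with the paper's: you reprove Lemma~\ref{lem:n-isolated-vertices} by comparing coefficients of $c(1+x)^{\alpha}$. For $\mathcal{MI}(n)$, your graph ($a$ isolated vertices plus the complete split graph $S_{n-b,b-a}$) is essentially what the paper's induction via Lemma~\ref{lem:add-1-isolated-vertex} unwinds to. The only difference is that you use one connected family of multiplicity-zero graphs in place of $K_n$, $S_n$ and $S_b\sqcup K_{n-b}$.

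\textbf{The inclusion $\supseteq$ for $\mathcal{MI}^c$.} This is where the routes really diverge. The paper obtains it (Corollary~\ref{cor:MI-all-connected}) from Theorem~\ref{thm:lower-bound-MI-c}. That theorem is an induction on $n$ resting on computer-checked data for $n\le 8$ (Figure~\ref{fig:MI}), the cone lemmas, Lemma~\ref{lem:a+1-and-b+1}, and the trees $T_{r,s}$, $T_r$ whose invariants are imported from \cite{biermann-et-al.2026}. You instead exhibit a single explicit witness, $\mathcal{W}(S_{a,b-a})$ on $2b$ vertices. Your sum over independent sets of $H$ is a self-contained proof of Lemma~\ref{lem:G*}, and it holds even without that lemma's edge hypothesis.

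\textbf{What each approach buys.} Your argument is shorter and needs neither computation nor external formulas. The paper's heavier machinery controls the number of vertices, giving a lower bound on $\mathcal{MI}^c(n)$ for each fixed $n$, which is the real aim of Section~\ref{sec:MI}. Your corona only places $(a,b)$ in $\mathcal{MI}^c(n)$ for $n\ge 2b$ (via Theorem~\ref{thm:MI-c-increase}). This is weaker than Theorem~\ref{thm:lower-bound-MI-c} when $b$ is large relative to $a$. On the other hand, your connected split graphs $S_{n-b,b}$ give an immediate proof of Corollary~\ref{cor:the-line-a=0}.
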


The set $\mathcal{MI}^c(n)$ is harder to determine, as the requirement that the graph be connected imposes additional restrictions on its invariants. We obtain bounds for $\mathcal{MI}^c(n)$ together with many realizable points on the boundary. The following is proved throughout Section~\ref{sec:MI}.

\begin{theorem}[\protect{Lemma~\ref{lem:MI-n-connected-upperbound} and Theorem~\ref{thm:lower-bound-MI-c}}]\label{thm:intro-bounds}
    Let $n\geq 2$. We have
    \[
    \mathcal{MI}^c(n)\subseteq \{(a,b)\in \mathbb{Z}_{\geq 0}^2 \mid 0\leq a <b \leq n-2\} \cup\{(0,n-1)\}.
    \]
    Moreover, if $n$ is odd, then
    \begin{equation*}
        \{(a,b)\in \mathbb{Z}_{\geq 0}^2\mid 0\leq a<b\leq n-2 \text{ and } a\leq \lceil n/2 \rceil-1 \} \setminus \{(\lceil n/2  \rceil-1,n-2)\}\cup \{(0,n-1)\}\subseteq \mathcal{MI}^c(n),
    \end{equation*}
    and if $n$ is even, then 
    \begin{equation*}
        \{(a,b)\in \mathbb{Z}_{\geq 0}^2\mid 0\leq a<b\leq n-2 \text{ and } a\leq n/2 -1 \} \cup \{(0,n-1)\}\subseteq \mathcal{MI}^c(n).
    \end{equation*}
\end{theorem}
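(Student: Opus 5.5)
The statement has two parts: an upper containment and, by parity of $n$, lower containments. I will prove them separately. Both rest on elementary facts about the independence polynomial. First, $P_{G\sqcup H}=P_GP_H$, so multiplicities add over disjoint unions. Second, $P_G=P_{G-v}+xP_{G-N[v]}$. Third, $P_{K_m}=1+mx$. Finally, $\mult_{-1}P_G\leq \deg P_G=\alpha(G)$, with equality only when $P_G=(1+x)^{\alpha}$; comparing linear coefficients then forces $n=\alpha$, so $G$ is edgeless.

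\textbf{Upper bound.} Let $G$ be connected on $n\geq 2$ vertices. Then $G$ has an edge, so $\alpha(G)\leq n-1$, with equality exactly when $G=K_{1,n-1}$. For the star, $P=(1+x)^{n-1}+x$ and $P(-1)=-1\neq 0$, which gives the point $(0,n-1)$. Otherwise $\alpha\leq n-2$. The edgeless case is excluded, so $P_G\neq(1+x)^\alpha$ and therefore $a<b$. This proves the containment.

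\textbf{Realizability.} I build connected graphs from pieces of known multiplicity.
\begin{enumerate}
\item Multiplicity $0$: the star $K_{1,b}$ with $b$ leaves, padded as needed. Adding a clique on the centre side (replacing the centre by $K_m$, all joined to the leaves) gives $P=(1+x)^b+mx$. This is nonzero at $-1$ and has $\alpha=b$ on $b+m$ vertices, so it realises $(0,b)$ for every $b\leq n-2$.
\item Multiplicity $a$: attach $a$ pendant paths $P_2$ to a core. A graph in which each vertex $v_i$ of a connected core $H$ gets a private pendant leaf $\ell_i$ (a corona) satisfies $P=(1+x)^{|H|}$. Attaching $a$ pendant leaves to $a$ vertices of a core therefore tends to contribute factors $(1+x)$. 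Concretely, I would prove by deletion--contraction at the leaf that if $v$ has a pendant leaf $\ell$, then
\[
P_G=(1+x)P_{G-v-\ell}+x\bigl(P_{G-N[v]}-P_{G-v-\ell-N(v)}\bigr).
\]
I would instead choose the core to be a clique $K_m$ in which $a$ of its vertices receive a pendant leaf and the remaining vertices are extra. Then $P=(1+x)^a+(m-a)x(1+x)^{a}\cdot$(correction). Computing directly, independent sets use at most one clique vertex, which gives
\[
P=(1+x)^a+ a\,x(1+x)^{a-1}+(m-a)x(1+x)^a.
\]
The multiplicity here is only $a-1$. Adjusting $m$ and adding further pendant leaves on unmatched clique vertices then tunes $\alpha$ independently of the multiplicity.
\end{enumerate}
So the plan is a family built from a clique with pendant leaves and pendant stars. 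Its polynomial is a sum of terms of the form $(1+x)^k\cdot$(linear), which lets me read off the multiplicity as the minimal $k$ whose coefficient is nonvanishing at $-1$, and $\alpha$ as the number of leaves. The vertex count is clique size plus leaves, and I would pad with extra clique vertices to reach exactly $n$ vertices. The constraint $a\leq\lceil n/2\rceil-1$ matches this: each unit of multiplicity costs a leaf plus a clique vertex, that is, two vertices.

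\textbf{Main obstacle.} The extremal points are the hardest step. I mean the points with $a$ near $n/2$ and $b$ near $n-2$, where the vertex budget is tight. In particular I need to show the excluded point $(\lceil n/2\rceil-1,n-2)$ is handled correctly for odd $n$, and to find an explicit even-$n$ construction attaining $(n/2-1,n-2)$. I would try candidate graphs (coronas of small cliques with one extra leaf, or a corona with one vertex removed) and verify them by hand via the recursion above. For the lower bounds I would rely on explicit case constructions rather than any structural theorem.
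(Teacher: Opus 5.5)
Your upper bound is correct and matches the paper's argument. Your multiplicity-$0$ graphs ($K_m$ joined to $b$ independent vertices, with $P=(1+x)^b+mx$) are also fine. The gap is in the rest of the realizability half. It is a plan rather than a proof: the ``pendant stars'' are never specified, and the extremal points are explicitly left open. Moreover, the one family you do analyze cannot finish the job.

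Redo the bookkeeping. Suppose vertex $i$ of $K_m$ carries $\ell_i\ge 0$ pendant leaves, and set $L=\sum_i\ell_i\ge 1$. Then $P=(1+x)^L+x\sum_i(1+x)^{L-\ell_i}$, so $\mult_{-1}P=L-\max_i\ell_i$. However, $\alpha=L$ only when every clique vertex has a leaf; otherwise $\alpha=L+1$. Your own formula shows this: for $m>a$ it has degree $a+1$, so that graph realizes $(a-1,a+1)$, not $(a-1,a)$. So ``$\alpha$ is the number of leaves'' is false, and padding with bare clique vertices is not neutral.

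Consequently, $b=a+1$ with $a\ge 1$ forces $\ell_i=1$ for all $i$. That is exactly $\mathcal{W}(K_{a+1})$, on exactly $2a+2$ vertices. Required points such as $(1,2)\in\mathcal{MI}^c(5)$ are therefore out of reach. So is $(3,4)\in\mathcal{MI}^c(7)$, which by Proposition~\ref{prop:diagonal-upper-bound} can only be realized by a tree.

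With the corrected count, your family does reach every required point with $b\ge a+2$; for example, $m=2$ gives the double stars $T_{r,s}$ on the line $b=n-2$. So the missing idea is precisely a way to move along the diagonal $b=a+1$. The paper supplies two:
\begin{enumerate}
\item Coning over an independent set $U$ with $|U|=a=\alpha(G)-1$ adds $x(1+x)^a$. This turns $(1+x)^a(1+cx)$ into $(1+x)^a(1+(c+1)x)$, padding a diagonal point one vertex at a time (Lemma~\ref{lem:diagonal-realization-subset}).
\item $\cone(G\sqcup K_1,U)$ with $|U|=\alpha(G)$ sends $(a,b)$ on $n$ vertices to $(a+1,b+1)$ on $n+2$ vertices (Lemma~\ref{lem:a+1-and-b+1}). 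Starting from the tree $T_1$, which realizes $(3,4)$ on $7$ vertices, this yields $(a,a+1)$ on $2a+1$ vertices.
\end{enumerate}
The paper combines these with the trees $T_{r,s}$ and $T_r$ in an induction $n-2\to n$, with base cases $n\le 8$.

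Two auxiliary claims of yours should also be dropped. First, a corona over a connected $H$ does not have $P=(1+x)^{|H|}$. Already $\mathcal{W}(K_2)$, the path on four vertices, has $P=(1+x)(1+3x)$; by Lemma~\ref{lem:G*} the multiplicity is $|V(H)|-\alpha(H)$. Second, your leaf identity degenerates to $P_G=(1+x)P_{G-v-\ell}$, because $G-v-\ell-N(v)=G-N[v]$. The correct identity is $P_G=(1+x)P_{G-v-\ell}+xP_{G-N[v]}$.
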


In fact, the lower bound for $\mathcal{MI}^c(n)$ in Theorem~\ref{thm:intro-bounds} is exactly the set $\mathcal{MI}^c(n)$ itself for $2\leq n\leq 8$. Whether this holds for larger $n$ reduces to Question~\ref{ques} and remains open.

Spectral theory is the study of eigenvalues of a matrix, i.e., the roots of the characteristic polynomial of a matrix. Spectral graph theory studies graph properties and invariants via the use of spectral theory. It is thus no surprise that techniques in spectral graph theory can produce interesting information on the multiplicity of graphs. In Section~\ref{sec:spectral}, we translate some results in spectral graph theory into our context, giving multiplicity of the line graph of forests.

Finally, we remark that $\mult_{-1}P_G$ is exactly negative one multiplied with the $\mathfrak{a}$-invariant of the edge ideal of $G$ \cite{biermann-et-al.2026}. In other words, every result we obtain in this article has an equivalent statement in combinatorial commutative algebra. We provide the translations for some, together with some applications, in Section~\ref{sec:algebra}. In particular, we obtain many results regarding possible pairs of $\mathfrak{a}$-invariants and dimension of edge ideals of graphs. These results align with the large (and growing) literature on constructing graphs with given parameters \cite{MR3805868,MR3758241,MR4508184,MR3518424,erdos1960graphs}, and of finding tuples of invariants of edge ideals \cite{MR4494587,MR4302169,MR4270555,MR3919621,MR4585898}.

\section*{Acknowledgements}

We thank Priyavrat Deshpande, Takayuki Hibi, Do Trong Hoang, Thiago Holleben, and Adam van Tuyl for helpful feedback on an earlier version of this article. The first named author is supported by ANRF National Postdoctoral Fellowship. The first, second, fourth, and sixth named authors are supported by the Infosys Foundation.

\section{Graph theory}

Let $G=(V(G),E(G))$ be a finite simple graph. For a vertex $v\in V(G)$, a vertex that forms an edge with $v$ in $G$ is called its \emph{neighbor} in $G$. The set of all neighbors of $v$ is denoted by $N_G(v)$, and $N_G[v]\coloneqq N_G(v)\cup \{v\}$ is called the \emph{closed neighborhood} of $v$. A vertex $v\in V(G)$ is called a \emph{pendant vertex} of $G$ if $|N_G(v)|=1$. The unique neighbor of a pendant vertex is called a \emph{support vertex}. Equivalently, a support vertex is a vertex whose neighborhood contains a pendant vertex. When $G$ is well understood from the context, we will drop the subscripts. For a set of vertices $U\subseteq V(G)$, let $G\setminus U$ denote the induced subgraph of $G$ with the vertex set $V(G)\setminus U$.

We recall the following classical result on computing independence polynomials.

\begin{lemma}[\protect{\cite{MR1263751}}]\label{lem:basic-identities}
    Let $G$ be a finite simple graph and $u\in V(G)$. Then \[
    P_G(x) = P_{G\setminus \{u\}}(x)+xP_{G\setminus N[u]}(x).\]
\end{lemma}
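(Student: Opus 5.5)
The identity follows from splitting the independent sets of $G$ according to whether they contain $u$, and comparing coefficients of $x^i$ on both sides. Write $g_i(H)$ for the number of independent sets of size $i$ in a graph $H$, with $g_0(H)=1$ (the empty set). It suffices to prove that for every $i\geq 0$,
\[
g_i(G)=g_i(G\setminus\{u\})+g_{i-1}(G\setminus N[u]),
\]
with the convention $g_{-1}=0$.

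\emph{Sets avoiding $u$.} An independent set $S$ of $G$ with $u\notin S$ is a subset of $V(G)\setminus\{u\}$. Since $G\setminus\{u\}$ is an induced subgraph, $S$ is independent in $G$ if and only if it is independent in $G\setminus\{u\}$. This gives a size-preserving bijection, so such sets are counted by $g_i(G\setminus\{u\})$.

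\emph{Sets containing $u$.} If $u\in S$ and $S$ is independent, then $S$ contains no neighbor of $u$. Hence $S\setminus\{u\}\subseteq V(G)\setminus N[u]$, and $S\setminus\{u\}$ is independent in the induced subgraph $G\setminus N[u]$. Conversely, let $S'$ be any independent set of $G\setminus N[u]$. Then $S'\cup\{u\}$ is independent in $G$: the set $S'$ has no internal edges, and $u$ has no neighbor in $S'$. The map $S\mapsto S\setminus\{u\}$ is therefore a bijection that lowers size by one, so these sets contribute $g_{i-1}(G\setminus N[u])$.

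Since every independent set either contains $u$ or does not, the two classes partition the independent sets of $G$. Adding the counts gives the coefficient identity, and summing over $i$ with weight $x^i$ yields $P_G(x)=P_{G\setminus\{u\}}(x)+xP_{G\setminus N[u]}(x)$.

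There is no real obstacle here. The only checks needed are the edge cases: $i=0$, where only the empty set occurs and it avoids $u$; and $N[u]=V(G)$, where $G\setminus N[u]$ is the empty graph and $P_{\emptyset}=1$ accounts for the single set $\{u\}$.
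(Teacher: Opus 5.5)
Your proof is correct. The paper gives no proof of this identity and simply cites it as a classical result; your argument, partitioning the independent sets by whether they contain $u$ and comparing coefficients, is the standard one behind it, and your edge cases (the empty set, and $N[u]=V(G)$ with $P_{\emptyset}=1$) are handled correctly.
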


We recall a common construction: cone over a set. Let $G$ be a finite simple graph on $[n]$ and $U\subseteq V(G)$ be a set of vertices of $G$. The \emph{cone of $G$ over $V(G)\setminus U$}, denoted by $\cone(G,U)$, is the graph on $[n+1]$ with
\[
E(\cone(G,U)) = E(G)\cup \{ \{i,n+1\} \mid i\notin U  \}.
\]
It is straightforward to obtain the independence polynomial of $\cone(G,U)$ when $U$ is an independent set.

\begin{lemma}\label{lem:cone-ind-poly}
    Let $G$ be a finite simple graph and $U$ an independent set of $G$. Then \[
    P_{\cone(G,U)}(x)=P_G(x)+x(1+x)^{|U|}.
    \]
    Consequently, we have the following:
    \begin{enumerate}
        \item if $\mult_{-1}P_G<|U|<\alpha(G)$, then
        \[
        \mult_{-1}P_{\cone(G,U)} = \mult_{-1} P_G \quad \text{and}\quad \alpha(\cone(G,U))=\alpha(G);
        \]
        \item if $|U|<\mult_{-1} P_G$, then
        \[
        \mult_{-1}P_{\cone(G,U)} = |U| \quad \text{and}\quad \alpha(\cone(G,U))=\alpha(G);
        \]
        \item if $|U|=\alpha(G)$ and $G$ has an edge, then
        \[
        \mult_{-1}P_{\cone(G,U)} = \mult_{-1} P_G \quad \text{and}\quad \alpha(\cone(G,U))=\alpha(G)+1.
        \]
    \end{enumerate}
\end{lemma}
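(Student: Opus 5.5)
The identity comes from splitting the independent sets of $\cone(G,U)$ according to whether they contain the apex $n+1$. Those avoiding $n+1$ are exactly the independent sets of $G$, which gives $P_G(x)$. Those containing $n+1$ are $\{n+1\}\cup S$, where $S$ is independent in $G$ and contains no neighbor of $n+1$, i.e.\ $S\subseteq U$. Since $U$ is independent, every subset of $U$ qualifies, contributing $x\sum_{S\subseteq U}x^{|S|}=x(1+x)^{|U|}$. Equivalently, apply Lemma~\ref{lem:basic-identities} with $u=n+1$: we have $\cone(G,U)\setminus\{u\}=G$, and $\cone(G,U)\setminus N[u]$ is the edgeless graph on $U$, whose independence polynomial is $(1+x)^{|U|}$.

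For the consequences, write $m=\mult_{-1}P_G$, so $P_G=(1+x)^mQ$ with $Q(-1)\neq 0$. Then
\[
P_{\cone(G,U)}=(1+x)^{\min(m,|U|)}R
\]
and we check whether $R(-1)\neq 0$ in each case.

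In case (1), $m<|U|$, so $R=Q+x(1+x)^{|U|-m}$ and $R(-1)=Q(-1)\neq0$. The multiplicity therefore stays $m$. The degree of $x(1+x)^{|U|}$ is $|U|+1\le\alpha(G)$, and all coefficients are positive, so no cancellation occurs and the degree stays $\alpha(G)$.

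In case (2), $|U|<m$, so $R=(1+x)^{m-|U|}Q+x$ and $R(-1)=-1\neq0$. The multiplicity is $|U|$. Again $|U|+1\le m\le \alpha(G)$, since $m\le\deg P_G$, so $\alpha$ is unchanged.

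In case (3), $|U|=\alpha(G)$ and the degree becomes $\alpha(G)+1$, because the new term has the top degree and positive coefficients. This also follows directly, since $U\cup\{n+1\}$ is independent. For the multiplicity we need $m<|U|$, and then the computation of case (1) gives multiplicity $m$.

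The only point needing care is showing $m<\alpha(G)$ when $G$ has an edge. Suppose instead $m=\alpha(G)$. Then $P_G=(1+x)^{\alpha}$, since the constant term is $1$. Comparing linear coefficients gives $|V(G)|=\alpha$, which forces $G$ to be edgeless, a contradiction. Hence $m<\alpha(G)=|U|$, and case (3) follows.
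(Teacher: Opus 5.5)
Your proof is correct and follows the paper's approach. The identity comes from Lemma~\ref{lem:basic-identities} applied at the apex, whose deleted closed neighborhood leaves the edgeless graph on $U$. The three consequences are the routine checks, factoring out a power of $(1+x)$, that the paper leaves as ``straightforward''; you carry them out correctly, including the needed fact that $\mult_{-1}P_G<\alpha(G)$ whenever $G$ has an edge (cf.\ Lemma~\ref{lem:n-isolated-vertices}).
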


\begin{proof}
    Assume that $V(\cone(G,U))= V(G)\cup \{u\}$. Then the independence polynomial formula follows from Lemma~\ref{lem:basic-identities}, remarking that $\cone(G,U)\setminus N_{\cone(G,U)}[u]=U$ is an independent set of size $|U|$. The second statement then follows straightforwardly.
\end{proof}

For two graphs $G$ and $H$, their \emph{union}, denoted by $G\sqcup H$, is the graph obtained by merging the vertex and edge sets, i.e., 
\[
V(G\sqcup H) = V(G)\sqcup V(H) \quad\text{and} \quad E(G\sqcup H) = E(G)\sqcup E(H).
\]
Here we use disjoint unions in the definition to emphasize that $V(G)$ and $V(H)$ are considered disjoint in this construction. The following result is standard, hence we do not provide a~proof.

\begin{lemma}\label{lem:union-graphs}
    Let $G$ and $H$ be two finite simple graphs. Then $P_{G\sqcup H}(x)=P_G(x)P_H(x)$. In particular, we have
    \[
    \mult_{-1} P_{G\sqcup H}= \mult_{-1}P_G + \mult_{-1} P_H \quad \text{and}\quad \alpha(G\sqcup H)=\alpha(G)+\alpha(H).
    \]
\end{lemma}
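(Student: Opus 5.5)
The plan is to start from the multiplicativity of independence polynomials over disjoint unions and then read off both invariants directly.

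First, I will show $P_{G\sqcup H}(x)=P_G(x)P_H(x)$ with a bijection. Because no edge joins $V(G)$ to $V(H)$, a set $S\subseteq V(G)\sqcup V(H)$ is independent in $G\sqcup H$ if and only if $S\cap V(G)$ is independent in $G$ and $S\cap V(H)$ is independent in $H$. The map $S\mapsto (S\cap V(G),S\cap V(H))$ is therefore a bijection from independent sets of $G\sqcup H$ to pairs of independent sets of $G$ and $H$, and it is size-additive: $|S|=|S\cap V(G)|+|S\cap V(H)|$. Counting independent sets of size $k$ gives
\[
g_k(G\sqcup H)=\sum_{i+j=k} g_i(G)\,g_j(H),
\]
which is precisely the coefficient identity for the product $P_G(x)P_H(x)$. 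The empty set corresponds to the pair of empty sets, so $g_0=1$ is consistent on both sides.

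Second, I will derive the two consequences.

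For the multiplicity, write $P_G(x)=(1+x)^a Q(x)$ and $P_H(x)=(1+x)^b R(x)$, where $a=\mult_{-1}P_G$, $b=\mult_{-1}P_H$, and $Q(-1)\neq 0\neq R(-1)$. Then $P_{G\sqcup H}(x)=(1+x)^{a+b}Q(x)R(x)$. Since $\mathbb{Z}$ (or $\mathbb{R}$) has no zero divisors, $Q(-1)R(-1)\neq 0$, so the multiplicity is exactly $a+b$. Both polynomials are nonzero because their constant terms equal $1$, so the multiplicities are well defined.

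For the independence number, recall that $\alpha(G)=\deg P_G$ and that the leading coefficients are positive integers. The product of the leading coefficients is therefore nonzero, and $\deg(P_GP_H)=\deg P_G+\deg P_H$, which gives $\alpha(G\sqcup H)=\alpha(G)+\alpha(H)$.

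I do not expect a real obstacle anywhere in this argument. The only point that needs care is the bijection step, namely noting that independence in the disjoint union decouples across the two components because $E(G\sqcup H)$ contains no edges between $V(G)$ and $V(H)$.
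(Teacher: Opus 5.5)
Your proof is correct and complete. The size-additive bijection $S\mapsto (S\cap V(G),S\cap V(H))$ gives the product formula, and both the multiplicity and degree statements follow because $\mathbb{Z}[x]$ has no zero divisors. The paper omits the proof as standard, and your argument is the standard one it has in mind.
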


We recall some common graphs. Let $n\geq 1$ be an integer. The \emph{star graph} and \emph{complete graph} on $[n]$, denoted by $S_n$ and $K_n$, respectively, are the graphs with the edge sets:
\[
E(S_n)=\{\{i,n\}\colon 1\leq i\leq n-1\} \quad \text{and}\quad E(K_n)= \{\{i,j\}\colon 1\leq i<j\leq n \}.
\]
These two graphs, together with the graph of $n$ isolated vertices, can be fully characterized using independence polynomials. The next three results are standard, and we leave them as exercises to interested readers.

\begin{lemma}\label{lem:n-isolated-vertices}
    Let $G$ be a graph on $n\geq 1$ vertices. The following are equivalent:
    \begin{enumerate}
        \item $G$ is $\sqcup_{k=1}^n K_1$, the graph of $n$ isolated vertices;
        \item $\alpha(G)=n$;
        \item $\mult_{-1} P_G=\alpha(G)$.
    \end{enumerate}
\end{lemma}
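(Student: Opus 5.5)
We prove the cycle of implications $(1)\Rightarrow(3)\Rightarrow(2)\Rightarrow(1)$.

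For $(1)\Rightarrow(3)$: if $G=\sqcup_{k=1}^n K_1$, then every subset of $V(G)$ is independent, so $P_G(x)=(1+x)^n$. (Equivalently, apply Lemma~\ref{lem:union-graphs} with $P_{K_1}=1+x$.) Hence $\mult_{-1}P_G=n=\deg P_G=\alpha(G)$.

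For $(3)\Rightarrow(2)$: since $g_0=1\neq 0$, $P_G$ is a nonzero polynomial of degree $\alpha(G)$. If $\mult_{-1}P_G=\alpha(G)$, then $(1+x)^{\alpha(G)}$ divides $P_G$ and has the same degree. Therefore $P_G=c(1+x)^{\alpha(G)}$, and comparing constant terms gives $c=1$. Write $m=\alpha(G)$. Comparing linear coefficients gives $g_1=m$. But $g_1$ is the number of vertices, so $g_1=n$, and thus $m=n$, i.e.\ $\alpha(G)=n$.

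For $(2)\Rightarrow(1)$: if $\alpha(G)=n$, then $V(G)$ itself is independent, so $G$ has no edges and is $n$ isolated vertices.

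The only step needing care is $(3)\Rightarrow(2)$. There one must use that $P_G$ is nonzero with constant term $1$, so that the divisibility pins down $P_G$ exactly and the linear coefficient can be read off. The other implications are immediate.
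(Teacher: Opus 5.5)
Your proof is correct and complete. The paper gives no proof of this lemma; it calls it standard and leaves it as an exercise. Your cycle $(1)\Rightarrow(3)\Rightarrow(2)\Rightarrow(1)$ is the natural argument, and the key step $(3)\Rightarrow(2)$ uses the same coefficient comparison the paper uses in its proof of Lemma~\ref{lem:max-mult}: $P_G=(1+x)^{\alpha(G)}$ forces $g_1=\alpha(G)$, while $g_1=n$.
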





\begin{lemma}\label{lem:complete-graph}
    Let $G$ be a graph on $n\geq 1$ vertices. The following are equivalent:
    \begin{enumerate}
        \item $G$ is $K_n$, the complete graph on $n$ vertices;
        \item $\alpha(G)=1$;
        \item $P_G(x)=1+nx$.
    \end{enumerate}
\end{lemma}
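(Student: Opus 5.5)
We prove the cycle of implications (1) $\Rightarrow$ (3) $\Rightarrow$ (2) $\Rightarrow$ (1); each step is a short counting argument on independent sets.

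For (1) $\Rightarrow$ (3), suppose $G=K_n$. Any two distinct vertices are adjacent, so the only independent sets are the empty set and the $n$ singletons. Hence $g_0=1$, $g_1=n$, and $g_i=0$ for $i\geq 2$, which gives $P_G(x)=1+nx$.

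For (3) $\Rightarrow$ (2), recall that $\alpha(G)=\deg P_G$. If $P_G(x)=1+nx$ with $n\geq 1$, then $\deg P_G=1$, so $\alpha(G)=1$.

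For (2) $\Rightarrow$ (1), suppose $\alpha(G)=1$. Then no independent set has size $2$, so every pair of distinct vertices $i,j$ forms an edge. Thus $E(G)=\{\{i,j\}\colon 1\leq i<j\leq n\}$, i.e.\ $G=K_n$.

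No step is a real obstacle. The only subtlety is that (3) $\Rightarrow$ (2) needs $n\geq 1$, so that the linear coefficient is nonzero and the degree is exactly $1$; this is guaranteed by the hypothesis. Alternatively, (3) $\Rightarrow$ (1) can be seen directly: the vanishing coefficient $g_2=0$ says there are no non-adjacent pairs.
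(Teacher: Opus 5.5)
Your proof is correct and complete, including the remark that $n\geq 1$ is what makes $\deg P_G=1$ in (3)$\Rightarrow$(2). The paper gives no proof and calls this lemma a standard exercise; your direct counting of independent sets of size $0$, $1$ and $2$ is the natural argument it has in mind.
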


\begin{lemma}\label{lem:star-graph}
    Let $G$ be a connected graph on $n\geq 2$ vertices. The following are equivalent:
    \begin{enumerate}
        \item $G$ is $S_n$, the star graph on $n$ vertices;
        \item $\alpha(G)=n-1$;
        \item $P_{G}(x)=(1+x)^{n-1}+x$.
    \end{enumerate}
\end{lemma}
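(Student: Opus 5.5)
We prove the cycle of implications $(1)\Rightarrow(3)\Rightarrow(2)\Rightarrow(1)$ for the star $S_n$ on $n\geq 2$ vertices.

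\emph{Step $(1)\Rightarrow(3)$.} Apply Lemma~\ref{lem:basic-identities} to the center $u=n$ of $S_n$. Deleting $u$ leaves the $n-1$ isolated vertices $1,\dots,n-1$, whose independence polynomial is $(1+x)^{n-1}$ by Lemma~\ref{lem:union-graphs}. Since $N[u]$ is the whole vertex set, $S_n\setminus N[u]$ is the empty graph, which contributes $1$. Hence
\[
P_{S_n}(x)=(1+x)^{n-1}+x.
\]

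\emph{Step $(3)\Rightarrow(2)$.} The degree of $(1+x)^{n-1}+x$ is $n-1$. This holds also when $n=2$, where the polynomial is $1+2x$. Since $\alpha(G)=\deg P_G$, we get $\alpha(G)=n-1$.

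\emph{Step $(2)\Rightarrow(1)$.} This is the main step, and it is where connectedness is used. Let $I$ be an independent set of size $n-1$, and let $v$ be the unique vertex outside $I$.
\begin{itemize}[label={}]
\item Every edge of $G$ has at most one endpoint in $I$, so every edge must contain $v$.
\item Because $G$ is connected and $n\geq 2$, each vertex $w\in I$ has at least one neighbor. That neighbor can only be $v$.
\end{itemize}
Therefore $E(G)=\{\{w,v\}: w\in I\}$, so $G\cong S_n$.

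The argument is elementary; the only delicate point is the last step. Without connectedness, $(2)$ would fail to force $(1)$: for example, $K_2\sqcup K_1$ with $n=3$ has $\alpha=2=n-1$ but is not a star.
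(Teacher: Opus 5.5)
Your proof is correct and complete. The paper gives no proof of this lemma; it calls it standard and leaves it as an exercise. Your cycle $(1)\Rightarrow(3)\Rightarrow(2)\Rightarrow(1)$ is the natural argument: expand via Lemma~\ref{lem:basic-identities} at the center, read off the degree, and observe that every edge must meet the unique vertex outside an independent set of size $n-1$, with connectedness forcing all those edges to be present.
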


\section{Pseudo-forests with positive multiplicity}\label{sec:pseudo-forests}

The goal of this section is twofold. The first is to establish an algorithmic method to determine the value of $P_G(-1)$, where $G$ is a pseudo-forest.  The second objective is to provide an explicit description of trees $T$ with $P_T(-1)=0$. We start with a key lemma.

\begin{lemma}[\protect{\cite[Lemma~2.1]{MR3027601}}] \label{LevitMandres09}
    Let $G$ be a finite simple graph and $v$ be a support vertex of $G$.~Then
    \[
    P_G(-1)=(-1)P_{G\setminus N[v]}(-1).
    \]
\end{lemma}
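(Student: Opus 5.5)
Let $v$ be a support vertex of $G$ and fix a pendant vertex $w$ whose unique neighbor is $v$. The plan is to apply the deletion--contraction identity of Lemma~\ref{lem:basic-identities} twice: first at $v$, then at $w$ in the resulting graph. At $x=-1$ every term except the one involving $G\setminus N[v]$ should cancel.

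First apply Lemma~\ref{lem:basic-identities} at the vertex $v$ and evaluate at $x=-1$:
\[
P_G(-1)=P_{G\setminus\{v\}}(-1)-P_{G\setminus N[v]}(-1).
\]
It therefore suffices to show that $P_{G\setminus\{v\}}(-1)=0$. In $H\coloneqq G\setminus\{v\}$ the vertex $w$ has lost its only neighbor, so it is isolated. Hence $H=(H\setminus\{w\})\sqcup K_1$.

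Next, by Lemma~\ref{lem:union-graphs},
\[
P_H(x)=P_{H\setminus\{w\}}(x)\,(1+x),
\]
and this vanishes at $x=-1$. Equivalently, applying Lemma~\ref{lem:basic-identities} at $w$ in $H$ gives $P_H(x)=P_{H\setminus\{w\}}(x)+xP_{H\setminus\{w\}}(x)$, since $N_H[w]=\{w\}$. Substituting $P_{G\setminus\{v\}}(-1)=0$ into the first display yields $P_G(-1)=(-1)P_{G\setminus N[v]}(-1)$, as claimed.

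There is no real obstacle here. The only point to check is the observation that deleting the support vertex isolates its pendant neighbor; everything else is the two identities already recorded in the paper.
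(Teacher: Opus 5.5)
Your proof is correct and complete. The paper gives no proof of its own and simply cites Levit--Mandrescu; your argument (expand at $v$ with Lemma~\ref{lem:basic-identities}, then observe that deleting $v$ isolates its pendant neighbor $w$, so $P_{G\setminus\{v\}}$ acquires a factor $(1+x)$ that vanishes at $-1$) is the standard proof of that cited result.
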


\begin{definition}
    For a finite simple graph $G$, a \emph{support sequence} of $G$ is a sequence of vertices $v_1,\dots, v_t$ such that $v_i$ is a support vertex of $G\setminus \cup_{j=1}^{i-1} N[v_j]$ for any $i\in [t]$.
\end{definition}

By definition, a support sequence $v_1,\dots, v_t$ of $G$ is maximal if and only if $G\setminus \cup_{i=1}^{t} N_{G}[v_i]$ has no support vertex if and only if $G\setminus \cup_{i=1}^{t} N_{G}[v_i]$ has no pendant vertex. It is noteworthy that two maximal support sequences may have different lengths, and the resulting graph $G\setminus \cup_{i=1}^{t} N_{G}[v_i]$ is dependent on the sequence.

\begin{example}
    Let $G$ be the following tree.
    \begin{figure}[h]
        \centering
        \begin{tikzpicture}
             \coordinate (v1) at (0,0);
		\coordinate (v2) at (1,0);
		\coordinate (v3) at (2,0);
		\coordinate (v4) at (3,0);
		\coordinate (v5) at (4,0);
		\coordinate (v6) at (2,1);
        
\node[below] at (v1)  {$v_1$};
\node[below] at (v2)  {$v_2$};
\node[below] at (v3) {$v_3$};
\node[below] at (v4) {$v_4$};
\node[below] at (v5) {$v_5$};
\node[right] at (v6) {$v_6$};

\fill (v1) circle (1.5pt);
		\fill (v2) circle (1.5pt);
		\fill (v3) circle (1.5pt);
		\fill (v4) circle (1.5pt);
		\fill (v5) circle (1.5pt);
		\fill (v6) circle (1.5pt);

        \draw (v1)--(v2)--(v3)--(v4)--(v5);
        \draw (v3)--(v6);
\end{tikzpicture}
        \caption{A graph $G$ with two maximal support sequences of different lengths.}
        \label{fig:example-branch-sequence}
    \end{figure}\\
    It is straightforward that $v_3$ and $v_2,v_4$ are both maximal support sequences of $G$. Moreover, we have $G\setminus N_G[v_3]=K_1\sqcup K_1$ and $G\setminus (N_G[v_2]\cup N_G[v_4])=K_1$.
\end{example}

Recall that a \emph{pseudo-forest} is a graph $G$ such that any connected component of $G$ has at most one cycle. We shall compute the value of $P_G(-1)$ for any pseudo-forest $G$.

\begin{theorem}\label{thm:branch-algorithm}
    Let $G$ be a pseudo-forest and $v_1,\dots, v_t$ a maximal support sequence of $G$ for some integer $t$. Then $G \setminus \cup_{i=1}^t N_G[v_i]$ is a union of cycles and isolated vertices, and 
    \[
    P_G(-1)=(-1)^t P_{G \setminus \cup_{i=1}^t N_G[v_i]} (-1).
    \]
    Moreover, set $H=G \setminus \cup_{i=1}^t N_G[v_i]$, and
    \begin{align*}
        a&\coloneqq \# \text{$k$-cycles of $H$ where $k=0$ (mod $6$)},\\
        b&\coloneqq \# \text{$k$-cycles of $H$ where $k=2,4$ (mod $6$)},\\
        c&\coloneqq \# \text{$k$-cycles of $H$ where $k=3$ (mod $6$)},\\
        d&\coloneqq \# \text{isolated vertices of $H$}.
    \end{align*}
    Then 
    \[
    P_G(-1)= \begin{cases}
        0 & \text{if } d>0,\\
        (-1)^{t+b+c} (2)^{a+c}  & \text{if } d=0.
    \end{cases}
    \]
\end{theorem}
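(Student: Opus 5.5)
The proof will proceed in three steps: a structural statement about $H$, the reduction formula, and the evaluation of $P_H(-1)$.

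\emph{Structure of $H$.} Every induced subgraph of a pseudo-forest is again a pseudo-forest, since an induced subgraph of a component with at most one cycle has components with at most one cycle. So $H$ is a pseudo-forest. By maximality of the support sequence, $H$ has no pendant vertex, i.e., every vertex of $H$ has degree $0$ or at least $2$. Now take a component $C$ of $H$ with at least two vertices, so $C$ has $|C|$ vertices and minimum degree at least $2$. Then $|E(C)|\geq |V(C)|$. Since $C$ is a connected pseudo-forest, it has at most one cycle, so $|E(C)|\leq |V(C)|$. Hence $|E(C)|=|V(C)|$ and every vertex has degree exactly $2$, which makes $C$ a cycle. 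Therefore $H$ is a disjoint union of cycles and isolated vertices.

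\emph{The reduction formula.} Set $G_i\coloneqq G\setminus \cup_{j=1}^{i} N_G[v_j]$, with $G_0=G$. Because the vertices $v_1,\dots,v_i$ lie outside later graphs, $N_{G_{i-1}}[v_i]=N_G[v_i]\cap V(G_{i-1})$. It follows that $G_{i-1}\setminus N_{G_{i-1}}[v_i]=G_i$. Since $v_i$ is a support vertex of $G_{i-1}$, Lemma~\ref{LevitMandres09} gives $P_{G_{i-1}}(-1)=-P_{G_i}(-1)$. Inducting on $i$ yields $P_G(-1)=(-1)^tP_H(-1)$.

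\emph{Evaluating $P_H(-1)$.} By Lemma~\ref{lem:union-graphs}, $P_H(-1)$ is the product of $P_C(-1)$ over the components $C$ of $H$. An isolated vertex contributes $1+x$, which vanishes at $-1$, so $P_G(-1)=0$ whenever $d>0$. When $d=0$ we need $P_{C_k}(-1)$ for the cycle $C_k$ with $k\geq 3$, and the claim to prove is:
\[
P_{C_k}(-1)=2 \text{ if } k\equiv 0,\quad -1 \text{ if } k\equiv 1,5,\quad -2 \text{ if } k\equiv 3,\quad 1 \text{ if } k\equiv 2,4 \pmod 6.
\]
This follows the table of $-\tilde\chi$ of cycle independence complexes. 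To compute it, apply Lemma~\ref{lem:basic-identities} at a vertex $u$ of $C_k$: this gives $P_{C_k}=P_{P_{k-1}}+xP_{P_{k-3}}$, where $P_m$ is the path on $m$ vertices. Applying Lemma~\ref{lem:basic-identities} at an end vertex gives $p_m\coloneqq P_{P_m}(-1)$ satisfying $p_m=p_{m-1}-p_{m-2}$, with $p_0=1$ and $p_1=0$. This sequence is periodic with period $6$: $1,0,-1,-1,0,1,\dots$. Then $P_{C_k}(-1)=p_{k-1}-p_{k-3}$, which we evaluate mod $6$.

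This computation is the main obstacle, and it is also where a discrepancy with the stated formula appears. Our computation gives $P_{C_k}(-1)=-1$ for $k\equiv 1,5 \pmod 6$, yet the statement does not count these cycles. Such cycles contribute sign factors, so they cannot simply be dropped. One should check whether they can actually occur in $H$: a cycle of $H$ is the unique cycle of its component in $G$, so nothing in the structure rules them out. If they can occur, the formula needs an additional sign $(-1)^{e}$, where $e$ is the number of cycles of $H$ of length $1,5 \pmod 6$. The signs for $b$ and $c$ also need checking: our table gives $+1$ for $k\equiv 2,4$ and $-2$ for $k\equiv 3$. So I would recompute these base values carefully, for instance $C_3=K_3$ gives $1-3=-2$ and $C_4$ gives $1-4+2=-1$, and adjust the sign bookkeeping to match. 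Once the per-cycle values are fixed, multiplying them together with the factor $(-1)^t$ gives the final case formula.
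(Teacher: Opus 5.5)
You prove the first two parts correctly: the degree-count argument that $H$ is a union of cycles and isolated vertices, and the telescoping of Lemma~\ref{LevitMandres09} along $G_0,G_1,\dots,G_t=H$. This is more explicit than the paper, which only asserts the structure of $H$ and cites $P_{C_k}(-1)$ from \cite{modularity-thesis}. The identity $N_{G_{i-1}}[v_i]=N_G[v_i]\cap V(G_{i-1})$ holds because $G_{i-1}$ is an induced subgraph of $G$ containing $v_i$, not because earlier $v_j$ were removed, but your conclusion there is fine.

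The genuine gap is the last step. Your table for $P_{C_k}(-1)$ is wrong, so you never derive the stated case formula and instead report a discrepancy that does not exist. Your own recurrence gives the correct values if you read them off carefully. With $p_0,\dots,p_5=1,0,-1,-1,0,1$ (period $6$) and $P_{C_k}(-1)=p_{k-1}-p_{k-3}$, the values modulo $6$ are:
\begin{itemize}
\item $k\equiv 0$: $p_5-p_3=2$;
\item $k\equiv 1$: $p_0-p_4=1$;
\item $k\equiv 2$: $p_1-p_5=-1$;
\item $k\equiv 3$: $p_2-p_0=-2$;
\item $k\equiv 4$: $p_3-p_1=-1$;
\item $k\equiv 5$: $p_4-p_2=1$.
\end{itemize}
Direct checks confirm this: $P_{C_5}(-1)=1-5+5=1$ and $P_{C_7}(-1)=1-7+14-7=1$. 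You yourself computed $P_{C_4}(-1)=1-4+2=-1$, which already contradicts your table. Your table also fails on the simplest case $G=C_5$ with $t=0$: it would give $P_G(-1)=-1$, but the true value is $1$.

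So cycles of length $\equiv 1,5 \pmod 6$ contribute the factor $1$ and can be ignored. Each cycle of length $\equiv 2,4$ contributes $-1$, each of length $\equiv 3$ contributes $-2$, and each of length $\equiv 0$ contributes $2$. Multiplying gives $P_H(-1)=(-1)^{b+c}2^{a+c}$ when $d=0$, hence $P_G(-1)=(-1)^{t+b+c}2^{a+c}$, exactly as stated. No extra sign $(-1)^e$ is needed.

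As written, the proposal stops at ``recompute and adjust'' and casts doubt on a correct statement. Correcting the cycle table completes the proof, and your path-recurrence derivation then gives a self-contained replacement for the paper's citation.
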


\begin{proof}
    Since $G$ is a pseudo-forest, the only subgraphs of $G$ that do not have any pendant vertex are unions of cycles and isolated vertices. The first statement then follows from the fact that $G \setminus \cup_{i=1}^t N_G[v_i]$ has no pendant vertex, and Lemma~\ref{LevitMandres09}. The second statement comes from Lemma~\ref{lem:union-graphs}, and the formula 
    \[
    P_{C_n}(-1)= \begin{cases}
        2 & \text{if } n=0 \text{ (mod 6)}\\
        1 & \text{if } n=1,5 \text{ (mod 6)}\\
        -1 & \text{if } n=2,4 \text{ (mod 6)}\\
        -2 & \text{if } n=3 \text{ (mod 6)}
    \end{cases} \tag*{(\cite[Lemma 3.2.2]{modularity-thesis}).\qedhere}
    \]
\end{proof}

We obtain a quick corollary about the independence complex of $G$ where $G$ is a special class of pseudo-forests.

\begin{corollary}\label{cor:ternary-pseudo-forest}
    Let $G$ be a ternary pseudo-forest, i.e., a pseudo-forest that does not have any cycle of length divisible by 3. Then the independence complex of $G$ is contractible if and only if for any maximal support sequence $v_1,\dots, v_t$ of $G$, the graph $G \setminus \cup_{i=1}^t N_G[v_i]$ has an isolated~vertex.
\end{corollary}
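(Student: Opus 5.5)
The plan is to combine Theorem~\ref{thm:branch-algorithm} with the ternary characterization recalled in the introduction. First I will record that every induced subgraph of a ternary pseudo-forest is again a ternary graph: an induced cycle in an induced subgraph is an induced cycle of $G$, and by hypothesis its length is not divisible by $3$. So $G$ is ternary in the sense of the introduction. By Kim's theorem \cite{MR4403044}, $\ind(G)$ is then either contractible or homotopy equivalent to a sphere. The latter case gives $P_G(-1)=-\tilde{\chi}(\ind(G))=\pm1\neq 0$, while the former gives $P_G(-1)=0$. Hence $\ind(G)$ is contractible if and only if $P_G(-1)=0$, which is the equivalence already noted in the introduction.

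It remains to show that, for a pseudo-forest $G$ with no cycle of length divisible by $3$, $P_G(-1)=0$ holds if and only if $H=G\setminus\cup_{i=1}^t N_G[v_i]$ has an isolated vertex, for any maximal support sequence $v_1,\dots,v_t$. Fix such a sequence. By Theorem~\ref{thm:branch-algorithm}, $H$ is a union of cycles and isolated vertices, and every cycle of $H$ is a cycle of $G$. Since $G$ is ternary, the counts $a$ and $c$ of that theorem vanish. The formula then gives $P_G(-1)=0$ when $d>0$, and $P_G(-1)=(-1)^{t+b}\neq 0$ when $d=0$. Therefore $P_G(-1)=0$ if and only if $d>0$.

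The value $P_G(-1)$ does not depend on the chosen sequence. So if this condition holds for one maximal support sequence, it holds for all of them. This gives the ``for any'' formulation, and it also covers the degenerate case $t=0$, where $H=G$.

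The only delicate point is the use of Kim's theorem, which requires the topological dichotomy for the whole graph $G$ and not merely for its induced subgraphs. This is fine, because $G$ is an induced subgraph of itself. I also need to make sure that ``cycle'' in the hypothesis covers induced cycles. In a pseudo-forest every cycle is induced: a chord would create a second cycle in the same component. So the two readings of ternary coincide.
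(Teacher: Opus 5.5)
Your proof is correct and follows the argument the paper intends for this ``quick corollary'': combine Theorem~\ref{thm:branch-algorithm} with the remark in the introduction that, for ternary $G$, Kim's theorem makes $\ind(G)$ contractible if and only if $P_G(-1)=0$. One small remark: you do not actually need $a=c=0$, since $(-1)^{t+b+c}2^{a+c}\neq 0$ for every pseudo-forest, so the ternary hypothesis is used only in the topological step.
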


Next, we characterize trees whose independence polynomial vanishes at $-1$. For this, we introduce some terminology.

\begin{definition}
Let $T$ be a tree. The \emph{$3$-subdivision} of $T$, denoted by $T^*$, is the tree obtained from $T$ by replacing each edge $e=\{u,v\}$ with a path of length $3$. More precisely, for each edge $e=\{u,v\}$, introduce two new vertices $w_{u,e}$ and $w_{v,e}$, and replace $e$ with the edges $\{u,w_{u,e}\}$, $\{w_{u,e},w_{v,e}\}$, and $\{w_{v,e},v\}$.
\end{definition}

We present an example in Figure~\ref{fig:T*} to illuminate the concept.

\begin{figure}[h]
    \centering
    \begin{tabular}{cccc}
        \begin{tikzpicture}[scale=1.3]
		\coordinate (v2) at (0,0);
		\coordinate (v1) at (0,1);
		\coordinate (v3) at (-1,-1);
		\coordinate (v4) at (1,-1);
		\coordinate (v5) at (0,-2);
		\coordinate (v6) at (2,-2);
		
		\draw (v2) -- (v1);
		\draw (v2) -- (v3);
		\draw (v2) -- (v4);
		\draw (v4) -- (v5);
		\draw (v4) -- (v6);

		\fill (v1) circle (1.5pt);
		\fill (v2) circle (1.5pt);
		\fill (v3) circle (1.5pt);
		\fill (v4) circle (1.5pt);
		\fill (v5) circle (1.5pt);
		\fill (v6) circle (1.5pt);

		\node[above] at (v1) {1};
		\node[right] at (v2) {2};
		\node[left] at (v3) {3};
		\node[above right] at (v4) {4};
		\node[below] at (v5) {5};
		\node[right] at (v6) {6};
	\end{tikzpicture} &&&  \begin{tikzpicture}	
			\coordinate (a1) at (0,1.5);
			\coordinate (a2) at (0,1);
			\coordinate (a3) at (0,.5);
			\coordinate (v2) at (0,0);
			
			\draw (a1)--(a2)--(a3)--(v2);

			\coordinate (b1) at (-0.5,-0.4);
			\coordinate (b2) at (-1.0,-0.8);
			\coordinate (v3) at (-1.5,-1.2);
			
			\draw (v2)--(b1)--(b2)--(v3);

			\coordinate (c1) at (0.5,-0.4);
			\coordinate (c2) at (1,-0.8);
			\coordinate (v4) at (1.5,-1.2);
			
			\draw (v2)--(c1)--(c2)--(v4);

			\coordinate (d1) at (2,-1.6);
			\coordinate (d2) at (2.5,-2.0);
			\coordinate (v6) at (3,-2.4);
			
			\draw (v4)--(d1)--(d2)--(v6);

			\coordinate (e1) at (1,-1.6);
			\coordinate (e2) at (.5,-2.0);
			\coordinate (v5) at (0,-2.4);
			
			\draw (v4)--(e1)--(e2)--(v5);

			\foreach \p in {a1,a2,a3,v2,b1,b2,v3,c1,c2,v4,d1,d2,v6,e1,e2,v5}
			\fill (\p) circle (1.5pt);

			\node[above] at (a1) {1};
			\node[left] at (v2) {2};
			\node[left] at (v3) {3};
			\node[right] at (v4) {4};
			\node[below] at (v5) {5};
			\node[right] at (v6) {6};

			\node[right] at (a2) {{\tiny $w_{1,(1,2)}$}};
			
			\node[right] at (a3) {{\tiny $w_{2,(1,2)}$}};
			
			\node[right] at (c1) {{\tiny $w_{2,(2,4)}$}};
			
			\node[right] at (c2) {{\tiny $w_{4,(2,4)}$}};
			
			\node[left] at (b1) {{\tiny $w_{2,(2,3)}$}};
			
			\node[left] at (b2) {{\tiny $w_{3,(2,3)}$}};
			
			\node[right] at (d1) {{\tiny $w_{4,(4,6)}$}};
			
			\node[right] at (d2) {{\tiny $w_{6,(4,6)}$}};
			
			\node[left] at (e1) {{\tiny $w_{4,(4,5)}$}};
			
			\node[left] at (e2) {{\tiny $w_{5,(4,5)}$}};
		\end{tikzpicture}
    \end{tabular}
    \caption{A tree $T$ and its 3-subdivision $T^*$.}
    \label{fig:T*}
\end{figure}

Let $T=(V,E)$ be a tree. For each edge $e \in E$ and each vertex $u \in e$, let
$$
\mathcal{C}=\{(T_{u,e}, r_{u,e}) : e \in E,\; u \in e\}
$$
denote an indexed family of rooted graphs, where $T_{u,e}=(V_{u,e},E_{u,e})$ is a graph and $r_{u,e} \in V_{u,e}$ is a fixed vertex in $V_{u,e}$. We call such a $\mathcal{C}$ a \emph{family of rooted graphs corresponding to $T$}. Here by a \emph{rooted graph}, we mean a pair of a graph and a vertex of its.

\begin{definition}
For a tree $T=(V,E)$ and a family of rooted graphs $\mathcal{C}$ corresponding to $T$,  the \emph{grafting} of $\mathcal{C}$ on $T$, denoted by $\mathcal{G}(T,\mathcal{C})$, is defined as the graph obtained from $T^*$ by identifying the root $r_{u,e}$ of $T_{u,e}$ to the vertex $w_{u,e}$ of $T^*$ for every pair $(u,e)$ with $u \in e$ and $e \in E$.

Formally,
$$
\mathcal{G}(T,\mathcal{C}) \;=\;
\left( \, T^* \;\sqcup\;\bigsqcup_{\substack{e \in E \\ u \in e}} T_{u,e} \,\right)\Big/ \sim,
$$
where the relation $\sim$ identifies the root $r_{u,e}$ of each $T_{u,e}$ with the vertex $w_{u,e}$ in $T^*$. 
\end{definition}

\begin{remark}
    As can be seen later (the proof of Theorem~\ref{thm:tree-description}), when we study $\mathcal{G}(T,\mathcal{C})$, the structure of the graphs in $\mathcal{C}$  plays a minimal role. When there are no specific restriction on $\mathcal{C}$, we will refer to $\mathcal{G}(T,\mathcal{C})$ as simply a grafting on $T$.
\end{remark}

\begin{example}
Let $T$ be the tree as in Figure~\ref{fig:T*}. The edge set $E= \{(1,2),(2,3),(2,4),(4,5),(4,6)\}$. Consider the following family of rooted trees
\[ \mathcal{C} = \{(T_{1,(1,2)},r_{1,(1,2)}), (T_{2,(1,2)},r_{2,(1,2)}), \ldots, (T_{4,(4,6)},r_{4,(4,6)}), (T_{6,(4,6)},r_{6,(4,6)})\}.\]
The grafting $\mathcal{G}(T,\mathcal{C})$ of $\mathcal{C}$ on $T$ will look like the following tree

\begin{center}
\begin{tikzpicture}
	\coordinate (a1) at (0,3);
	\coordinate (a2) at (0,2.2);
	\coordinate (a3) at (0,1.4);
	\coordinate (v2) at (0,0);
	
	\draw (a1)--(a2)--(a3)--(v2);

	\coordinate (b1) at (-0.8,-0.4);
	\coordinate (b2) at (-1.6,-0.8);
	\coordinate (v3) at (-2.4,-1.2);
	
	\draw (v2)--(b1)--(b2)--(v3);

	\coordinate (c1) at (0.8,-0.4);
	\coordinate (c2) at (1.6,-0.8);
	\coordinate (v4) at (2.4,-1.2);
	
	\draw (v2)--(c1)--(c2)--(v4);
	
	\coordinate (d1) at (3.2,-1.6);
	\coordinate (d2) at (4.0,-2.0);
	\coordinate (v6) at (4.8,-2.4);
	
	\draw (v4)--(d1)--(d2)--(v6);

	\coordinate (e1) at (2.0,-2.0);
	\coordinate (e2) at (1.6,-2.8);
	\coordinate (v5) at (1.2,-3.6);
	
	\draw (v4)--(e1)--(e2)--(v5);

	\foreach \p in {a1,a2,a3,v2,b1,b2,v3,c1,c2,v4,d1,d2,v6,e1,e2,v5}
	\fill (\p) circle (1.5pt);

	\node[above] at (a1) {1};
	\node[above left] at (v2) {2};
	\node[left] at (v3) {3};
	\node[above right] at (v4) {4};
	\node[below] at (v5) {5};
	\node[right] at (v6) {6};
	
	\draw (-0.55,2.2) circle (15pt);
	\draw (0.55,1.4) circle (15pt);
	
	\draw (-0.65,-.95) circle (15pt);
	\draw (-1.85,-.3) circle (15pt);

	\draw (1.05, 0.10) circle (15pt);
	\draw (1.33, -1.28) circle (15pt);
			
	\draw (2.46, -2.31) circle (15pt);
	\draw (1.11, -2.55) circle (15pt);	
		
	\draw (3.47, -1.12) circle (15pt);
	\draw (3.75, -2.48) circle (15pt);

	\node[left] at (a2) {{\tiny $T_{1,(1,2)}$}};	
	\node[right] at (a3) {\tiny $T_{2,(1,2)}$};
	
	\node at (1.05, 0.10) {\tiny $T_{2,(2,4)}$};
	\node at (1.33, -1.28) {\tiny $T_{4,(2,4)}$};
	
	\node at (-0.65,-.95) {\tiny $T_{2,(2,3)}$};
	\node at (-1.85,-.3) {\tiny $T_{3,(2,3)}$};
	
	\node at (2.46, -2.31) {\tiny $T_{4,(4,5)}$};
	\node at (1.11, -2.55) {\tiny $T_{5,(4,5)}$};
	
	\node at (3.47, -1.12) {\tiny $T_{4,(4,6)}$};
	\node at (3.75, -2.48) {\tiny $T_{6,(4,6)}$};
\end{tikzpicture}
\end{center}
\end{example}

\begin{theorem}\label{thm:tree-description}
A tree $T$ satisfies $P_T(-1)=0$ if and only if there exist a tree $T'$ and a family of rooted trees $\mathcal{C}$ such that $
T \cong \mathcal{G}(T',\mathcal{C})$.
\end{theorem}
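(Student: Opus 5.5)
The plan is to prove both implications by induction, using Lemma~\ref{LevitMandres09} to strip off a pendant vertex together with its support, and Lemma~\ref{lem:union-graphs} to reduce to a single component. Two facts are used throughout. First, $P_{K_1}(-1)=1-1=0$. Second, if a forest has a component $F$ with $P_F(-1)=0$, then $P$ of the whole forest also vanishes at $-1$. Conversely, if a forest has $P(-1)=0$, then some component $F$ has $P_F(-1)=0$.

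\emph{($\Leftarrow$)} We induct on $|V(T')|$.
\begin{itemize}
\item If $T'=K_1$, then $T'^*=K_1$, the family $\mathcal{C}$ is empty, and $\mathcal{G}(T',\mathcal{C})=K_1$, so $P(-1)=0$.
\item Otherwise, pick a leaf $u$ of $T'$ with unique edge $e=\{u,v\}$. In $T=\mathcal{G}(T',\mathcal{C})$, the vertex $u$ is still pendant, because nothing is grafted onto original vertices. Its support vertex is $w_{u,e}$.
\item By Lemma~\ref{LevitMandres09}, $P_T(-1)=-P_{T\setminus N[w_{u,e}]}(-1)$. The set $N[w_{u,e}]$ consists of $u$, $w_{u,e}$, $w_{v,e}$, and the neighbours of the root inside $T_{u,e}$.
\item Hence $T\setminus N[w_{u,e}]$ is the disjoint union of three pieces: a forest coming from $T_{u,e}\setminus N[r_{u,e}]$, the forest $T_{v,e}\setminus r_{v,e}$, and the grafting $\mathcal{G}(T'\setminus u,\mathcal{C}')$. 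Here $\mathcal{C}'$ is $\mathcal{C}$ with the two trees indexed by $e$ removed.
\item By induction the last piece has $P(-1)=0$, so Lemma~\ref{lem:union-graphs} gives $P_T(-1)=0$.
\end{itemize}

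\emph{($\Rightarrow$)} We induct on $|V(T)|$.
\begin{itemize}
\item If $T=K_1$, take $T'=K_1$.
\item $T$ is not a star $S_n$ with $n\geq 2$ (this includes $K_2$). Indeed, Lemma~\ref{lem:star-graph} gives $P_{S_n}(-1)=(1+x)^{n-1}+x$ evaluated at $x=-1$, which equals $-1\neq 0$.
\item So take a longest path in $T$; it has at least four vertices. Its second vertex $s$ is a support vertex all of whose neighbours are leaves except exactly one, call it $p$. Fix a leaf $u$ adjacent to $s$.
\item By Lemma~\ref{LevitMandres09}, $P_{T\setminus N[s]}(-1)=0$. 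Let $q_1,\dots,q_k$ be the neighbours of $p$ other than $s$. Then $T\setminus N[s]$ is the disjoint union of the branches $B_i$ of $T\setminus p$ containing $q_i$.
\item By Lemma~\ref{lem:union-graphs} some branch, say $B_1$, has $P_{B_1}(-1)=0$. Since $B_1$ is smaller than $T$, induction gives $B_1\cong\mathcal{G}(T'',\mathcal{C}'')$.
\end{itemize}
We now rebuild $T$ from $B_1$ by reattaching the rest of $T$ through the single edge $\{q_1,p\}$. There are two cases, according to where $q_1$ sits in the grafting.
\begin{itemize}
\item \emph{$q_1$ is not an original vertex of $T''$.} Then $q_1$ lies in some grafted tree $T_{x,e}$; this includes the case $q_1=w_{x,e}$. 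Enlarge $T_{x,e}$ by attaching all of $T\setminus B_1$ at $q_1$ via the edge $\{q_1,p\}$. This gives $T\cong\mathcal{G}(T'',\widetilde{\mathcal{C}})$, where $\widetilde{\mathcal{C}}$ is $\mathcal{C}''$ with $T_{x,e}$ enlarged.
\item \emph{$q_1$ is an original vertex of $T''$.} Set $T'=T''$ plus a new vertex, identified with $u$, joined to $q_1$ by a new edge $f$.
  \begin{itemize}
  \item Put $w_{u,f}=s$, with rooted tree consisting of $s$ and its other pendant neighbours.
  \item Put $w_{q_1,f}=p$, with rooted tree consisting of $p$ together with the branches $B_2,\dots,B_k$.
  \end{itemize}
  Then $T\cong\mathcal{G}(T',\mathcal{C})$, where $\mathcal{C}$ is $\mathcal{C}''$ together with these two rooted trees.
\end{itemize}

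The main obstacle is this forward direction. The naive step ``some component has $P(-1)=0$'' gives no control over where $q_1$ lies in the grafting structure of $B_1$. The key point to verify carefully is the case split: either $q_1$ lies inside a grafted tree, and the remainder of $T$ can be absorbed there, or $q_1$ is an original vertex, and $T'$ can be extended by the leaf $u$. In both cases the non-tree vertices $s,p$ must sit exactly at positions $w_{u,f}$ and $w_{q_1,f}$, or inside a grafted tree, so that the identification with a $3$-subdivision is literally correct.
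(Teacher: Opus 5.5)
Your proof is correct and follows essentially the same route as the paper: the support-vertex reduction of Lemma~\ref{LevitMandres09} combined with induction, the same case split on whether the attaching vertex of the vanishing component is an original vertex of $T''$ or lies in a grafted tree, and the same leaf-stripping argument for the converse. The differences are minor. After excluding stars, you choose $s$ on a longest path rather than taking an arbitrary pendant vertex as the paper does, which makes the two new rooted trees explicit (a star at $s$, and $p$ carrying $B_2,\dots,B_k$). Your converse is also a cleaner formal induction on $|V(T')|$ than the paper's iterated reduction.
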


\begin{proof}
We prove the forward implication by induction on the number of vertices in $T$.

If $|V(T)|=1$, then $T$ consists of a single vertex. In this case $P_T(-1)=0$, and we may take $T'=T$ and $\mathcal{C}=\emptyset$. Hence the statement holds.

Assume the statement holds for all trees with fewer than $n$ vertices, and let $T$ be a tree with $|V(T)|=n$ such that $P_T(-1)=0$. Since $T$ is a tree, it has a pendant vertex. Let $u$ be a pendant vertex and let $v$ be its unique neighbor. By Lemma \ref{LevitMandres09},
$P_T(-1)=(-1)\cdot P_{T \setminus N[v]}(-1).$ Since $P_T(-1)=0$, it follows that $P_{T \setminus N[v]}(-1)=0.$
Clearly $T \setminus N[v]$ is a forest. Let its connected components be $T_1,\dots,T_k$. Then
$P_{T \setminus N[v]}(-1)=\prod_{i=1}^k P_{T_i}(-1)=0,$
so there exists at least one component, say $T_i$, such that $P_{T_i}(-1)=0$. By the induction hypothesis, there exist a tree $T'_i$ and a family $\mathcal{C}_i$ such that
$$T_i \cong \mathcal{G}(T'_i,\mathcal{C}_i).$$

Since $T_i$ arises from removing $N[v]$, there exist vertices $u_i \in V(T_i)$ and $v_i \in N(v)$ such that $\{u_i,v_i\}\in E(T)$.

We now reconstruct $T$ from $T_i$, for which we consider the following two cases. 

\noindent \textbf{Case 1:}
Suppose that $u_i \in V(T'_i)$. We define a tree $T'$ and an indexed family $\mathcal{C}$ as follows. Let $$V(T') = V(T'_i)\cup\{u\}, \qquad E(T') = E(T'_i)\cup\{\{u,u_i\}\}.$$
For the pair $(u,\{u,u_i\})$, set $w_{u,\{u,u_i\}}=r_{u,\{u,u_i\}}=v$, and let $T_{u,\{u,u_i\}}$ be the subtree of $T$ rooted at $v$ obtained after deleting $u$ and $v_i$. Similarly, for the pair $(u_i,\{u,u_i\})$, set $w_{u_i,\{u,u_i\}}=r_{u_i,\{u,u_i\}}=v_i$, and let $T_{u_i,\{u,u_i\}}$ be the subtree of $T$ rooted at $v_i$ obtained after deleting $v$ and $u_i$. Finally, define $$\mathcal{C}=\mathcal{C}_i \cup \{(T_{u,\{u,u_i\}},r_{u,\{u,u_i\}}), (T_{u_i,\{u,u_i\}},r_{u_i,\{u,u_i\}})\}.$$
Then, $T \cong \mathcal{G}(T',\mathcal{C})$, as required.

\noindent \textbf{Case 2:} Suppose, $u_i$ lies in one of the trees $T_{u',e'}$ of $\mathcal{C}_i$, say $u_i \in V(T_{u',e'})$ for some $(T_{u',e'}, r_{u',e'}) \in \mathcal{C}_i$. We define $T'$ and $\mathcal{C}$ as follows.

Set $T' = T'_i$. Let $T''$ be the subtree of $T$ rooted at $u_i$ obtained by deleting $T_i \setminus \{u_i\}$ from $T$, and define
$$ 
T'_{u',e'} = T_{u',e'} \cup T''.
$$
Now define
$$ 
\mathcal{C} = \big(\mathcal{C}_i \setminus \{(T_{u',e'}, r_{u',e'})\}\big) \cup \{(T'_{u',e'}, r_{u',e'})\},
$$ 
where $(T'_{u',e'}, r_{u',e'})$ is indexed by the same pair $(u',e')$ as $(T_{u',e'}, r_{u',e'})$.

In this case as well, we obtain $T \cong \mathcal{G}(T',\mathcal{C}),$ as desired

 For the converse, we need to prove that for any tree $T$ and any indexed family $\mathcal{C}$ of rooted trees indexed by pairs of vertices and edges $(u,e)$ of $T$, where the vertex $u$ is an adjacent vertex of edge $e$, $P_{\mathcal{G}(T,\mathcal{C})}(-1) = 0.$ Let $u$ be a pendant vertex of $\mathcal{G}(T,\mathcal{C})$ which is also a pendant vertex of $T$, and $v$ be the unique neighborhood of $u$. Thus, by Lemma~\ref{LevitMandres09}, we have 
\[ P_{\mathcal{G}(T,\mathcal{C})}(-1) = (-1)P_{\mathcal{G}(T,\mathcal{C}) \setminus N[v]}(-1).\]
Now, observe that $\mathcal{G}(T,\mathcal{C}) \setminus N[v]$ is the disjoint union of $\mathcal{G}(T\setminus \{v\},\mathcal{C}\setminus \{T_{u,e},T_{v,e}\})$, $T_{u,e}\setminus N[r_{u,e}]$, and $T_{v,e}\setminus \{r_{v,e}\}.$ Therefore, by Lemma~\ref{lem:union-graphs}, we get
 \[P_{\mathcal{G}(T,\mathcal{C})}(-1) = (-1)P_{\mathcal{G}(T\setminus \{v\},\mathcal{C}\setminus \{T_{u,e},T_{v,e}\})}(-1)P_{T_{u,e}\setminus N[r_{u,e}]}(-1)P_{T_{v,e}\setminus \{r_{v,e}\}}(-1).\]
 Now, consider the graph $\mathcal{G}(T\setminus \{v\},\mathcal{C}\setminus \{T_{u,e},T_{v,e}\})$ and repeat the same process by choosing a pendant vertex. Continuing this process and using Lemmas~\ref{LevitMandres09},~\ref{lem:union-graphs} iteratively, we get
 \[P_{\mathcal{G}(T,\mathcal{C})}(-1) = (-1)^{\vert V(T) - 1\vert}P_{K_1}(-1) \prod_{\substack{e \in E \\ u,v \in e}}P_{T_{u,e}\setminus N[r_{u,e}]}(-1)P_{T_{v,e}\setminus \{r_{v,e}\}}(-1).\]
 Since $P_{K_1}(-1) = 0$, we get $P_{\mathcal{G}(T,\mathcal{C})}(-1) = 0$. This completes the proof.
\end{proof}

\begin{remark}
    A closer look at the proof of Theorem~\ref{thm:tree-description} reveals that in fact if $T$ is a tree and $\mathcal{C}$ a family of rooted graphs corresponding to $T$, then $P_{\mathcal{G}(T,\mathcal{C})} (-1)=0$. Thus the grafting operation gives more graphs $G$ such that $P_G(-1)=0$. It is straightforward to see that if $G$ is a connected graph with at least one edge, then the whiskered graph $\mathcal{W}(G)$ (see Definition~\ref{def:whiskered}) is a grafting on the path graph on two vertices, $K_2$. On the other side of the spectrum, not all graphs $G$ with $P_G(-1)=0$ can be obtained by grafting. We present a smallest example (in terms of number of vertices) of such a connected graph below.
    \begin{figure}[h]
    \centering
    \begin{tikzpicture}
            \coordinate (a1) at (-3,0);
			\coordinate (a2) at (-2,0);
			\coordinate (a3) at (-2,1);
            \coordinate (a4) at (-3,1);
            \coordinate (a5) at (-1,1);

			\draw (a1)--(a2)--(a3)--(a4)--(a1);
            \draw (a3)--(a5);

            \foreach \p in {a1,a2,a3,a4,a5}
			\fill (\p) circle (1.5pt);
         \end{tikzpicture}
    \caption{A graph with positive multiplicity that is not a grafting of a tree.}
    \label{fig:whisker}
\end{figure}
\end{remark}



\section{Graphs with high multiplicity}

In this section we study graphs $G$ with high values of $\mult_{-1}P_G$. The main result of this section is the following.

\begin{theorem}\label{thm:log-concave}
    Let $G$ be a finite simple graph on $n\geq 1$ vertices. If $\mult_{-1} P_G \geq \alpha(G)-2$, then $P_G$ is log-concave.
\end{theorem}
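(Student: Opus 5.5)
Write $m=\mult_{-1}P_G$ and $\alpha=\alpha(G)=\deg P_G$, and factor
\[
P_G(x)=(1+x)^m Q(x),
\]
where $Q$ is a polynomial of degree $\alpha-m\in\{0,1,2\}$ with $Q(-1)\neq 0$. The plan is to show that $Q$ has positive coefficients and is log-concave. We then invoke the classical fact (e.g.\ Hoggar's theorem, or the standard convolution argument) that a product of log-concave polynomials with positive coefficients and no internal zeros is again log-concave. Since $(1+x)^m$ has binomial coefficients and is log-concave, this gives the result.

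Comparing constant terms gives $Q(0)=1$. Comparing leading coefficients gives that the leading coefficient of $Q$ equals $g_\alpha>0$. We now split by the degree of $Q$.
\begin{itemize}
    \item If $\deg Q=0$, then $Q=1$ and there is nothing to prove.
    \item If $\deg Q=1$, then $Q=1+cx$ with $c=g_\alpha>0$. Such a polynomial is trivially log-concave with positive coefficients.
\end{itemize}

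The main case is $\deg Q=2$, i.e.\ $m=\alpha-2$. Write $Q=1+bx+cx^2$ with $c>0$. Here log-concavity of $Q$ means exactly $b>0$ and $b^2\ge c$.

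Comparing the $x$-coefficients gives $g_1=n=m+b$, so $b=n-m$. Since $m=\alpha-2\le n-2$, we get $b\ge 2>0$.

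Comparing the $x^2$-coefficients gives
\[
g_2=\binom{m}{2}+mb+c.
\]
On the other hand, $g_2$ counts independent $2$-subsets of an $n$-set, so
\[
g_2\le \binom{n}{2}=\binom{m+b}{2}=\binom{m}{2}+mb+\binom{b}{2}.
\]
Hence $c\le \binom{b}{2}<b^2$, and $Q$ is log-concave with positive coefficients.

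The only point needing care is the multiplication step. Both factors have strictly positive coefficients throughout their support, so the product theorem for log-concave sequences without internal zeros applies directly. This case ($\deg Q=2$), and specifically the bound $c\le\binom{b}{2}$ obtained by counting pairs, is the crux. Everything else is bookkeeping.
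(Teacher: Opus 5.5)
Your proof is correct and takes essentially the same route as the paper. Both factor off $(1+x)^{\mult_{-1}P_G}$, observe that the remaining factor is either linear or a quadratic $1+bx+cx^2$ with $b=n-\alpha(G)+2$ and $0<c\le\binom{b}{2}<b^2$, and conclude using the fact that a product of log-concave polynomials is log-concave. The only cosmetic difference is that the paper writes $c=\binom{b}{2}-|E(G)|$ exactly (Lemma~\ref{lem:almost-max-mult}), while you use the inequality $g_2\le\binom{n}{2}$; this amounts to the same coefficient comparison.
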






It is clear that $\mult_{-1}P_G\leq \alpha(G)$, and equality occurs exactly when $G$ is $\sqcup_{k=1}^{\alpha(G)} K_1$ (Lemma~\ref{lem:n-isolated-vertices}). For the rest of the section, if $n \geq 2$, we assume that $G$ has at least one edge. Then $\mult_{-1}P_G\leq \alpha(G)-1$. We analyze when the equality occurs in this case.


\begin{lemma}\label{lem:max-mult}
    Let $G$ be a finite simple graph on $n\geq 1$ vertices. Then $\mult_{-1}P_G= \alpha(G)-1$ if and only if 
    \[
    P_G(x)=(1+x)^{\alpha(G)-1}\left(1+(n-\alpha(G)+1)x\right),
    \]
    and $G\neq \sqcup_{k=1}^n K_1$.
    In particular, in this case, $|E(G)|= \binom{n-\alpha(G)+1}{2}$.
\end{lemma}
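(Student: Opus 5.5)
The converse direction is immediate, so the plan concentrates on the forward direction. Put $\alpha=\alpha(G)$ and $m=\mult_{-1}P_G$.

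\textbf{Converse.} Suppose $P_G$ has the displayed form and $G\neq\sqcup_{k=1}^n K_1$. By Lemma~\ref{lem:n-isolated-vertices} we have $\alpha<n$, so $n-\alpha+1\geq 2$. The linear factor $1+(n-\alpha+1)x$ therefore does not vanish at $x=-1$, and so $m=\alpha-1$ exactly.

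\textbf{Forward direction.} Assume $m=\alpha-1$. Then $G$ is not edgeless, since an edgeless graph has $m=\alpha$ by Lemma~\ref{lem:n-isolated-vertices}. We factor
\[
P_G(x)=(1+x)^{\alpha-1}Q(x),
\]
where $\deg Q=1$ because $\deg P_G=\alpha$. Write $Q(x)=q_0+q_1x$. Comparing constant terms gives $q_0=P_G(0)=1$.

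To find $q_1$, compare coefficients of $x$. The coefficient of $x$ in $P_G$ is $g_1=n$, and in the product it is $(\alpha-1)q_0+q_1$. Hence
\[
q_1=n-\alpha+1.
\]
This yields the displayed formula. The case $n=1$ cannot occur here: there $G=K_1$ is edgeless, so it is excluded on both sides of the equivalence.

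\textbf{Edge count.} Compare coefficients of $x^2$. The coefficient of $x^2$ in $P_G$ is $g_2$, the number of independent $2$-sets, which equals $\binom{n}{2}-|E(G)|$. In the product $(1+x)^{\alpha-1}(1+q_1x)$ the coefficient of $x^2$ is
\[
\binom{\alpha-1}{2}+(\alpha-1)q_1 .
\]
Setting $k=n-\alpha+1$, so that $n=(\alpha-1)+k$, we have
\[
\binom{n}{2}=\binom{\alpha-1}{2}+(\alpha-1)k+\binom{k}{2}.
\]
Subtracting gives $|E(G)|=\binom{k}{2}=\binom{n-\alpha+1}{2}$. This uses $\alpha\geq 2$ so that the $x^2$ terms make sense. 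If $\alpha=1$, then $G=K_n$ by Lemma~\ref{lem:complete-graph}, and the formula still holds directly since $|E(K_n)|=\binom{n}{2}$.

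\textbf{Main obstacle.} There is no real difficulty; the only care needed is in the boundary cases $n=1$ and $\alpha=1$, both handled above.
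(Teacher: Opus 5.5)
Your proposal is correct and follows essentially the paper's argument: factor $P_G=(1+x)^{\alpha-1}(1+cx)$, read off $c=n-\alpha+1$ from the coefficient of $x$, use Lemma~\ref{lem:n-isolated-vertices} to show that $c\neq 1$ holds exactly when $G$ is not edgeless, and obtain the edge count from the coefficient of $x^2$. Your separate treatment of $\alpha=1$ is harmless but unnecessary, since the $x^2$ comparison already holds there: with $\binom{0}{2}=0$ and $(\alpha-1)q_1=0$ it gives $|E(G)|=\binom{n}{2}$.
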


\begin{proof}
    Since $P_G$ is a polynomial of degree $\alpha(G)$, we have $\mult_{-1}P_G= \alpha(G)-1$ if and only if $P_G(x)=(1+x)^{\alpha(G)-1}\left(1+cx\right)$ for some constant $c$ such that $1-c\neq 0$. Moreover, we know that the coefficient of $x$ in $P_G(x)$ is $n$, which forces $c=n-\alpha+1$ in this case. Also, $1-c\neq 0$ is equivalent to $n\neq \alpha$, which occurs if and only if $G\neq \sqcup_{k=1}^n K_1$ by Lemma~\ref{lem:n-isolated-vertices}. The first statement then follows.

    For the second statement, note that the coefficient of $x^2$ in $P_G(x)$ is exactly the number of non-edges of $G$. Therefore we have
    \[
    (n-\alpha(G)+1) \binom{\alpha(G)-1}{\alpha(G)-2} + \binom{\alpha(G)-1}{2} =\binom{n}{2} -|E(G)|.
    \]
    The result then follows.
\end{proof}

Next we investigate the condition $\mult_{-1}P_G= \alpha(G)-2$. 

\begin{lemma}\label{lem:almost-max-mult}
    Let $G$ be a finite simple graph on $n\geq 1$ vertices. Then $\mult_{-1}P_G= \alpha(G)-2$ if and only if 
    \[
    P_G(x)=(1+x)^{\alpha(G)-2}\left(1+(n-\alpha(G)+2)x + \left( \binom{n-\alpha(G)+2}{2} - |E(G)| \right)x^2\right)
    \]
    and
    \[
    |E(G)|\neq \binom{n-\alpha(G)+1}{2}.
    \]
    Moreover, in this  case, we have $|E(G)| < \binom{n-\alpha(G)+2}{2}$.
\end{lemma}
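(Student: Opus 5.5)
Write $\alpha=\alpha(G)$. The plan mirrors the proof of Lemma~\ref{lem:max-mult}.

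Since $\deg P_G=\alpha$, the condition $\mult_{-1}P_G\geq \alpha-2$ holds if and only if
\[
P_G(x)=(1+x)^{\alpha-2}q(x)
\]
for some polynomial $q(x)=1+c_1x+c_2x^2$. Here the constant term is $1$ because $P_G(0)=1$. Equality $\mult_{-1}P_G=\alpha-2$ then holds exactly when $q(-1)\neq 0$, that is, $1-c_1+c_2\neq 0$.

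\emph{Identifying $c_1$ and $c_2$.} Compare the coefficients of $x$ and $x^2$ in $P_G$. The coefficient of $x$ is $n$, and the coefficient of $x^2$ is the number of non-edges, $\binom{n}{2}-|E(G)|$.
\begin{itemize}
\item From the $x$-coefficient, $(\alpha-2)+c_1=n$, so $c_1=n-\alpha+2$.
\item From the $x^2$-coefficient,
\[
\binom{\alpha-2}{2}+(\alpha-2)c_1+c_2=\binom{n}{2}-|E(G)|.
\]
\end{itemize}
Put $m=n-\alpha+2$, so that $n=(\alpha-2)+m$. The Vandermonde-type identity
\[
\binom{n}{2}=\binom{\alpha-2}{2}+(\alpha-2)m+\binom{m}{2}
\]
then gives $c_2=\binom{m}{2}-|E(G)|$. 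This is the displayed formula.

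\emph{The nonvanishing condition.} Compute
\[
1-c_1+c_2=1-m+\binom{m}{2}-|E(G)|=\binom{m-1}{2}-|E(G)|,
\]
using $\binom{m}{2}-(m-1)=\binom{m-1}{2}$. Since $m-1=n-\alpha+1$, the condition $q(-1)\neq 0$ is exactly $|E(G)|\neq\binom{n-\alpha+1}{2}$. This proves the equivalence in both directions, because each step above is reversible.

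\emph{The "moreover" part.} We need $c_2>0$. The polynomial $q$ is not divisible by $1+x$, so it has degree exactly $2$; otherwise $\deg P_G=\alpha-1$, a contradiction. Hence $c_2\neq 0$. Now look at the leading coefficient: $P_G$ has leading coefficient equal to the number of maximum independent sets, which is positive, and $(1+x)^{\alpha-2}$ has leading coefficient $1$. So $c_2=\binom{n-\alpha+2}{2}-|E(G)|>0$.

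The only step needing care is this degree/sign argument for $c_2$. The rest is routine coefficient extraction and binomial identities.
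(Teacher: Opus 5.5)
Your proof is correct and follows the paper's argument. You factor $P_G=(1+x)^{\alpha-2}(1+c_1x+c_2x^2)$, match the coefficients of $x$ and $x^2$ to find $c_1,c_2$, and get $c_2>0$ from the leading coefficient (the number of maximum independent sets); you also write out the check $1-c_1+c_2=\binom{n-\alpha+1}{2}-|E(G)|$ that the paper calls straightforward, and your inference ``not divisible by $1+x$, so degree exactly $2$'' is a non sequitur, but harmless, since the leading-coefficient comparison alone gives $c_2>0$.
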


\begin{proof}
    
        Since $P_G$ is a polynomial of degree $\alpha(G)$, we have $\mult_{-1}P_G= \alpha(G)-2$ if and only if $P_G(x)=(1+x)^{\alpha(G)-2}\left(1+ax+bx^2\right)$ for some constants $a,b$ with $1-a+b\neq 0$. In this case, we have
    \[
    (1+x)^{\alpha(G)-2}\left(1+ax+bx^2\right)=P_G(x)=1+nx+\left(\binom{n}{2}-|E(G)| \right) x^2 + \cdots.
    \]
    By matching the first three  coefficients, we obtain the system of equations
    \[
    \begin{cases}
        a&=n-\alpha(G)+2\\
        (\alpha(G)-2)a + b&= \binom{n}{2}-|E(G)| - \binom{\alpha(G)-2}{2}
    \end{cases}.
    \]
    The first statement then straightforwardly follows from solving this system. For the second statement, observe that by matching the leading coefficients, we have
    \[
    \binom{n-\alpha(G)+2}{2}-|E(G)| = b = \#\text{independent sets of $G$ of size $\alpha(G)$},
    \]
    which is positive by definition of $\alpha(G)$. This concludes the proof.
\end{proof}

We are now ready to prove the main result of this section

\begin{proof}[Proof of Theorem~\ref{thm:log-concave}]
    It is known that the product of two log-concave polynomials is log-concave \cite{product-unimodal}. Thus the result follows if $\mult_{-1}P_G\geq \alpha(G)-1$, since $P_G$ factors into linear forms then. In the case $\mult_{-1} P_G=\alpha(G)-2$, by Lemma~\ref{lem:almost-max-mult}, we have 
    \[
    P_G(x)=(1+x)^{\alpha(G)-2}\left(1+(n-\alpha(G)+2)x + \left( \binom{n-\alpha(G)+2}{2} - |E(G)| \right)x^2\right)
    \]
    with $\binom{n-\alpha(G)+2}{2} - |E(G)|>0$. In other words, the polynomial 
    \[
    1+(n-\alpha(G)+2)x + \left( \binom{n-\alpha(G)+2}{2} - |E(G)| \right)x^2
    \]
    has positive coefficients, and since
    \[
    (n-\alpha(G)+2)^2 >\binom{n-\alpha(G)+2}{2} \geq\binom{n-\alpha(G)+2}{2} - |E(G)|,
    \]
    it is log-concave. The result then follows.
\end{proof}

\begin{remark}
    It is tempting to obtain an analog for the next case $\mult_{-1}P_G=\alpha(G)-3$, which implies that
    \[
    P_G(x)=(1+x)^{\alpha(G)-3}(1+ax+bx^2+cx^3)
    \]
    for some integers $a,b,c$.
    It is straightfroward that
    \begin{align*}
        a&=n-\alpha(G)+3,\\
        b&=\binom{n-\alpha(G)+3}{2} - |E(G)|,\\
        c&= (\alpha(G)-3)|E(G)|+ \binom{n-\alpha(G)+3}{3} - \binom{n}{3} + \#\text{independent sets of $G$ of size 3}.
    \end{align*}
    It is unclear whether we have $b>0$. If this is true, then we know at least that $P_G$ is unimodal by similar arguments as in the proof of Theorem~\ref{thm:log-concave}, and the fact that the product of a log-concave polynomial and a unimodal one is unimodal \cite{product-unimodal}.
\end{remark}

We end this section with a natural question.

\begin{question}
    Which graphs $G$ satisfy $\mult_{-1}P_G = \alpha(G)-1$ (or $\mult_{-1}P_G = \alpha(G)-2$)?
\end{question}

Despite the restrictive conditions that $\mult_{-1}P_G \in \{ \alpha(G)-2, \alpha(G)-1\}$ imposes on the graph $G$, there are surprisingly many graphs with either value for $\mult_{-1} P_G$. We shall recall a method to obtain either class. 

\begin{definition}\label{def:whiskered}
    For a graph $G$ on $[n]$, let $\mathcal{W}(G)$ denote the graph on $[2n]$ with
\[
E(\mathcal{W}(G))=E(G) \cup \{ \{i,i+n\} \mid i\in [n] \}.
\]
\end{definition}

Pictorially, $\mathcal{W}(G)$ is exactly $G$ with a new pendant vertex each attached to vertices of $G$. The graph $\mathcal{W}(G)$ is sometimes called the \emph{whiskered graph} of $G$, and can also be obtained by the operation of corona product. We give the example of $K_4$ and $\mathcal{W}(K_4)$ below as an illustration.

\begin{figure}[h]
    \centering
    \begin{tikzpicture}
            \coordinate (a1) at (-3,0);
			\coordinate (a2) at (-2,0);
			\coordinate (a3) at (-2,1);
            \coordinate (a4) at (-3,1);
			
			\draw (a1)--(a2)--(a3)--(a4)--(a1)--(a3);
            \draw (a2)--(a4);

            \coordinate (b1) at (1,0);
			\coordinate (b2) at (2,0);
			\coordinate (b3) at (2,1);
            \coordinate (b4) at (1,1);
            \coordinate (b5) at (0,0);
            \coordinate (b6) at (3,0);
            \coordinate (b7) at (3,1);
            \coordinate (b8) at (0,1);

			\draw (b1)--(b2)--(b3)--(b4)--(b1)--(b3);
            \draw (b2)--(b4);
            \draw (b1)--(b5);
            \draw (b2)--(b6);
            \draw (b3)--(b7);
            \draw (b4)--(b8);

            \foreach \p in {a1,a2,a3,a4,b1,b2,b3,b4,b5,b6,b7,b8}
			\fill (\p) circle (1.5pt);
         \end{tikzpicture}
    \caption{$K_4$ and $\mathcal{W}(K_4)$.}
    \label{fig:whisker}
\end{figure}

The independence polynomial of $\mathcal{W}(G)$ is well understood.

\begin{lemma}[\protect{\cite[Theorem~2.3]{MR2379079}}]\label{lem:G*}
    Let $G$ be a finite simple graph with at least one edge. Then
    \[
    \mult_{-1}P_{\mathcal{W}(G)} = |V(G)|- \alpha(G) \quad  \text{and} \quad \alpha(\mathcal{W}(G))=|V(G)|.
    \]
\end{lemma}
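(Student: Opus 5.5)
The plan is to compute $P_{\mathcal{W}(G)}$ in closed form and then read off both invariants. The identity to aim for is
\[
P_{\mathcal{W}(G)}(x) = (1+x)^{n}\, P_G\!\left(\tfrac{x}{1+x}\right) = \sum_{k\ge 0} g_k\, x^k (1+x)^{n-k},
\]
where $n=|V(G)|$ and $g_k$ is the number of independent sets of $G$ of size $k$.

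To obtain it, classify an independent set $S$ of $\mathcal{W}(G)$ by its trace $I=S\cap V(G)$, which is an independent set of $G$. Given $I$ with $|I|=k$, the whisker vertices $i+n$ for $i\in I$ are excluded. The whisker vertices $i+n$ for $i\notin I$ can be chosen freely, since they are pairwise nonadjacent and adjacent only to $i\notin S$. These $n-k$ free whiskers contribute the factor $(1+x)^{n-k}$. Alternatively, one can apply Lemma~\ref{lem:basic-identities} repeatedly at each vertex of $G$, but the direct count is cleaner.

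The independence number follows from degrees. Each summand $g_k x^k(1+x)^{n-k}$ has degree $n$ with leading coefficient $g_k$, so the coefficient of $x^n$ in $P_{\mathcal{W}(G)}$ is $\sum_k g_k = P_G(1)>0$. Hence $\alpha(\mathcal{W}(G))=n$, which also matches the explicit independent set formed by the $n$ whiskers.

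For the multiplicity, substitute $y=1+x$, so that $x=y-1$, and write
\[
P_{\mathcal{W}(G)} = \sum_k g_k (y-1)^k y^{n-k}.
\]
Every term is divisible by $y^{n-\alpha(G)}$, since $k\le \alpha(G)$. After dividing out this factor, the remaining polynomial evaluated at $y=0$ contains only the $k=\alpha(G)$ term, which equals $g_{\alpha(G)}(-1)^{\alpha(G)}\neq 0$. Therefore $\mult_{-1}P_{\mathcal{W}(G)} = n-\alpha(G)$ exactly.

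The hypothesis that $G$ has at least one edge is not actually needed for this computation. It only guarantees $n-\alpha(G)\ge 1$, i.e.\ positive multiplicity, and it excludes the edgeless case, where $\mathcal{W}(G)$ is a perfect matching and both sides are $0$. The only delicate point is the bookkeeping in the substitution; the counting identity is the heart of the argument and carries no real obstacle.
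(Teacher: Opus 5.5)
Your proof is correct and complete. The paper gives no proof of its own and only cites Levit--Mandrescu. Your route, deriving $P_{\mathcal{W}(G)}(x)=\sum_k g_k x^k(1+x)^{n-k}=(1+x)^nP_G\bigl(\tfrac{x}{1+x}\bigr)$ by counting over the trace $S\cap V(G)$, then reading off the $x^n$ coefficient and the order of vanishing at $y=1+x=0$, is, to my knowledge, the standard identity-based argument behind the cited result.

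Your side remark is also right: for edgeless $G$ one has $\mathcal{W}(G)=nK_2$ and $P_{\mathcal{W}(G)}=(1+2x)^n$, so both formulas hold with multiplicity $0=n-\alpha(G)$. The edge hypothesis therefore only serves to make the multiplicity positive.
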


For example, $G=\mathcal{W}(K_n)$ satisfies $\mult_{-1}P_G = n-1 = \alpha(G)-1$. In fact we also have $\mult_{-1} P_{K_n}=0 = \alpha(K_n)-1$. We remark that there are more graphs with this property, even in small number of vertices, and it poses a challenging problem to characterize them all.



\section{Pairs of multiplicity and independence number}\label{sec:MI}

The goal of this section is to provide bounds for $\mathcal{MI}^c(n)$, together with lattice points in $\mathbb{Z}_{\geq 0}^2$ that can be realized on its boundary. It is clear that $\mathcal{MI}^c(1)=\{ (1,1) \}$. We note down the sets $\mathcal{MI}^c(n)$ for $2\leq n \leq 8$ for an illustration.

\begin{figure}[h!]
    \centering
    \begin{tabular}{cccccc}
         \multicolumn{2}{c}{\begin{tikzpicture}[scale=0.7]
    \draw[->] (0,0) -- (4,0) node[right] {$\mult$};
    \draw[->] (0,0) -- (0,4) node[above] {$\alpha$};

    \foreach \x in {1,2,3}
        \draw (\x,0) -- (\x,-0.1) node[below] {\x};

    \foreach \y in {1,2,3}
        \draw (0,\y) -- (-0.1,\y) node[left] {\y};

    \fill (0,1) circle (2pt);
    \node[draw, rectangle] at (3,3) {$n=2$};
\end{tikzpicture}}
&\multicolumn{2}{c}{\begin{tikzpicture}[scale=0.7]
    \draw[->] (0,0) -- (4,0) node[right] {$\mult$};
    \draw[->] (0,0) -- (0,4) node[above] {$\alpha$};

    \foreach \x in {1,2,3}
        \draw (\x,0) -- (\x,-0.1) node[below] {\x};

    \foreach \y in {1,2,3}
        \draw (0,\y) -- (-0.1,\y) node[left] {\y};

    \fill (0,1) circle (2pt);
    \fill (0,2) circle (2pt);
    \node[draw, rectangle] at (3,3) {$n=3$};
\end{tikzpicture}}
& \multicolumn{2}{c}{  \begin{tikzpicture}[scale=0.7]
    \draw[->] (0,0) -- (4,0) node[right] {$\mult$};
    \draw[->] (0,0) -- (0,4) node[above] {$\alpha$};

    \foreach \x in {1,2,3}
        \draw (\x,0) -- (\x,-0.1) node[below] {\x};

    \foreach \y in {1,2,3}
        \draw (0,\y) -- (-0.1,\y) node[left] {\y};

    \fill (0,1) circle (2pt);
    \fill (0,2) circle (2pt);
    \fill (0,3) circle (2pt);
    \fill (1,2) circle (2pt);
    \node[draw, rectangle] at (3,3) {$n=4$};
\end{tikzpicture}}\\
    \multicolumn{3}{c}{\begin{tikzpicture}[scale=0.7]
    \draw[->] (0,0) -- (7,0) node[right] {$\mult$};
    \draw[->] (0,0) -- (0,6) node[above] {$\alpha$};

    \foreach \x in {1,2,3,4,5,6}
        \draw (\x,0) -- (\x,-0.1) node[below] {\x};

    \foreach \y in {1,2,3,4,5}
        \draw (0,\y) -- (-0.1,\y) node[left] {\y};

    \fill (0,1) circle (2pt);
    \fill (0,2) circle (2pt);
    \fill (0,3) circle (2pt);
    \fill (0,4) circle (2pt);
    \fill (1,2) circle (2pt);
    \fill (1,3) circle (2pt);
    \node[draw, rectangle] at (6,5) {$n=5$};
\end{tikzpicture}} & \multicolumn{3}{c}{    \begin{tikzpicture}[scale=0.7]
    \draw[->] (0,0) -- (7,0) node[right] {$\mult$};
    \draw[->] (0,0) -- (0,6) node[above] {$\alpha$};

    \foreach \x in {1,2,3,4,5,6}
        \draw (\x,0) -- (\x,-0.1) node[below] {\x};

    \foreach \y in {1,2,3,4,5}
        \draw (0,\y) -- (-0.1,\y) node[left] {\y};

    \fill (0,1) circle (2pt);
    \fill (0,2) circle (2pt);
    \fill (0,3) circle (2pt);
    \fill (0,4) circle (2pt);
    \fill (0,5) circle (2pt);
    \fill (1,2) circle (2pt);
    \fill (1,3) circle (2pt);
    \fill (1,4) circle (2pt);
    \fill (2,3) circle (2pt);
    \fill (2,4) circle (2pt);
    \node[draw, rectangle] at (6,5) {$n=6$};
\end{tikzpicture}}\\
        \multicolumn{3}{c}{\begin{tikzpicture}[scale=0.7]
    \draw[->] (0,0) -- (7,0) node[right] {$\mult$};
    \draw[->] (0,0) -- (0,8) node[above] {$\alpha$};

    \foreach \x in {1,2,3,4,5,6}
        \draw (\x,0) -- (\x,-0.1) node[below] {\x};

    \foreach \y in {1,2,3,4,5,6,7}
        \draw (0,\y) -- (-0.1,\y) node[left] {\y};

    \fill (0,1) circle (2pt);
    \fill (0,2) circle (2pt);
    \fill (0,3) circle (2pt);
    \fill (0,4) circle (2pt);
    \fill (0,5) circle (2pt);
    \fill (0,6) circle (2pt);
    \fill (1,2) circle (2pt);
    \fill (1,3) circle (2pt);
    \fill (1,4) circle (2pt);
    \fill (1,5) circle (2pt);
    \fill (2,3) circle (2pt);
    \fill (2,4) circle (2pt);
    \fill (2,5) circle (2pt);
    \fill (3,4) circle (2pt);
    \node[draw, rectangle] at (6,6) {$n=7$};
\end{tikzpicture}} & \multicolumn{3}{c}{\begin{tikzpicture}[scale=0.7]
    \draw[->] (0,0) -- (7,0) node[right] {$\mult$};
    \draw[->] (0,0) -- (0,8) node[above] {$\alpha$};

    \foreach \x in {1,2,3,4,5,6}
        \draw (\x,0) -- (\x,-0.1) node[below] {\x};

    \foreach \y in {1,2,3,4,5,6,7}
        \draw (0,\y) -- (-0.1,\y) node[left] {\y};

    \fill (0,1) circle (2pt);
    \fill (0,2) circle (2pt);
    \fill (0,3) circle (2pt);
    \fill (0,4) circle (2pt);
    \fill (0,5) circle (2pt);
    \fill (0,6) circle (2pt);
    \fill (0,7) circle (2pt);
    \fill (1,2) circle (2pt);
    \fill (1,3) circle (2pt);
    \fill (1,4) circle (2pt);
    \fill (1,5) circle (2pt);
    \fill (1,6) circle (2pt);
    \fill (2,3) circle (2pt);
    \fill (2,4) circle (2pt);
    \fill (2,5) circle (2pt);
    \fill (2,6) circle (2pt);
    \fill (3,4) circle (2pt);
    \fill (3,5) circle (2pt);
    \fill (3,6) circle (2pt);
    \node[draw, rectangle] at (6,6) {$n=8$};
\end{tikzpicture}}
    \end{tabular}
    \caption{The set $\mathcal{MI}^c(n)$ for $n=2,3,4,5,6,7,8$.}
    \label{fig:MI}
\end{figure} 

For the rest of this section, we assume that $n\geq 2$. We start with a straightforward upper bound for $\mathcal{MI}^c(n)$.

\begin{lemma}\label{lem:MI-n-connected-upperbound}
    Let $G$ be a connected graph on $n\geq 2$ vertices. Then $0\leq \mult_{-1}P_G < \alpha(G)\leq n-1$. In other words,
    \[
     \mathcal{MI}^c(n) \subseteq  \{(a,b)\mid 0\leq a<b\leq n-2 \} \cup \{ (0,n-1) \}.
    \]
\end{lemma}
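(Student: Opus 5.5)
The plan is to establish the three inequalities $0\leq \mult_{-1}P_G$, $\mult_{-1}P_G<\alpha(G)$, and $\alpha(G)\leq n-1$ separately. We then combine them with Lemma~\ref{lem:star-graph} to pin down the single point on the line $b=n-1$.

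The first inequality is trivial. For the upper bound on $\alpha(G)$, note that since $G$ is connected with $n\geq 2$ vertices, it has at least one edge. Hence $G\neq \sqcup_{k=1}^n K_1$, and Lemma~\ref{lem:n-isolated-vertices} gives $\alpha(G)\neq n$, so $\alpha(G)\leq n-1$. For the strict inequality between multiplicity and independence number, observe that $\mult_{-1}P_G\leq \deg P_G=\alpha(G)$ because $P_G\neq 0$ (its constant term is $1$). By Lemma~\ref{lem:n-isolated-vertices}, equality holds only when $G$ is the edgeless graph, which was just excluded. Hence $\mult_{-1}P_G<\alpha(G)$.

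It remains to show that if $\alpha(G)=n-1$ then $\mult_{-1}P_G=0$. By Lemma~\ref{lem:star-graph}, a connected graph on $n\geq 2$ vertices with $\alpha(G)=n-1$ is the star $S_n$, with
\[
P_G(x)=(1+x)^{n-1}+x.
\]
Evaluating at $x=-1$ gives $P_G(-1)=-1\neq 0$, so $\mult_{-1}P_G=0$. Therefore any realized pair with $b=n-1$ is exactly $(0,n-1)$. Every other realized pair satisfies $0\leq a<b\leq n-2$.

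There is no real obstacle here. The only point requiring care is remembering to treat the boundary case $b=n-1$ via the star characterization, rather than only deriving $a<b\leq n-1$.
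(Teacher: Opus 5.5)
Your proposal is correct and follows essentially the same route as the paper. Lemma~\ref{lem:n-isolated-vertices} rules out both $\alpha(G)=n$ and $\mult_{-1}P_G=\alpha(G)$, and Lemma~\ref{lem:star-graph} forces $G=S_n$ when $\alpha(G)=n-1$, where $P_{S_n}(-1)=-1\neq 0$ gives multiplicity $0$.
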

\begin{proof}
    It is straightforward that
    \[
    0\leq \mult_{-1}P_G \leq  \alpha(G)\leq n.
    \]
    However, either $\alpha(G)=n$ or $\mult_{-1}P_G = \alpha(G)$ would imply that $G$ is the graph of $n\geq 2$ isolated vertices (Lemma~\ref{lem:n-isolated-vertices}), a contradiction. On the other hand, by Lemma~\ref{lem:star-graph}, we have $\alpha(G)=n-1$ if and only if $G$ is the star graph $S_n$, which in turn implies that $\mult_{-1}P_G=0$.  The result then follows. 
\end{proof}

The next goal is to present a lower bound for $\mathcal{MI}^c(n)$. To do so, it is necessary to present graph operations where we can control both the multiplicity and independence number of a graph. The cone operation is one such operation. The following is a direct translation of Lemma~\ref{lem:cone-ind-poly}. Thus we do not provide a proof.

\begin{lemma}\label{lem:cone-mult-alpha}
    Let $n\geq 1$ be an integer and $(a,b)\in \mathcal{MI}^c(n)$. Then we have the following:
    \begin{enumerate}
        \item $(a,b+1)\in \mathcal{MI}^c(n+1)$;
        \item $(a,b) \in \mathcal{MI}^c(n+1)$ if $a+2\leq b$;
        \item $(a',b)\in \mathcal{MI}^c(n+1)$ for any $0\leq a'<a$.
    \end{enumerate}
\end{lemma}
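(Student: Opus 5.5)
Let $G$ be a connected graph on $n$ vertices realizing $(a,b)=(\mult_{-1}P_G,\alpha(G))$. In each part we build a connected graph on $n+1$ vertices of the form $\cone(G,U)$ with $U$ an independent set of $G$, and read off its invariants from Lemma~\ref{lem:cone-ind-poly}. The only extra point is connectivity: the new vertex is adjacent to every vertex of $V(G)\setminus U$, and $G$ itself is connected. So $\cone(G,U)$ is connected as soon as $U\neq V(G)$, which holds whenever $G$ has an edge.

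The case $n=1$, $G=K_1$, $(a,b)=(1,1)$ is handled directly. For (1), the path on two vertices gives $(0,2)$; the other parts are vacuous or false here, so the statement is presumably meant for $n\ge 2$ in those parts, or they are checked by hand. From now on assume $n\ge 2$, so $G$ has an edge.

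For (1), take $U$ to be a maximum independent set, so $|U|=\alpha(G)=b$. Part (3) of Lemma~\ref{lem:cone-ind-poly} gives multiplicity $a$ and independence number $b+1$.

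For (2), we need an independent set $U$ with $a<|U|<b$. Since $a+2\le b$, we may take any $a+1$ vertices of a maximum independent set. Part (1) of Lemma~\ref{lem:cone-ind-poly} preserves both invariants.

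For (3), take $U$ to be $a'$ vertices of a maximum independent set; this is possible because $a'<a\le b$. Part (2) of Lemma~\ref{lem:cone-ind-poly} gives multiplicity $a'$ and independence number $b$. When $a'=0$ we have $U=\emptyset$, so the new vertex is adjacent to every vertex and the graph is still connected.

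The main obstacle is only bookkeeping: checking connectivity and the boundary case $n=1$. Everything else follows immediately from Lemma~\ref{lem:cone-ind-poly}.
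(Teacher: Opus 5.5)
Your argument for $n\geq 2$ is correct and is exactly what the paper intends. The paper gives no proof, calling the lemma a direct translation of Lemma~\ref{lem:cone-ind-poly}. Your choices of $U$ (a maximum independent set, $a+1$ of its vertices, or $a'$ of its vertices), together with the observation that $\cone(G,U)$ is connected whenever $U\neq V(G)$, supply exactly the omitted details.

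Your paragraph on $n=1$, however, is wrong.
\begin{enumerate}
\item The path on two vertices is $K_2$, with $P_{K_2}(x)=1+2x$, so it realizes $(0,1)$, not $(0,2)$. In any case, part (1) asks for $(a,b+1)=(1,2)$.
\item The only connected graph on two vertices is $K_2$, so $\mathcal{MI}^c(2)=\{(0,1)\}$. Hence part (1) of the statement is actually false for $n=1$.
\item Your own connectivity criterion detects this. The maximum independent set of $K_1$ is all of $V(K_1)$, so $\cone(K_1,V(K_1))=K_1\sqcup K_1$ is disconnected. Also, part (3) of Lemma~\ref{lem:cone-ind-poly} does not apply, because $K_1$ has no edge.
\end{enumerate}

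Conversely, part (3) of the statement does hold at $n=1$ by your general argument: $U=\emptyset$ gives $\cone(K_1,\emptyset)=K_2$, which realizes $(0,1)$. Part (2) is vacuous there.

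So you have the failing case backwards. The correct conclusion is that part (1) needs the hypothesis $n\geq 2$. That is the only range in which the paper applies it, in Theorem~\ref{thm:MI-c-increase} and Theorem~\ref{thm:lower-bound-MI-c}.
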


A shortcoming of the above lemma is that it does not address the points $(a,a+1)\in \mathcal{MI}^c(n)$. We give a positive answer to this in the next result.

\begin{lemma}\label{lem:diagonal-realization-subset}
    Let $n\geq 2$ be an integer and $(a,a+1)\in \mathcal{MI}^c(n)$ for some $a\geq 0$. Then $(a,a+1)\in \mathcal{MI}^c(n+1)$.
\end{lemma}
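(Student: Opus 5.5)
The plan is to use the cone construction of Lemma~\ref{lem:cone-ind-poly} with an independent set of size exactly $a$. This is the borderline case $|U|=\mult_{-1}P_G$ that the three cases of that lemma leave out. The extra ingredient is the explicit form of $P_G$ from Lemma~\ref{lem:max-mult}, which applies because $\mult_{-1}P_G=\alpha(G)-1$.

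Let $G$ be a connected graph on $n\geq 2$ vertices with $\mult_{-1}P_G=a$ and $\alpha(G)=a+1$. Since $G$ is connected and $n\geq 2$, it is not $\sqcup_{k=1}^n K_1$. Lemma~\ref{lem:max-mult} therefore gives
\[
P_G(x)=(1+x)^{a}\bigl(1+(n-a)x\bigr).
\]
Choose a maximum independent set of $G$ (of size $a+1$) and remove one vertex from it. This gives an independent set $U$ with $|U|=a$; when $a=0$ we simply take $U=\emptyset$. Set $G'=\cone(G,U)$, a graph on $n+1$ vertices.

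We first check that $G'$ is connected. The graph $G$ is connected, and the new vertex is adjacent to every vertex of $V(G)\setminus U$, which is nonempty because $|U|=a<a+1\leq n$. Next, Lemma~\ref{lem:cone-ind-poly} gives
\[
P_{G'}(x)=P_G(x)+x(1+x)^{a}=(1+x)^{a}\bigl(1+(n+1-a)x\bigr).
\]
Since $n\geq a+1$, we have $n+1-a\geq 2$. Hence the linear factor $1+(n+1-a)x$ does not vanish at $-1$, and $\mult_{-1}P_{G'}=a$. The degree of $P_{G'}$ is $a+1$, so $\alpha(G')=a+1$. This shows $(a,a+1)\in\mathcal{MI}^c(n+1)$.

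No step here should be a real obstacle. The one point that needs care is that the new factor $1+(n+1-a)x$ must not acquire a root at $-1$. This is exactly why we use $|U|=a$ rather than an arbitrary size, and why the explicit factorization from Lemma~\ref{lem:max-mult} is needed instead of the general statements in Lemma~\ref{lem:cone-ind-poly}.
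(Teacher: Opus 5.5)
Your proof is correct and matches the paper's: both cone over an independent set $U$ of size $a$ and use the factorization $P_G(x)=(1+x)^a(1+(n-a)x)$ from Lemma~\ref{lem:max-mult} to get $P_{\cone(G,U)}(x)=(1+x)^a(1+(n-a+1)x)$. The only difference is that the paper treats $a=0$ separately via $K_n$. Your choice $U=\emptyset$ handles that case uniformly (it simply produces $K_{n+1}$), and your connectivity argument via $V(G)\setminus U\neq\emptyset$ is cleaner than the paper's appeal to $a\geq 1$.
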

\begin{proof}
    Since $(0,1)$ is realized by the complete graph $K_n$ (Lemma~\ref{lem:complete-graph}) for any $n\geq 2$, we have $(0,1)\in \mathcal{MI}^c(n)$ for any $n\geq 2$. Thus for the rest of the proof we can assume that $a\geq 1$.
    
    Let $G$ be a connected graph on $n$ vertices with $\mult_{-1}P_G = a$ and $\alpha(G)=a+1$. The existence of $G$ is guaranteed by the hypothesis $(a,a+1)\in \mathcal{MI}^c(n)$. Then by Lemma~\ref{lem:max-mult}, we have
    \[
    P_G(x)=(1+x)^{a}(1+(n-a)x).
    \]    
    Since we have $\alpha(G)=a+1$, there exists an independent set $U$ of $G$ such that $|U|=a$. By Lemma~\ref{lem:cone-ind-poly}, we have
    \[
    P_{\cone(G,U)} (x)=P_G(x)+x(1+x)^a = (1+x)^{a}(1+(n-a+1)x).
    \]
    Remark that $1+(n-a+1)(-1) = \alpha(G)-n-1 \leq -1$. Thus 
    \[
    \mult_{-1} P_{\cone(G,U)}  = a \quad \text{and} \quad \alpha(\cone(G,U) )=a+1.
    \]
    Finally, due to $a\geq 1$, the graph $\cone(G,U)$ is connected on $n+1$ vertices. Thus $(a,a+1)\in \mathcal{MI}^c(n+1)$, as desired.
\end{proof}

As a consequence, we show that the set $\mathcal{MI}^c(n)$ does become larger as $n$ grows.

\begin{theorem}\label{thm:MI-c-increase}
    For any $n\geq 2$, we have
    \[
    \mathcal{MI}^c(n)\subseteq \mathcal{MI}^c(n+1).
    \]
\end{theorem}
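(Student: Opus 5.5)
Take $(a,b)\in\mathcal{MI}^c(n)$ with $n\geq 2$; we must show $(a,b)\in\mathcal{MI}^c(n+1)$. By Lemma~\ref{lem:MI-n-connected-upperbound} we have $0\leq a<b\leq n-1$. We split into cases according to the gap $b-a$. Either $b\geq a+2$, or $b=a+1$.

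\textbf{Case $b\geq a+2$.} Here Lemma~\ref{lem:cone-mult-alpha}(2) applies directly and gives $(a,b)\in\mathcal{MI}^c(n+1)$. Behind that lemma, the construction is as follows. Choose a realizing connected graph $G$. Choose an independent set $U$ with $a<|U|<b$, which is possible since $b-a\geq 2$ and subsets of a maximum independent set are independent. Then cone over $V(G)\setminus U$, using Lemma~\ref{lem:cone-ind-poly}(1).

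This case has one degenerate point to check. The cone must be connected, which is automatic as long as $V(G)\setminus U\neq\emptyset$. That holds because $|U|<b\leq n$.

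\textbf{Case $b=a+1$.} This is exactly Lemma~\ref{lem:diagonal-realization-subset}, which already handles it, including the subcase $a=0$ via the complete graph $K_{n+1}$.

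Combining the two cases covers every point of $\mathcal{MI}^c(n)$, which yields the inclusion. There is essentially no obstacle here. The only care needed is to verify that the hypotheses of Lemma~\ref{lem:cone-mult-alpha}(2) are exactly $a+2\leq b$, and that the boundary point $(0,n-1)$, coming from the star $S_n$, falls into the first case when $n\geq 3$ and into the diagonal case when $n=2$. Both are immediate.
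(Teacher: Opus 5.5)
Your proposal is correct and follows the paper's proof exactly: you get $a<b$ from Lemma~\ref{lem:MI-n-connected-upperbound}, then use Lemma~\ref{lem:cone-mult-alpha}(2) when $b\geq a+2$ and Lemma~\ref{lem:diagonal-realization-subset} when $b=a+1$. Your extra check that the cone remains connected, since $|U|<b\leq n$ forces $V(G)\setminus U\neq\emptyset$, is a detail the paper leaves implicit.
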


\begin{proof}
    Let $(a,b)\in \mathcal{MI}^c(n)$. By Lemma~\ref{lem:MI-n-connected-upperbound}, we have $a<b$. If $a+2\leq b$, then the result follows from Lemma~\ref{lem:cone-mult-alpha} (2). On the other hand, if $b=a+1$, then the result follows from Lemma~\ref{lem:diagonal-realization-subset}. This concludes the proof.
\end{proof}

We shall construct graphs on the line $\mult_{-1} P_G=\lceil n/2\rceil -1$.

For each $r,s\geq 1$, let $T_{r,s}$ and $T_{r}$ denote the graphs where the vertex sets are
\[
V(T_{r,s}) = [r+s+2] \quad \text{and}\quad V(T_r)= [2r+5]
\]
and the edges sets are
\begin{multline*}
    E(T_{r,s}) = \{ \{1,2\}, \{1,i\}, \{2,j+r\} \mid 3\leq i \leq r+2 \text{ and } 3\leq j\leq s+2  \} \quad \text{and} \\
    E(T_r) = \{ \{1,2\}, \{2,3\}, \{3,4\}, \{3,5\}, \{3,i\}, \{5,i+r\} \mid 6\leq i\leq r+5 \}.
\end{multline*}

Pictorially, $T_{r,s}$ is a tree of diameter 3, while $T_r$ is a tree of diameter 4. We illustrate these graphs with some pictures below.

\begin{figure}[h!]
    \centering
    \begin{tabular}{cccc}
         \begin{tikzpicture}
             \coordinate (v1) at (0,0);
		\coordinate (v2) at (1,0);
		\coordinate (v3) at (-1,0.73);
		\coordinate (v4) at (-1,0);
		\coordinate (v5) at (-1,-0.73);
		\coordinate (v6) at (2,0.73);
		\coordinate (v7) at (2,0);
		\coordinate (v8) at (2,-0.73);

\fill (v1) circle (1.5pt);
		\fill (v2) circle (1.5pt);
		\fill (v3) circle (1.5pt);
		\fill (v4) circle (1.5pt);
		\fill (v5) circle (1.5pt);
		\fill (v6) circle (1.5pt);
        \fill (v7) circle (1.5pt);
        \fill (v8) circle (1.5pt);

        \draw (v1)--(v2);
        \draw (v1)--(v3);
        \draw (v1)--(v4);
        \draw (v1)--(v5);
        \draw (v2)--(v6);
        \draw (v2)--(v7);
        \draw (v2)--(v8);
\end{tikzpicture}& && \begin{tikzpicture}
             \coordinate (v1) at (0,0);
		\coordinate (v2) at (1,0);
		\coordinate (v3) at (-1,1);
		\coordinate (v4) at (-1,0.33);
		\coordinate (v5) at (-1,-0.33);
		\coordinate (v6) at (-1,-1);
		\coordinate (v7) at (2,1);
		\coordinate (v8) at (2,0);
		\coordinate (v9) at (2,-1);

\fill (v1) circle (1.5pt);
		\fill (v2) circle (1.5pt);
		\fill (v3) circle (1.5pt);
		\fill (v4) circle (1.5pt);
		\fill (v5) circle (1.5pt);
		\fill (v6) circle (1.5pt);
        \fill (v7) circle (1.5pt);
        \fill (v8) circle (1.5pt);
        \fill (v9) circle (1.5pt);

        \draw (v1)--(v2);
        \draw (v1)--(v3);
        \draw (v1)--(v4);
        \draw (v1)--(v5);
        \draw (v1)--(v6);
        \draw (v2)--(v7);
        \draw (v2)--(v8);
        \draw (v2)--(v9);
\end{tikzpicture}\\
&&&\\

    \begin{tikzpicture}
             \coordinate (v1) at (0,0);
		\coordinate (v2) at (1,0);
		\coordinate (v3) at (2,0);
		\coordinate (v4) at (3,0);
		\coordinate (v5) at (4,0);
		\coordinate (v6) at (2.3,-1);
		\coordinate (v7) at (1.7,-1);

\fill (v1) circle (1.5pt);
		\fill (v2) circle (1.5pt);
		\fill (v3) circle (1.5pt);
		\fill (v4) circle (1.5pt);
		\fill (v5) circle (1.5pt);
		\fill (v6) circle (1.5pt);
        \fill (v7) circle (1.5pt);

        \draw (v1)--(v2)--(v3)--(v4)--(v5);
        \draw (v3)--(v6);
        \draw (v3)--(v7);
\end{tikzpicture}& &&\begin{tikzpicture}
             \coordinate (v1) at (0,0);
		\coordinate (v2) at (1,0);
		\coordinate (v3) at (2,0);
		\coordinate (v4) at (3,0);
		\coordinate (v5) at (4,0.3);
        \coordinate (v6) at (4,-0.3);
		\coordinate (v7) at (2.5,-1);
		\coordinate (v8) at (1.5,-1);
        \coordinate (v9) at (2,-1);

\fill (v1) circle (1.5pt);
		\fill (v2) circle (1.5pt);
		\fill (v3) circle (1.5pt);
		\fill (v4) circle (1.5pt);
		\fill (v5) circle (1.5pt);
		\fill (v6) circle (1.5pt);
        \fill (v7) circle (1.5pt);
        \fill (v8) circle (1.5pt);
        \fill (v9) circle (1.5pt);

        \draw (v1)--(v2)--(v3)--(v4)--(v5);
        \draw (v4)--(v6);
        \draw (v3)--(v7);
        \draw (v3)--(v8);
        \draw (v3)--(v9);
\end{tikzpicture}
    \end{tabular}
    \caption{The graphs $T_{3,3}$, $T_{4,3}$, $T_1$, and $T_2$.}
    \label{fig:trees}
\end{figure}

A formula for the independence polynomial for a tree of diameter at most 4 has been obtained in \cite[Proposition~5.5]{biermann-et-al.2026}, and thus both the multiplicity and independence number of such a graph are known. We record the formulae for our specially constructed graphs below.

\begin{lemma}[\protect{\cite[Remark 5.5 and Lemma 5.7]{biermann-et-al.2026}}]\label{lem:special-trees}
    For each $r,s\geq 1$, we have
    \[
    \mult_{-1} P_{T_{r,s}} =\min\{r,s\}  \quad \text{and} \quad \alpha(T_{r,s})=r+s,
    \]
    and
    \[
    \mult_{-1} P_{T_r} =r +2 \quad \text{and} \quad \alpha(T_r)=2r+2.
    \]
\end{lemma}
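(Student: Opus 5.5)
The plan is to compute both independence polynomials explicitly with Lemma~\ref{lem:basic-identities}, expanding at one well-chosen vertex. I will then read off $\alpha$ as the degree and $\mult_{-1}$ by factoring out powers of $(1+x)$ until the remaining factor no longer vanishes at $-1$. Disjoint unions are handled by Lemma~\ref{lem:union-graphs}, and stars by Lemma~\ref{lem:star-graph}.

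For $T_{r,s}$ I expand at vertex $1$. Deleting $1$ leaves $r$ isolated vertices together with the star centered at $2$ with $s$ leaves. This contributes $(1+x)^r\big((1+x)^s+x\big)$. Deleting $N[1]$ leaves the $s$ leaves of $2$, which contributes $x(1+x)^s$. Hence
\[
P_{T_{r,s}}(x)=(1+x)^{r+s}+x(1+x)^r+x(1+x)^s .
\]
By symmetry I may assume $r\le s$, and I factor out $(1+x)^r$. The cofactor is $(1+x)^s+x+x(1+x)^{s-r}$. At $x=-1$ it equals $-1$ if $s>r$ and $-2$ if $s=r$, so $\mult_{-1}P_{T_{r,s}}=r=\min\{r,s\}$. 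For the degree, every term has degree at most $r+s$, since $\max\{r,s\}+1\le r+s$. All coefficients are positive, so there is no cancellation and the leading degree is exactly $r+s$. Thus $\alpha(T_{r,s})=r+s$.

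For $T_r$ I expand at vertex $3$. Deleting $3$ leaves the edge $\{1,2\}$, the isolated vertex $4$, the $r$ isolated vertices $6,\dots,r+5$, and the star centered at $5$ with leaves $r+6,\dots,2r+5$. This contributes $(1+2x)(1+x)^{r+1}\big((1+x)^r+x\big)$. Deleting $N[3]$ leaves vertex $1$ and the $r$ leaves of $5$, which contributes $x(1+x)^{r+1}$. Factoring gives
\[
P_{T_r}(x)=(1+x)^{r+1}\big[(1+2x)(1+x)^r+2x(1+x)\big]=(1+x)^{r+2}\big[(1+2x)(1+x)^{r-1}+2x\big].
\]
The last bracket evaluates at $x=-1$ to $-2$ when $r\ge 2$, and to $-3$ when $r=1$. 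Hence $\mult_{-1}P_{T_r}=r+2$. The bracket has degree $r$ with positive leading coefficient, so $\deg P_{T_r}=2r+2=\alpha(T_r)$.

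The only step needing care is the second factoring for $T_r$. After the first extraction of $(1+x)^{r+1}$, the cofactor still vanishes at $-1$, so one more factor of $(1+x)$ must be pulled out. The case $r=1$ must also be checked separately in the final evaluation, because the factor $(1+x)^{r-1}$ then becomes $1$ rather than vanishing at $-1$.
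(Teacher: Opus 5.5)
Your proof is correct, but it takes a different route from the paper. The paper gives no argument of its own for this lemma; it imports the values from the general formula for independence polynomials of trees of diameter at most $4$ in \cite{biermann-et-al.2026}.

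You instead expand once at a central vertex via Lemma~\ref{lem:basic-identities} and obtain the explicit formulas $P_{T_{r,s}}(x)=(1+x)^{r+s}+x(1+x)^r+x(1+x)^s$ and $P_{T_r}(x)=(1+x)^{r+2}\bigl[(1+2x)(1+x)^{r-1}+2x\bigr]$. Your evaluations are right: the cofactor for $T_{r,s}$ is $-1$ or $-2$ at $-1$, the bracket for $T_r$ is $-2$ or $-3$, and the degree counts are correct.

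Your formula for $T_{r,s}$ is exactly the $t=0$ instance of Case~2 in the proof of Theorem~\ref{thm:alpha=n-2}: there $n-1$ and $n$ are adjacent and have no common neighbours, and the graph is precisely $T_{r,s}$. So that half of the lemma is already implicit in the paper. The $T_r$ half has no internal counterpart.

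The trade-off is this. The citation handles a whole class of trees at once. Your computation makes the lemma self-contained, using only Lemmas~\ref{lem:basic-identities}, \ref{lem:union-graphs} and \ref{lem:star-graph}. You also correctly flag the one delicate point, the case $r=1$ for $T_r$, where $(1+x)^{r-1}$ does not vanish at $-1$.
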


We will also need the following lemma. 

\begin{lemma}\label{lem:a+1-and-b+1}
    Let $n\geq 2$ be an integer and $(a,b)\in \mathcal{MI}^c(n)$ for some $a\geq 0$. Then $(a+1,b+1)\in \mathcal{MI}^c(n+2)$.
\end{lemma}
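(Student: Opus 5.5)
Given a connected graph $G$ on $n$ vertices realizing $(a,b)$, I will attach a single pendant edge: add two new vertices $u,w$ and the edges $\{v,u\}$, $\{u,w\}$ for some fixed $v\in V(G)$. Call the result $H$. It is connected on $n+2$ vertices, and $w$ is a pendant vertex with support vertex $u$.

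The computation uses Lemma~\ref{lem:basic-identities} at the vertex $w$. Since $H\setminus\{w\}$ is $G$ with a pendant $u$ at $v$, and $H\setminus N[w]=G$, we get
\[
P_H(x)=P_{H\setminus\{w\}}(x)+xP_G(x).
\]
Applying Lemma~\ref{lem:basic-identities} again at $u$ in $H\setminus\{w\}$ gives $P_{H\setminus\{w\}}(x)=P_G(x)+xP_{G\setminus\{v\}}(x)$. Combining,
\[
P_H(x)=(1+x)P_G(x)+xP_{G\setminus\{v\}}(x).
\]
This is not obviously divisible by $(1+x)^{a+1}$, so attaching at an arbitrary $v$ is the wrong move.

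The better choice is to make the pendant path's neighborhood interact cleanly. Take instead the disjoint union $G\sqcup K_1$ and cone over it. Let $U$ be an independent set of $G\sqcup\{z\}$ containing $z$, and add a new vertex $y$ adjacent to everything outside $U$. Lemma~\ref{lem:union-graphs} gives $\mult_{-1}P_{G\sqcup K_1}=a+1$ and $\alpha(G\sqcup K_1)=b+1$, on $n+1$ vertices. Then Lemma~\ref{lem:cone-ind-poly} applies:
\begin{enumerate}
\item If $b\ge a+1$, i.e.\ always by Lemma~\ref{lem:MI-n-connected-upperbound}, choose $|U|$ with $a+1<|U|<b+1$ when $b\ge a+2$. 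Case (1) keeps $(a+1,b+1)$.
\item In the diagonal case $b=a+1$ there is no room for such $|U|$. Here choose $|U|=b+1=\alpha$, a maximum independent set containing $z$, which exists by extending $\{z\}$ by a maximum independent set of $G$. Case (3) gives $(a+1,b+2)$, which overshoots.
\end{enumerate}

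The cone is connected as long as $U$ misses some vertex of each component. $G$ is connected with $n\ge2$ vertices, so $U\cap V(G)$ misses a vertex of $G$, and $z\notin U$ must fail only if we want $z$ joined. So I will insist that $z\notin U$, which makes $y$ adjacent to $z$ and to some vertex of $G$, hence the cone is connected. The cone has $n+2$ vertices, as required.

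Thus the off-diagonal case $b\ge a+2$ is handled by an independent set $U\subseteq V(G)$ with $a+1<|U|<b+1$. Such a set exists since $|U|\le b$ and $b\ge a+2$.

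The main obstacle is the diagonal case $b=a+1$. There I will use Lemma~\ref{lem:max-mult} instead: $P_G=(1+x)^a(1+(n-a)x)$. I will take $U\subseteq V(G)$ independent with $|U|=a+1$ and cone $G\sqcup K_1$ over $U$ (so $z\notin U$):
\[
P=(1+x)P_G+x(1+x)^{a+1}=(1+x)^{a+1}\bigl(1+(n-a+1)x\bigr).
\]
The last factor does not vanish at $-1$ since $n-a+1\ge 2$. So the multiplicity is $a+1$ and the independence number is $a+2=b+1$. This gives $(a+1,b+1)\in\mathcal{MI}^c(n+2)$ in all cases.
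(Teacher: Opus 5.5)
Your final argument is correct and is essentially the paper's proof: cone $G\sqcup K_1$ over an independent set $U\subseteq V(G)$, so the new vertex is joined to the isolated vertex and, since $U\neq V(G)$, to $G$. Then apply Lemma~\ref{lem:cone-ind-poly}(1) when $b\ge a+2$, and use the factorization $P_G=(1+x)^a(1+(n-a)x)$ from Lemma~\ref{lem:max-mult} when $b=a+1$. The only differences are that the paper fixes $|U|=b$ in both cases, and that you should cut the abandoned pendant-path attempt and the contradictory remarks about whether $z\in U$ from a final write-up.
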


\begin{proof}
    Let $G$ be a connected graph with $n$ vertices that realizes the pair $(a,b)$, i.e., $\mult_{-1} P_G=a$ and $\alpha(G)=b$. The latter implies that there exists an independent set $U$ of $G$ of size $b$. Note that $U$ is also an independent set of $G\sqcup K_1$. We then consider the graph $\cone(G\sqcup K_1,U)$, a connected graph with $n+2$ vertices by construction. It now suffices to show that
    \begin{equation}\label{eq:a+1-and-b+1}
        \mult_{-1} P_{\cone(G\sqcup K_1,U)} = a+1 \quad \text{and} \quad \alpha(\cone(G\sqcup K_1,U)) = b+1.
    \end{equation}

    Assume that $b\geq a+2$. Then by Lemma~\ref{lem:union-graphs}, we have
    \[
    \mult_{-1} P_{G\sqcup K_1} = a+1 < b< b+1=\alpha(G\sqcup K_1).
    \]
    Thus (\ref{eq:a+1-and-b+1}) follows from Lemma~\ref{lem:cone-ind-poly}. 

    Now we can assume that $b=a+1$ since $b>a$ by Lemma~\ref{lem:MI-n-connected-upperbound}. By Lemmas~\ref{lem:union-graphs} and \ref{lem:max-mult}, we have
    \[
    P_{G\sqcup K_1}(x) = (1+x)^{b}(1+(n-b+1)x). 
    \]
    Thus by Lemma~\ref{lem:cone-ind-poly}, we obtain
    \[
    P_{\cone(G\sqcup K_1,U)}(x)= (1+x)^{b}(1+(n-b+1)x) + x(1+x)^b = (1+x)^b(1+(n-b+2)x),
    \]
    which implies $\alpha(\cone(G\sqcup K_1,U))=b+1$. On the other hand,
    since
    \[
    1+(n-b+2)(-1) = \alpha(G) -n-1 \leq -1,
    \]
    we have $\mult_{-1} P_{\cone(G\sqcup K_1,U)} = b=a+1$. Thus (\ref{eq:a+1-and-b+1}) holds, as desired.
\end{proof}

We are now ready to give a lower bound for $\mathcal{MI}^c(n)$.

\begin{theorem}\label{thm:lower-bound-MI-c}
    Let $n\geq 2$ be an integer. If $n$ is odd, then
    \begin{equation}\label{eq:MI-c-odd}
        \{(a,b)\in \mathbb{Z}_{\geq 0}^2\mid 0\leq a<b\leq n-2 \text{ and } a\leq \lceil n/2 \rceil-1 \} \setminus \{(\lceil n/2  \rceil-1,n-2)\}\cup \{(0,n-1)\}\subseteq \mathcal{MI}^c(n),
    \end{equation}
    and if $n$ is even, then 
    \begin{equation}\label{eq:MI-c-even}
        \{(a,b)\in \mathbb{Z}_{\geq 0}^2\mid 0\leq a<b\leq n-2 \text{ and } a\leq  n/2 -1 \} \cup \{(0,n-1)\}\subseteq \mathcal{MI}^c(n).
    \end{equation}
\end{theorem}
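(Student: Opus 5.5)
We will prove both inclusions simultaneously by strong induction on $n$, using the monotonicity $\mathcal{MI}^c(n)\subseteq\mathcal{MI}^c(n+1)$ (Theorem~\ref{thm:MI-c-increase}), the cone moves of Lemma~\ref{lem:cone-mult-alpha}, and the diagonal shift $(a,b)\mapsto(a+1,b+1)$ from $n$ to $n+2$ (Lemma~\ref{lem:a+1-and-b+1}). The special trees of Lemma~\ref{lem:special-trees} will supply the extreme points on the line $a=\lceil n/2\rceil-1$. Base cases $n=2,3$ are immediate: $(0,1)$ comes from $K_n$, and $(0,n-1)$ from the star $S_n$ (Lemma~\ref{lem:star-graph}). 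The point $(0,n-1)$ is always handled by $S_n$. Moreover $(0,b)$ for every $1\le b\le n-2$ lies in $\mathcal{MI}^c(n-1)\subseteq \mathcal{MI}^c(n)$ by induction, so only points with $a\geq 1$ need work.

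For a point $(a,b)$ with $1\le a$, $a<b\le n-2$, and $a$ in the allowed range for $n$, I plan to apply Lemma~\ref{lem:a+1-and-b+1} to $(a-1,b-1)$ at $n-2$. I first need $(a-1,b-1)\in\mathcal{MI}^c(n-2)$, which requires three checks.
\begin{enumerate}
\item We need $b-1\le n-4$ or $(a-1,b-1)=(0,n-3)$.
\item We need $a-1$ to lie in the allowed range for $n-2$. Since $\lceil (n-2)/2\rceil -1=\lceil n/2\rceil -2$, this bound is consistent.
\item For odd $n$, the excluded point at $n-2$ is $(\lceil n/2\rceil-2,\,n-4)$, and it is the image of $(\lceil n/2\rceil -1,n-3)$. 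So that point needs separate treatment.
\end{enumerate}
Thus the shift covers every point except those with $b\in\{n-3,n-2\}$ and $a\geq 2$ (plus $b=n-2$ with $a=1$, where $(0,n-3)$ is fine at $n-2$ via the star, so that case works). For points with $b\le n-3$ that are not on the boundary line, I would instead use $\mathcal{MI}^c(n-1)\subseteq\mathcal{MI}^c(n)$, since such points already satisfy the constraints for $n-1$ whenever $a\le\lceil (n-1)/2\rceil -1$. So the genuinely new points at each $n$ are:
\begin{enumerate}
\item the points with $b=n-2$;
\item the points with $a=\lceil n/2\rceil-1$ when $n$ is even, which is a new value of $a$.
\end{enumerate}

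For the row $b=n-2$ I would use the trees. The tree $T_{r,s}$ has $r+s+2$ vertices and realizes $(\min\{r,s\}, r+s)$. With $r+s=n-2$ and $\min\{r,s\}=a$, this gives every $(a,n-2)$ with $1\le a\le \lfloor (n-2)/2\rfloor$. For even $n$ that is $a\le n/2-1$, which is the full range. For odd $n$ it is $a\le (n-3)/2=\lceil n/2\rceil-2$, exactly matching the exclusion of $(\lceil n/2\rceil-1,n-2)$. For the boundary line with even $n$, I need $(n/2-1,b)$ for $n/2\le b\le n-2$. The tree $T_r$ has $2r+5$ vertices and realizes $(r+2,2r+2)$, so with $n=2r+6$ the cone move Lemma~\ref{lem:cone-mult-alpha}(1) gives $(r+2,2r+3)\in\mathcal{MI}^c(n-1)$; I hope these feed the diagonal shift. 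The cleaner route is to take $(n/2-2,b-1)$ at $n-2$ (allowed there, since $n/2-2=(n-2)/2-1$) and shift it. The corner $b=n-2$ is covered by $T_{r,s}$, and the diagonal point $b=a+1$ is obtained by shifting $(n/2-2,n/2-1)$.

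For odd $n$, the line $a=\lceil n/2\rceil-1$ with $b\le n-3$ has the same value of $a$ as at $n-1$ (since $\lceil (n-1)/2\rceil -1=(n-1)/2-1$ is one smaller, it is not the same). So those points also need the shift from $n-2$, which fails only at $b=n-3$. There I would use $T_r$ with $2r+5=n$: it gives $(r+2,2r+2)=(\lceil n/2\rceil-1,n-3)$, precisely the missing point. I expect this bookkeeping to be the main obstacle: verifying that every point is reached either by monotonicity, by the shift, or by $T_{r,s}$ and $T_r$, with odd and even $n$ handled separately at the corners $b\in\{n-3,n-2\}$.
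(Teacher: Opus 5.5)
Your proposal is correct and is essentially the paper's argument: you induct from $n-2$ via Lemma~\ref{lem:a+1-and-b+1} together with monotonicity, and you fill the points the shift misses with $T_{r,s}$ on the row $b=n-2$ and with $T_{(n-5)/2}$ at $(\lceil n/2\rceil-1,n-3)$ for odd $n$. Your bookkeeping also lets the induction start at $n=2,3$ rather than relying on the computed table for $n\le 8$. One slip to fix, though it does no harm since your case analysis treats both boundary lines anyway: in your list of ``genuinely new points'', the bound $\lceil n/2\rceil-1$ increases when passing to odd $n$, not even $n$, as you notice yourself later.
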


\begin{proof}
    Note that the results follow from Figure~\ref{fig:MI} if $n\leq 8$. We now proceed by induction on $n$.

    Assume that $n\geq 8$ is even. By induction, (\ref{eq:MI-c-even}) holds for $n-2$: 
    \begin{align*}
        \{(a,b)\in \mathbb{Z}_{\geq 0}^2\mid 0\leq a<b\leq n-4 \text{ and } a\leq \lceil (n-2)/2 \rceil-1 \} \cup \{(0,n-3)\} \subseteq \mathcal{MI}^c(n-2).
    \end{align*}
    Note that $\lceil (n-2)/2 \rceil = n/2 -1$ since $n$ is even. We can then rewrite the above as follows:
    \[
    \{(a,b)\in \mathbb{Z}_{\geq 0}^2\mid 0\leq a<b\leq n-4 \text{ and } a\leq  n/2 -2 \}  \cup \{(0,n-3)\}\subseteq \mathcal{MI}^c(n-2).
    \]
    By Lemmas~\ref{lem:cone-mult-alpha} (1) and \ref{lem:a+1-and-b+1} and Theorem~\ref{thm:MI-c-increase}, we have 
    \begin{align*}
        \mathcal{MI}^c(n)
        &\supseteq \{(a,b),(a,b+1), (a,b+2), (a+1,b+1)\in \mathbb{Z}_{\geq 0}^2\mid (a,b)\in \mathcal{MI}^c(n-2)\}\\
        &\supseteq \{(a,b)\in \mathbb{Z}_{\geq 0}^2\mid 0\leq a<b\leq n-2 \text{ and } a\leq n/2 -1 \}  \cup \{(0,n-1)\}\setminus \{(n/2-1,n-2)\}.
    \end{align*}
    On the other hand, the pair $(n/2-1,n-2)$ can be realized by the graph $T_{n/2-1,n/2-1}$, a tree with $n$ vertices, by Lemma~\ref{lem:special-trees}. Thus (\ref{eq:MI-c-even}) holds, as desired.
    
    Now we can assume that $n\geq 9$ is odd. By induction, (\ref{eq:MI-c-odd}) holds for $n-2$:
    \begin{multline*}
        \{(a,b)\in \mathbb{Z}_{\geq 0}^2\mid 0\leq a<b\leq n-4 \text{ and } a\leq \lceil (n-2)/2 \rceil-1 \} \setminus \{(\lceil (n-2)/2  \rceil-1,n-4)\}\\
        \cup \{(0,n-3)\}\subseteq \mathcal{MI}^c(n-2)
    \end{multline*}
    Note that $\lceil (n-2)/2 \rceil =  (n-1)/2 $ since $n$ is odd. We can then rewrite the above as follows:
    \begin{multline*}
        \{(a,b)\in \mathbb{Z}_{\geq 0}^2\mid 0\leq a<b\leq n-4 \text{ and } a\leq (n-3)/2  \} \setminus \{((n-3)/2 ,n-4)\}\\
        \cup \{(0,n-3)\}\subseteq \mathcal{MI}^c(n-2).
    \end{multline*}
    Note that we have $((n-3)/2,n-5)\in \mathcal{MI}^c(n-2)$ since $n\geq 9$.
    By Lemmas~\ref{lem:cone-mult-alpha} (1) and \ref{lem:a+1-and-b+1} and Theorem~\ref{thm:MI-c-increase}, we then have 
    \begin{align*}
        \mathcal{MI}^c(n)
        &\supseteq \{(a,b),(a,b+1), (a,b+2), (a+1,b+1)\in \mathbb{Z}_{\geq 0}^2\mid (a,b)\in \mathcal{MI}^c(n-2)\}\\
        &\begin{multlined}[t]
            \supseteq \{(a,b)\in \mathbb{Z}_{\geq 0}^2\mid 0\leq a<b\leq n-2 \text{ and } a\leq (n-1)/2  \}
        \cup \{(0,n-1)\}\\
         \setminus \{((n-1)/2 ,n-2),  ((n-3)/2 ,n-2), ((n-1)/2 ,n-3)\}.
        \end{multlined}
    \end{align*}
    On the other hand, the pairs  $  ((n-3)/2 ,n-2)$ and $((n-1)/2 ,n-3)$ can be realized by the trees $T_{(n-3)/2,(n-1)/2}$ and $T_{(n-5)/2}$, respectively, with both on $n$ vertices, by Lemma~\ref{lem:special-trees}. Thus (\ref{eq:MI-c-odd}) holds, as desired.
\end{proof}




In fact, the lower bound in Theorem~\ref{thm:lower-bound-MI-c} is indeed the whole set $\mathcal{MI}^c(n)$ for $2\leq n\leq 8$. We believe this to be always the case. It is straightforward to see that the equality reduces to the following question.

\begin{question}\label{ques}
    Given a connected graph $G$ on $n\geq 2$ vertices. Is it true that $\mult_{-1}P_G\leq \lceil n/2\rceil -1$?
\end{question}

We fell short of determining  $\mathcal{MI}^c(n)$. Instead we will determine the realizable points along its boundary, which according to Lemma~\ref{lem:MI-n-connected-upperbound} is determined by the three lines $a=0$, $b=n-2$, and $b=a+1$ in the plane $\mathbb{Z}^2_{\geq 0}$. We start with the horizontal line $b=n-2$.

\begin{theorem}\label{thm:alpha=n-2}
    Let $n\geq 2$ be an integer. Then
    \[
    \mathcal{MI}^c(n) \cap \{(a,n-2)\in \mathbb{Z}_{\geq 0}^2\} = \{ (a,n-2) \in \mathbb{Z}_{\geq 0}^2\mid 0\leq a\leq \lfloor n/2 \rfloor -1 \}.
    \]
\end{theorem}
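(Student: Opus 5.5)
The plan has two parts: realize every pair $(a,n-2)$ with $0\le a\le\lfloor n/2\rfloor-1$, and show that no connected graph with $\alpha(G)=n-2$ has larger multiplicity. Throughout I take $n\ge 3$, since for $n=2$ the only connected graph is $K_2$ with $\alpha=1$, so the displayed set $\{(0,0)\}$ is not realized and that case has to be set aside.

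\emph{Realization.} For $1\le a\le \lfloor n/2\rfloor -1$, set $r=a$ and $s=n-2-a$. Then $r\le s$, and $T_{r,s}$ is a tree on $n$ vertices. By Lemma~\ref{lem:special-trees} it has $\mult_{-1}P=\min\{r,s\}=a$ and $\alpha=n-2$. For $a=0$, I would use Theorem~\ref{thm:lower-bound-MI-c}, which contains $(0,n-2)$ for all $n\ge 3$. Alternatively one can write down directly a graph whose independence polynomial is $(1+x)^{n-2}+2x+x^2$, or use Lemma~\ref{lem:cone-mult-alpha}(3).

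\emph{Upper bound: the polynomial.} Let $G$ be connected on $n$ vertices with $\alpha(G)=n-2=:m$. Fix a maximum independent set $I$; its complement is $\{x,y\}$. Since $G$ is connected and $I$ is independent, every vertex of $I$ is adjacent to $x$ or to $y$. Partition $I=A\sqcup B\sqcup C$, where
\begin{itemize}
\item $A$ consists of the vertices adjacent only to $x$,
\item $B$ consists of the vertices adjacent only to $y$,
\item $C$ consists of the vertices adjacent to both.
\end{itemize}
Let $\varepsilon=1$ if $xy\notin E(G)$ and $\varepsilon=0$ otherwise. Enumerating independent sets according to how they meet $\{x,y\}$ gives
\[
P_G(x)=(1+x)^{m}+x(1+x)^{|B|}+x(1+x)^{|A|}+\varepsilon x^2 .
\]
The independent sets containing $x$ but not $y$ are $x$ together with a subset of $I\setminus N(x)=B$, and symmetrically for $y$. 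The set $\{x,y\}$ can only be extended by vertices of $I\setminus(N(x)\cup N(y))=\emptyset$. This formula reduces the problem to elementary root counting.

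\emph{Upper bound: the case analysis.} This is the main step, and the case $\varepsilon=1$ is the delicate one. Write $k=\min\{|A|,|B|\}$ and $l=\max\{|A|,|B|\}$, and note $k\le\lfloor m/2\rfloor=\lfloor n/2\rfloor-1$.
\begin{itemize}
\item If $\varepsilon=0$, factor out $(1+x)^k$. The cofactor is $(1+x)^{m-k}+x+x(1+x)^{l-k}$. Evaluated at $-1$ it equals $-2$ if $l=k$, and $-1$ if $k<l$ (then $m>k$, so the first term vanishes). Hence $\mult_{-1}P_G=k$.
\item If $\varepsilon=1$ and $k\ge1$, then $P_G(-1)=1\ne0$, so $\mult_{-1}P_G=0$.
\item If $\varepsilon=1$ and $k=0$ with, say, $A=\emptyset$, then $x+x^2=x(1+x)$. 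Hence $P_G=(1+x)\bigl[(1+x)^{m-1}+x(1+x)^{|B|-1}+x\bigr]$ if $|B|\ge1$. The bracket evaluates at $-1$ to $-1$ when $m\ge2$ (it is $-2$ when $m=|B|=1$, which is still nonzero), so the multiplicity is $1\le\lfloor m/2\rfloor$ once $m\ge2$.
\item The remaining small cases are $A=B=\emptyset$, where $P_G(-1)=0-2+1\ne0$, and $m=1$. Both are checked by hand.
\end{itemize}
In every case $\mult_{-1}P_G\le\lfloor n/2\rfloor-1$, which combined with the realization part gives the stated equality.
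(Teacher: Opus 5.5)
Your argument is correct and is essentially the paper's proof: the same partition of a maximum independent set according to adjacency to the two remaining vertices (the paper's $r,s,t$), the same closed formula for $P_G$, the same case split on whether those two vertices are adjacent, and realization through $T_{r,s}$ (which you apply directly for every $a\geq 1$) and Theorem~\ref{thm:lower-bound-MI-c}. You are right that $n=2$ must be excluded, since the stated equality fails there. The remaining slips are harmless: in the case $\varepsilon=1$, $A=\emptyset$ the bracket at $-1$ is actually $-2$ when $|B|=1<m$ and $-1$ when $m=|B|=1$, which is nonzero either way, and your parenthetical alternative via Lemma~\ref{lem:cone-mult-alpha}(3) cannot yield $(0,n-2)$, because $\mathcal{MI}^c(n-1)$ contains no point $(a,n-2)$ with $a\geq 1$.
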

\begin{proof}
    $(\subseteq):$ Let $G$ be a graph $n$ vertices with $\alpha(G)=n-2$. We can then set $V(G)=[n]$ such that $[n-2]$ is an independent set of $G$. We want to show that $\mult_{-1}P_G \leq \lfloor n/2 \rfloor-1$. Set
    \begin{align*}
        r&= |\{ i\in [n-2] \mid \text{$i$ is adjacent to $n-1$, but not $n$} \}|  &= |N_G(n-1)\setminus N_G(n)|,\\
        s&= |\{ i\in [n-2] \mid \text{$i$ is adjacent to $n$, but not $n-1$} \}|&=|N_G(n)\setminus N_G(n-1)|,\\
        t&= |\{ i\in [n-2] \mid \text{$i$ is adjacent to $n-1$ and $n$} \}|&=|N_G(n-1)\cap N_G(n)|.
    \end{align*}
    In particular we have $r+s+t=n-2$. Applying Lemma~\ref{lem:basic-identities}, we have
    \begin{equation}\label{eq:ind-poly-alpha=n-2}
        P_G(x)= P_{G\setminus \{n\}} (x) + xP_{G\setminus N[n]}(x).
    \end{equation}
    We have two cases.

    \textbf{Case 1:} Assume that $\{n-1,n\}\notin E(G)$. Then
     $G\setminus \{n\}$ is the disjoint union of the star graph $S_{r+t+1}$ with $s$ isolated vertices, and that $G\setminus N[n]$ is the star graph $S_{r+1}$. Thus (\ref{eq:ind-poly-alpha=n-2}) gives
    \[
    P_G(x)= (1+x)^s \left( (1+x)^{r+t} +x \right) + x\left((1+x)^{r}+x\right) = (1+x)^{n-2} + x(1+x)^r + x(1+x)^s +x^2.
    \]
    Note that $n\geq 3$, as $n=2$ would imply that $G$ is the graph of two isolated vertices, a contradiction to the hypothesis that $G$ is connected. Thus
    \[
    \mult_{-1}P_G = \begin{cases}
        0 &\text{if $r=s=0$ or $r,s>0$},\\
        1 &\text{if exactly one between $r$ and $s$ is 0}.
    \end{cases}
    \]
    If $n=3$, then $G$ being connected forces $r=s=0$ and $t=1$. Then $\mult_{-1}P_G=\mult_{-1}P_{S_3}=0 \leq \lfloor 3/2\rfloor -1$, as desired. On the other hand, if $n\geq 4$, then $\mult_{-1} P_G \leq 1 \leq \lfloor n/2\rfloor -1$, as~desired.

    \textbf{Case 2:} Assume that $\{n-1,n\}\in E(G)$. Then
     $G\setminus \{n\}$ is the disjoint union of the star graph $S_{r+t+1}$ with $s$ isolated vertices, and that $G\setminus N[n]$ is the graph of $r$ isolated vertices. Thus (\ref{eq:ind-poly-alpha=n-2}) gives
     \[
     P_G(x)= (1+x)^s \left( (1+x)^{r+t} +x \right) + x(1+x)^{r} = (1+x)^{n-2} + x(1+x)^r+x(1+x)^s. 
     \]
     Thus \[
     \mult_{-1}P_G = \min \{r,s\}\leq \left\lfloor \frac{n-2}{2} \right\rfloor =\left\lfloor \frac{n}{2} \right\rfloor-1,
     \]
     as desired, where the inequality is due to the condition $r+s+t=n-2$.

    $(\supseteq):$ This follows from Theorem~\ref{thm:lower-bound-MI-c}.
\end{proof}

All the possible points on the vertical line $a=0$ can be realized.

\begin{corollary}\label{cor:the-line-a=0}
    Let $n\geq 2$ be an integer. Then
    \[
    \mathcal{MI}^c(n) \cap \{(0,b)\in \mathbb{Z}_{\geq 0}^2\} = \{ (0,b) \in \mathbb{Z}_{\geq 0}^2\mid 0<b\leq n -1 \}.
    \]
\end{corollary}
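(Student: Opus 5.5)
Both inclusions can be proved directly, since the lattice points on the line $a=0$ are easy to realize. For the inclusion $\subseteq$, apply Lemma~\ref{lem:MI-n-connected-upperbound}. Any connected graph $G$ on $n\geq 2$ vertices satisfies $0\leq \mult_{-1}P_G<\alpha(G)\leq n-1$. Hence a pair $(0,b)\in\mathcal{MI}^c(n)$ must have $0<b\leq n-1$.

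For the reverse inclusion, there are two routes.

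\emph{Route via Theorem~\ref{thm:lower-bound-MI-c}.} The lower bound there contains every pair $(0,b)$ with $0<b\leq n-2$, because $a=0\leq \lceil n/2\rceil-1$ holds for all $n\geq 2$. The excluded point in the odd case is $(\lceil n/2\rceil -1, n-2)$, and it has $a=0$ only when $n\leq 2$; this never conflicts with odd $n\geq 3$. The point $(0,n-1)$ is listed explicitly. One caveat is that this theorem was proved by induction using Figure~\ref{fig:MI} for small $n$, so I would prefer a direct construction that avoids relying on that figure.

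\emph{Direct route.} For $1\leq b\leq n-1$, take the complete multipartite-type graph obtained from $K_{n-b+1}$ by attaching $b-1$ pendant vertices to a single vertex $v$ of the clique. Equivalently, take $K_{n-b}$ and cone it with $b$ independent leaves. The cleanest version is to take $G=\cone(K_{n-b}\sqcup \overline{K_{b}},U)$ with $U$ empty, but that is only a rough idea; a concrete family is easier to verify.

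The concrete choice I would commit to is this. Let $G_b$ be the graph consisting of a clique $K_{n-b}$ together with $b$ further vertices $w_1,\dots,w_b$, each adjacent to one fixed vertex $v$ of the clique and to nothing else.
\begin{itemize}
\item For $b=n-1$, this is the star $S_n$. Lemma~\ref{lem:star-graph} gives $P_{S_n}(x)=(1+x)^{n-1}+x$, which evaluates to $-1\neq 0$ at $x=-1$.
\item In general, Lemma~\ref{lem:basic-identities} applied at $v$ gives
\[
P_{G_b}(x)=(1+x)^b\bigl(1+(n-b-1)x\bigr)+x.
\]
At $x=-1$ this equals $-1\neq 0$, so $\mult_{-1}P_{G_b}=0$.
\item The independence number is $\alpha(G_b)=b$ when $n-b-1\geq 1$, attained by $\{w_1,\dots,w_b\}$ or by replacing one $w_i$ with a non-$v$ clique vertex, which does not increase the size. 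When $n-b=1$, we get $\alpha(G_b)=b=n-1$.
\item Connectivity of $G_b$ is clear.
\end{itemize}

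No step is a real obstacle; the only care needed is the degenerate cases $b=1$ (giving $K_n$, consistent with Lemma~\ref{lem:complete-graph}) and $b=n-1$.
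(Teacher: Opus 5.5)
Your inclusion $\subseteq$ and your ``Route via Theorem~\ref{thm:lower-bound-MI-c}'' are correct, and together they are exactly the paper's proof. The problem is the direct construction you say you would commit to: the independence number of $G_b$ is wrong. If $n-b\geq 2$, pick a clique vertex $u\neq v$. Since $u$ is adjacent to no $w_i$, the set $\{w_1,\dots,w_b,u\}$ is independent, so $\alpha(G_b)=b+1$, not $b$. Your step ``replacing one $w_i$ with a non-$v$ clique vertex'' should have been ``adding'' one. Your own formula $P_{G_b}(x)=(1+x)^b(1+(n-b-1)x)+x$ has degree $b+1$ whenever $n-b-1\geq 1$, which already contradicts $\alpha(G_b)=b$. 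Likewise $G_1$ is $K_{n-1}$ with one pendant vertex, not $K_n$, and it has $\alpha(G_1)=2$ for $n\geq 3$. So the family $G_1,\dots,G_{n-1}$ realizes only $(0,2),\dots,(0,n-1)$ and misses $(0,1)$ whenever $n\geq 3$.

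The repair is an index shift, and it is in fact the graph described in your own first sentence. For $2\leq b\leq n-1$, take $K_{n-b+1}$ with $b-1$ pendant vertices attached at a single clique vertex $v$ (this is your $G_{b-1}$). For $b=1$, take $K_n$. Lemma~\ref{lem:basic-identities} at $v$ gives $P(x)=(1+x)^{b-1}(1+(n-b)x)+x$. This has degree $b$ and value $-1$ at $x=-1$ for $b\geq 2$, while $P_{K_n}(-1)=1-n\neq 0$.

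With this fix the direct route is a valid, self-contained alternative. It avoids the inductive Theorem~\ref{thm:lower-bound-MI-c} and that theorem's reliance on the computed cases $n\leq 8$ in Figure~\ref{fig:MI}. The paper's route, by contrast, gets the corollary for free from the more general lower bound.
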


\begin{proof}
    The inclusion $(\subseteq)$ is from Theorem~\ref{lem:MI-n-connected-upperbound}, while $(\supseteq)$ follows from Theorem~\ref{thm:lower-bound-MI-c}. 
\end{proof}

The line $b=a+1$ is a lot trickier. Intuitively, the points $(a,a+1)$ with low values of $a$ can be realized. This is supported by Theorem~\ref{thm:lower-bound-MI-c}. The question becomes how large $a$ can be, in terms of a function in $n$, provided that $(a,a+1)$ can be realized in $\mathcal{MI}^c(n)$. 

\begin{proposition}\label{prop:diagonal-upper-bound}
    For any $n\geq 2$, if $(a,a+1)$ can be realized in $\mathcal{MI}^c(n)$ for some integer $a$, then
    \[
    a\leq n-\frac{1+\sqrt{8n-7}}{2},
    \]
    with equality implying that the graph that realizes $(a,a+1)$ is a tree.
\end{proposition}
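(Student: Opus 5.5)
The plan is to combine the edge count from Lemma~\ref{lem:max-mult} with the fact that a connected graph on $n$ vertices has at least $n-1$ edges. Let $G$ be a connected graph on $n\geq 2$ vertices with $\mult_{-1}P_G=a$ and $\alpha(G)=a+1$.

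\textbf{Step 1: count edges.} Since $G$ is connected with $n\geq 2$, it is not $\sqcup K_1$. So Lemma~\ref{lem:max-mult} applies with $\alpha(G)=a+1$ and gives
\[
|E(G)|=\binom{n-a}{2}.
\]
Connectivity gives $|E(G)|\geq n-1$, with equality exactly when $G$ is a tree. Writing $m=n-a$, we get
\[
m(m-1)\geq 2(n-1),
\]
that is, $m^2-m-(2n-2)\geq 0$.

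\textbf{Step 2: solve the quadratic.} The positive root of $m^2-m-(2n-2)$ is
\[
\frac{1+\sqrt{1+8(n-1)}}{2}=\frac{1+\sqrt{8n-7}}{2}.
\]
Since $m=n-\alpha(G)+1\geq 1>0$, the inequality $m^2-m-(2n-2)\geq 0$ forces $m$ to be at least this root. Hence
\[
a=n-m\leq n-\frac{1+\sqrt{8n-7}}{2}.
\]

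\textbf{Step 3: the equality case.} If equality holds, then $m$ equals the root exactly, so $m^2-m=2(n-1)$. This means $|E(G)|=n-1$. A connected graph on $n$ vertices with $n-1$ edges is a tree, which proves the second claim.

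The only point needing care is justifying that $m$ lies above the positive root rather than below the negative one, and this follows from $m\geq 1$. I do not expect a serious obstacle: the whole argument rests on the edge formula in Lemma~\ref{lem:max-mult}.
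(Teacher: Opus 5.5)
Your proposal is correct and follows essentially the same route as the paper: combine $|E(G)|=\binom{n-\alpha(G)+1}{2}$ from Lemma~\ref{lem:max-mult} with $|E(G)|\geq n-1$ for connected $G$, then solve the quadratic, discarding the other root because it is incompatible with $\alpha(G)\leq n$ (your $m\geq 1$ plays this role). Your Step~3 spells out the equality case, $|E(G)|=n-1$ forcing a tree, which the paper states but leaves implicit.
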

\begin{proof}
    Let $G$ be a connected graph on $n$ vertices with $\mult_{-1}P_G=a$ and $\alpha(G)=a+1$. Since $G$ is connected, it has at least $n-1$ edges. By Lemma~\ref{lem:max-mult}, we have
    \[
    \binom{n-\alpha(G)+1}{2} \geq n-1.
    \]
    Solving this inequality, we obtain
    \[
    \text{either} \quad \alpha(G) \geq \frac{2n+1+\sqrt{8n-7}}{2} \quad \text{or} \quad \alpha(G) \leq \frac{2n+1-\sqrt{8n-7}}{2}.
    \]
    The former is not possible since it would imply that $\alpha(G) \geq \frac{2n+1+\sqrt{8n-7}}{2} > n$. We thus obtain
    \[
    \alpha(G) \leq \frac{2n+1-\sqrt{8n-7}}{2}.
    \]
    Substituting $\alpha(G)=a+1$, we obtain the desired result.
\end{proof}

\begin{remark}
    Unfortunately, not all lattice points $(a,a+1)$ with $a\leq n-\frac{1+\sqrt{8n-7}}{2}$ can be realized in $\mathcal{MI}^c(n)$. Let $n=\frac{k^2+7}{8}$ where $k$ is an odd positive integer. We then have\[
    n-\frac{1+\sqrt{8n-7}}{2} = \frac{k^2-4k+3}{2}.
    \] Consider the point 
    \[
    A_k=\left( \frac{k^2-4k+3}{2}, \frac{k^2-4k+5}{2}\right).
    \]
    By Proposition~\ref{prop:diagonal-upper-bound}, if a connected graph on $n=\frac{k^2+7}{8}$ vertices realizes $A_k$, it must be a tree. Experiments show that $A_3, A_5, A_7$ can be realized, while $A_1, A_9, A_{11}$ cannot, a somewhat surprising~observation.
    \begin{figure}[h!]
        \centering
        \begin{tabular}{ccccc}
             \begin{tikzpicture}[scale=1, every node/.style={circle,fill=black,inner sep=1pt}]
\node (A) at (1,3) {};
\node (B) at (0,3) {};

\draw (A)--(B);
\end{tikzpicture}&& \begin{tikzpicture}[scale=1, every node/.style={circle,draw,fill=black,inner sep=1pt}]
\node (A1) at (0,0) {};
\node (A2) at (1,0) {};
\node (A3) at (2,0) {};
\node (A4) at (3,0) {};

\draw (A1)--(A2)--(A3)--(A4);
\end{tikzpicture} && \begin{tikzpicture}[scale=1, every node/.style={circle,draw,fill=black,inner sep=1pt}]
\node (A1) at (0,0) {};
\node (A2) at (1,0) {};
\node (A3) at (2,0) {};
\node (A4) at (3,0) {};
\node (A5) at (4,0) {};
\node (B1) at (2.5,1) {};
\node (B2) at (1.5,1) {};

\draw (A1)--(A2)--(A3)--(A4)--(A5);
\draw (B2)--(A3)--(B1);
\end{tikzpicture}
        \end{tabular}
        \caption{Trees on $n=\frac{k^2+7}{8}$ vertices that realize $A_k$ for $k=3,5,7$.}
        \label{fig:k=3,5,7}
    \end{figure}
\end{remark}

All pairs that can be realized by connected graphs, without the restriction on the number of vertices, can be determined easily.

\begin{corollary}\label{cor:MI-all-connected}
    We have
    \[
    \bigcup_{n=1}^\infty \mathcal{MI}^c(n) = \{(a,b) \in \mathbb{Z}_{\geq 0}^2\mid 0\leq a< b \} \cup \{(1,1)\}.
    \]
\end{corollary}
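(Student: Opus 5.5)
The argument is a double inclusion: the union is contained in the stated set by Lemma~\ref{lem:MI-n-connected-upperbound} plus the trivial case $n=1$, and every point of the stated set is realized by Theorem~\ref{thm:lower-bound-MI-c} once $n$ is chosen large enough.

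\emph{Inclusion $(\subseteq)$.} For $n=1$ the only connected graph is $K_1$. Since $P_{K_1}(x)=1+x$, it contributes the pair $(1,1)$. For $n\geq 2$, Lemma~\ref{lem:MI-n-connected-upperbound} gives $0\leq \mult_{-1}P_G<\alpha(G)$ for every connected graph $G$ on $n$ vertices. Hence every pair in $\mathcal{MI}^c(n)$ with $n\geq 2$ satisfies $0\leq a<b$.

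\emph{Inclusion $(\supseteq)$.} The point $(1,1)$ comes from $K_1$, as above. Now fix integers $0\leq a<b$. We want some $n\geq 2$ with $(a,b)\in \mathcal{MI}^c(n)$, and we take $n$ so large that both $b\leq n-2$ and $a\leq \lfloor n/2\rfloor -1$ hold. A convenient choice is any even $n\geq \max\{b+2,\,2a+2,\,2\}$.

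With $n$ even, the even case of Theorem~\ref{thm:lower-bound-MI-c} has no excluded point. Since $0\leq a<b\leq n-2$ and $a\leq n/2-1$, it gives $(a,b)\in\mathcal{MI}^c(n)$ directly.

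If one insists on odd $n$ instead, the single excluded point $(\lceil n/2\rceil-1,n-2)$ can be avoided by increasing $n$ by $2$. Alternatively, Theorem~\ref{thm:MI-c-increase} guarantees that realizability at some $n$ persists for all larger $n$.

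I expect no real obstacle. The only step needing care is choosing $n$ so that it avoids the excluded boundary point in the odd case, and taking $n$ even sidesteps that completely.
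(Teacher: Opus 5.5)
Your proposal is correct and follows the paper's own argument: $(\subseteq)$ comes from Lemma~\ref{lem:MI-n-connected-upperbound} together with $\mathcal{MI}^c(1)=\{(1,1)\}$, and $(\supseteq)$ comes from Theorem~\ref{thm:lower-bound-MI-c}. Your explicit choice of an even $n\geq\max\{b+2,\,2a+2\}$, which sidesteps the excluded point in the odd case, just fills in a detail the paper leaves implicit.
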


\begin{proof}
    The containment $(\subseteq)$ follows from Lemma~\ref{lem:MI-n-connected-upperbound} and the fact that $\mathcal{MI}^c( 1) = \{(1,1)\}$, while $(\supseteq)$ follows from Theorem~\ref{thm:lower-bound-MI-c}. 
\end{proof}

Finally, we compute $\mathcal{MI}(n)$ for any $n\geq 1$. Allowing disconnected graphs essentially means that the union operation can be used in graph construction.

\begin{lemma}\label{lem:add-1-isolated-vertex}
    If $(a,b)\in \mathcal{MI}(n)$ for some integers $a,b,n$, then $(a+1,b+1)\in \mathcal{MI}(n+1)$.
\end{lemma}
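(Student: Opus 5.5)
The plan is to realize the new pair by adding one isolated vertex. Since $(a,b)\in\mathcal{MI}(n)$, I fix a graph $G$ on $n$ vertices with $\mult_{-1}P_G=a$ and $\alpha(G)=b$, and then consider $G\sqcup K_1$. This graph has $n+1$ vertices, so it remains to show that it realizes $(a+1,b+1)$.

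The computation uses Lemma~\ref{lem:union-graphs} for the disjoint union. It gives $P_{G\sqcup K_1}(x)=(1+x)P_G(x)$, because $P_{K_1}(x)=1+x$. Hence
\[
\mult_{-1}P_{G\sqcup K_1}=\mult_{-1}P_G+1=a+1 \quad\text{and}\quad \alpha(G\sqcup K_1)=\alpha(G)+1=b+1,
\]
using $\mult_{-1}P_{K_1}=1$ and $\alpha(K_1)=1$. It follows that $(a+1,b+1)\in\mathcal{MI}(n+1)$.

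Unlike the connected case, there is no case analysis here: no connectivity constraint needs to be preserved, so the cone constructions of Lemma~\ref{lem:cone-ind-poly} are unnecessary. The only point to check is bookkeeping, namely that $\mathcal{MI}(n)$ allows arbitrary, possibly disconnected, graphs on $n$ vertices. This holds by definition, so I do not expect any real obstacle.
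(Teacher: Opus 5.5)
Your proof is correct and matches the paper's argument: take a graph $G$ on $n$ vertices realizing $(a,b)$, pass to $G\sqcup K_1$, and apply Lemma~\ref{lem:union-graphs} with $P_{K_1}(x)=1+x$ to get $(a+1,b+1)\in\mathcal{MI}(n+1)$.
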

\begin{proof}
    If $G$ is a graph on $n$ vertices with $\mult_{-1}P_G=a$ and $\alpha(G)=b$, then $\mult_{-1}P_{G\sqcup K_1}=a+1$ and $\alpha(G\sqcup K_1)=b+1$ by Lemma~\ref{lem:union-graphs}. The result then follows.
\end{proof}

\begin{theorem}\label{thm:MI-n-all-graphs}
    Let $n\geq 1$ be an integer. We have
    \[
    \mathcal{MI}(n) = \{(a,b)\in \mathbb{Z}_{\geq 0}^2\mid 0\leq a< b \leq n-1 \} \cup \{(n,n)\}.
    \]
\end{theorem}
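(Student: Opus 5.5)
The inclusion $(\subseteq)$ is immediate. For any graph $G$ on $n$ vertices we have $0\leq \mult_{-1}P_G\leq \alpha(G)\leq n$. By Lemma~\ref{lem:n-isolated-vertices}, either equality $\mult_{-1}P_G=\alpha(G)$ or $\alpha(G)=n$ forces $G$ to be the graph of $n$ isolated vertices, which gives exactly the pair $(n,n)$. Every other graph therefore satisfies $a<b\leq n-1$.

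For $(\supseteq)$, the pair $(n,n)$ is realized by $\sqcup_{k=1}^n K_1$. Now fix $(a,b)$ with $0\leq a<b\leq n-1$; this forces $n\geq 2$. The plan is to realize it as a connected graph together with $a$ isolated vertices. Set $(a',b')=(0,b-a)$ and $m=n-a$. Then $b'=b-a\geq 1$ and $b'\leq n-1-a=m-1$, so $m\geq 2$.

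By Corollary~\ref{cor:the-line-a=0}, there is a connected graph $H$ on $m$ vertices with $\mult_{-1}P_H=0$ and $\alpha(H)=b-a$. Concretely, we may take $H=K_m$ when $b-a=1$, or the star $S_m$ when $b-a=m-1$. Applying Lemma~\ref{lem:add-1-isolated-vertex} $a$ times, i.e.\ taking $H\sqcup \bigsqcup_{k=1}^a K_1$ and using Lemma~\ref{lem:union-graphs}, gives a graph on $m+a=n$ vertices with multiplicity $a$ and independence number $b$. Hence $(a,b)\in\mathcal{MI}(n)$.

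The only step needing care is the invocation of Corollary~\ref{cor:the-line-a=0}, which in turn rests on Theorem~\ref{thm:lower-bound-MI-c}. If one prefers a self-contained construction, one can build the graph directly: start from $K_{m-b'+1}$ and cone repeatedly over growing independent sets, using Lemma~\ref{lem:cone-ind-poly}(3) to raise $\alpha$ by one at each step while keeping the multiplicity at $0$ and adding exactly one vertex per cone. This is routine, so I expect no real obstacle beyond checking the vertex count $m=n-a\geq b-a+1$, which is exactly the hypothesis $b\leq n-1$.
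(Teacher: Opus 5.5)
Your proof is correct, and its skeleton is the paper's. The paper inducts on $n$, using Lemma~\ref{lem:add-1-isolated-vertex} to obtain every pair with $a\geq 1$ from $\mathcal{MI}(n-1)$. Unrolling that induction gives exactly your decomposition: a graph on $n-a$ vertices realizing $(0,b-a)$, together with $a$ isolated vertices.

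The genuine difference is how the points $(0,b)$, $1\leq b\leq n-1$, are obtained.
\begin{itemize}
\item The paper writes down explicit graphs: $K_n$, $S_n$, and, for $2\leq b\leq n-2$, the disconnected graph $S_b\sqcup K_{n-b}$. This needs only Lemmas~\ref{lem:union-graphs}, \ref{lem:complete-graph} and \ref{lem:star-graph}.
\item Your main route goes through Corollary~\ref{cor:the-line-a=0}, hence through Theorem~\ref{thm:lower-bound-MI-c}, whose induction rests on base cases read off from the enumeration in Figure~\ref{fig:MI}. This is logically available at that point, with no circularity. It is, however, much heavier than necessary, because connectedness of $H$ plays no role in membership in $\mathcal{MI}(n)$.
\item Your fallback starts from $K_{n-b+1}$ and cones $b-a-1$ times over maximum independent sets, each step justified by Lemma~\ref{lem:cone-ind-poly}(3). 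That lemma applies because the copy of $K_{n-b+1}$, which has $n-b+1\geq 2$ vertices, survives every step and supplies an edge.
\end{itemize}

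The fallback is as elementary as the paper's construction. It also gives a slightly finer conclusion: every pair with $a<b$ is realized by a connected graph together with $a$ isolated vertices. The paper's choice is simply shorter.
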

\begin{proof}
    $(\subseteq):$ It is clear that $0\leq \mult_{-1} P_G\leq \alpha(G) \leq n$ for any graph $G$ on $n$ vertices. However, note that either $\mult_{-1}P_G= \alpha(G)$ or $\alpha(G)=n$ implies that $\mult_{-1}P_G= \alpha(G)=n$ by Lemma~\ref{lem:n-isolated-vertices}. The inclusion $(\subseteq)$ then follows.

    $(\supseteq):$ It is straightforward that $\mathcal{MI}(1)=\{(1,1)\}$ and $\mathcal{MI}(2)=\{(0,1),(2,2)\}$. These settle the result in the cases $n\in \{1,2\}$. By induction, we assume that 
    \[
    \mathcal{MI}(n-1) = \{(a,b)\mid 0\leq a< b \leq n-2 \} \cup \{(n-1,n-1)\}
    \]
    for some $n\geq 3$. Then by Lemma~\ref{lem:add-1-isolated-vertex}, we have
    \[
    \{(a,b)\mid 1\leq a< b \leq n-1 \} \cup \{(n,n)\} \subseteq \mathcal{MI}(n).
    \]
    It now suffices to realize the pairs $(0,b)$ where $1\leq b\leq n-1$ with graphs on $n$ vertices. If $b=1$, then the complete graph $K_n$ realizes $(0,b)=(0,1)$ by Lemma~\ref{lem:complete-graph}. If $b=n-1$, then the star graph $S_n$ realizes the pair $(0,b)=(0,n-1)$ by Lemma~\ref{lem:star-graph}. Now we can assume that $2\leq b\leq n-2$. Equivalently, we have $b\geq 2$ and $n-b\geq 2$. Then it is straightforward that the pair $(0,b)$ is realized by the graph $S_{b}\sqcup K_{n-b}$ by Lemmas~\ref{lem:union-graphs}, \ref{lem:complete-graph}, and \ref{lem:star-graph}. This concludes the~proof.
\end{proof}


\section{From spectral graph theory: multiplicity of line graph of forests}\label{sec:spectral}

In this section, we examine the relationship between the independence polynomial of the line graph of a forest and two other polynomials associated to graphs, namely, characteristic polynomial and matching polynomial. We use these connections to investigate when $-1$ is a root of the independence polynomial, beginning with some preliminary definitions.

Let $G$ be a simple graph on the vertex set $\{1, \dots, n\}$. The \textit{adjacency matrix} of $G$ is the $n \times n$ symmetric matrix $A(G) = [a_{ij}]$ where
\[
a_{ij} = 
\begin{cases} 
1 & \text{if vertices } i \text{ and } j \text{ are adjacent,} \\
0 & \text{otherwise.}
\end{cases}
\]
By definition, $a_{ii} = 0$ for all $i \in \{1, \dots, n\}$, and $a_{ij} = a_{ji}$ for all $1 \leq i, j \leq n$. The {\textit{characteristic polynomial}} of $G$ is the polynomial $\phi_G(x)=\det\,(xI-A(G))$, and its roots are the {\textit{eigenvalues}} of $G$.

A {\textit{matching}} $M$ in a graph $G$ is a set of edges such that no two share a vertex in common. If $|M|=k$, then $M$ is called a $k$-{\textit{matching}}. Let $m_k(G)$ denote the number of $k$-matchings in $G$, with the convention that $m_0(G)=1$. The {\textit{matching number}} of $G$, denoted by $\nu_G$, is the maximum $k$ such that $m_k(G)\neq 0$. The {\textit{matching polynomial}} is defined by $$\mu_G(x)=\sum_{k=0}^{\nu_G}(-1)^k\, m_k(G)\, x^{n-2k}.$$

The \textit{line graph} $L(G)$ of a graph $G$ is the graph whose vertices are the edges of $G$, and where two vertices of $L(G)$ are adjacent if and only if the corresponding edges in $G$ share a common vertex. We denote by $e_{i,j}$ the vertex in $L(G)$ corresponding to the edge between vertices $i$ and $j$ in $G$. It is easy to see that $L(P_n)=P_{n-1}$ and $L(C_n)=C_n$. 

\begin{figure}[h]
    \centering
\scalebox{1.2}{

\tikzset{every picture/.style={line width=0.75pt}} 

\begin{tikzpicture}[x=0.75pt,y=0.75pt,yscale=-1,xscale=1]

\draw    (70,70) -- (70,130) ;
\draw    (130,70) -- (130,130) ;
\draw    (70,70) -- (130,70) ;
\draw    (70,130) -- (130,130) ;
\draw    (70,70) -- (130,130) ;
\draw    (246,60) -- (246,100) ;
\draw    (246,100) -- (246,140) ;
\draw    (246,60) -- (286,100) ;
\draw    (286,100) -- (246,140) ;
\draw    (206,100) -- (246,140) ;
\draw    (246,60) -- (206,100) ;
\draw    (246,100) -- (206,100) ;
\draw    (286,100) -- (246,100) ;
\draw  [fill={rgb, 255:red, 0; green, 0; blue, 0 }  ,fill opacity=1 ] (68.29,130) .. controls (68.29,129.05) and (69.05,128.29) .. (70,128.29) .. controls (70.95,128.29) and (71.71,129.05) .. (71.71,130) .. controls (71.71,130.95) and (70.95,131.71) .. (70,131.71) .. controls (69.05,131.71) and (68.29,130.95) .. (68.29,130) -- cycle ;
\draw  [fill={rgb, 255:red, 0; green, 0; blue, 0 }  ,fill opacity=1 ] (128.29,130) .. controls (128.29,129.05) and (129.05,128.29) .. (130,128.29) .. controls (130.95,128.29) and (131.71,129.05) .. (131.71,130) .. controls (131.71,130.95) and (130.95,131.71) .. (130,131.71) .. controls (129.05,131.71) and (128.29,130.95) .. (128.29,130) -- cycle ;
\draw  [fill={rgb, 255:red, 0; green, 0; blue, 0 }  ,fill opacity=1 ] (128.29,70) .. controls (128.29,69.05) and (129.05,68.29) .. (130,68.29) .. controls (130.95,68.29) and (131.71,69.05) .. (131.71,70) .. controls (131.71,70.95) and (130.95,71.71) .. (130,71.71) .. controls (129.05,71.71) and (128.29,70.95) .. (128.29,70) -- cycle ;
\draw  [fill={rgb, 255:red, 0; green, 0; blue, 0 }  ,fill opacity=1 ] (68.29,70) .. controls (68.29,69.05) and (69.05,68.29) .. (70,68.29) .. controls (70.95,68.29) and (71.71,69.05) .. (71.71,70) .. controls (71.71,70.95) and (70.95,71.71) .. (70,71.71) .. controls (69.05,71.71) and (68.29,70.95) .. (68.29,70) -- cycle ;
\draw  [fill={rgb, 255:red, 0; green, 0; blue, 0 }  ,fill opacity=1 ] (204.29,100) .. controls (204.29,99.05) and (205.05,98.29) .. (206,98.29) .. controls (206.95,98.29) and (207.71,99.05) .. (207.71,100) .. controls (207.71,100.95) and (206.95,101.71) .. (206,101.71) .. controls (205.05,101.71) and (204.29,100.95) .. (204.29,100) -- cycle ;
\draw  [fill={rgb, 255:red, 0; green, 0; blue, 0 }  ,fill opacity=1 ] (244.29,140) .. controls (244.29,139.05) and (245.05,138.29) .. (246,138.29) .. controls (246.95,138.29) and (247.71,139.05) .. (247.71,140) .. controls (247.71,140.95) and (246.95,141.71) .. (246,141.71) .. controls (245.05,141.71) and (244.29,140.95) .. (244.29,140) -- cycle ;
\draw  [fill={rgb, 255:red, 0; green, 0; blue, 0 }  ,fill opacity=1 ] (284.29,100) .. controls (284.29,99.05) and (285.05,98.29) .. (286,98.29) .. controls (286.95,98.29) and (287.71,99.05) .. (287.71,100) .. controls (287.71,100.95) and (286.95,101.71) .. (286,101.71) .. controls (285.05,101.71) and (284.29,100.95) .. (284.29,100) -- cycle ;
\draw  [fill={rgb, 255:red, 0; green, 0; blue, 0 }  ,fill opacity=1 ] (244.29,60) .. controls (244.29,59.05) and (245.05,58.29) .. (246,58.29) .. controls (246.95,58.29) and (247.71,59.05) .. (247.71,60) .. controls (247.71,60.95) and (246.95,61.71) .. (246,61.71) .. controls (245.05,61.71) and (244.29,60.95) .. (244.29,60) -- cycle ;
\draw  [fill={rgb, 255:red, 0; green, 0; blue, 0 }  ,fill opacity=1 ] (244.29,100) .. controls (244.29,99.05) and (245.05,98.29) .. (246,98.29) .. controls (246.95,98.29) and (247.71,99.05) .. (247.71,100) .. controls (247.71,100.95) and (246.95,101.71) .. (246,101.71) .. controls (245.05,101.71) and (244.29,100.95) .. (244.29,100) -- cycle ;

\draw (56.15,60.18) node [anchor=north west][inner sep=0.75pt]  [font=\tiny]  {$1$};
\draw (135.04,60.2) node [anchor=north west][inner sep=0.75pt]  [font=\tiny]  {$2$};
\draw (135,132.2) node [anchor=north west][inner sep=0.75pt]  [font=\tiny]  {$3$};
\draw (56.2,131.51) node [anchor=north west][inner sep=0.75pt]  [font=\tiny]  {$4$};
\draw (242,45.29) node [anchor=north west][inner sep=0.75pt]  [font=\tiny]  {$e_{1,2}$};
\draw (248.89,88.62) node [anchor=north west][inner sep=0.75pt]  [font=\tiny]  {$e_{1,3}$};
\draw (293.22,96) node [anchor=north west][inner sep=0.75pt]  [font=\tiny]  {$e_{2,3}$};
\draw (180.22,95.96) node [anchor=north west][inner sep=0.75pt]  [font=\tiny]  {$e_{1,4}$};
\draw (242,147.4) node [anchor=north west][inner sep=0.75pt]  [font=\tiny]  {$e_{3,4}$};
\draw (93.6,141.6) node [anchor=north west][inner sep=0.75pt]  [font=\scriptsize]  {$G$};
\draw (274.8,122) node [anchor=north west][inner sep=0.75pt]  [font=\scriptsize]  {$L( G)$};

\end{tikzpicture}

}    
    \caption{The diamond graph $G$ and its line graph $L(G)$}
    \label{fig:glg}
\end{figure}

Line graphs of trees have a particularly simple structure. The following lemma provides a characterization of line graphs of trees in terms of block graphs.

\begin{lemma}[\protect{\cite{fdb8a226-e07b-3d43-b47f-a71e676a165a}}]\label{rem:line-tree}
A graph is the line graph of a tree if and only if it is a connected block graph in which every cutpoint belongs to exactly two blocks. 
\end{lemma}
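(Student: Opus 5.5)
The statement characterizes line graphs of trees: a graph is $L(T)$ for a tree $T$ exactly when it is a connected block graph (every block is a clique) in which every cut vertex lies in exactly two blocks. I will prove the two directions separately, inducting on the number of edges of $T$ for one and on the number of blocks for the other.

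\emph{Forward direction.} Let $T$ be a tree with at least one edge. Then $L(T)$ is connected, since any two edges of $T$ are joined by a path in $T$, and consecutive edges along that path are adjacent in $L(T)$.

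For each vertex $v$ of $T$, let $Q_v$ be the set of edges of $T$ incident to $v$. Each $Q_v$ induces a clique in $L(T)$. Every edge of $L(T)$ comes from two edges of $T$ sharing a vertex, so it lies in exactly one $Q_v$; uniqueness holds because two distinct edges of a simple graph share at most one vertex.

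Next I show that every cycle of $L(T)$ lies inside a single $Q_v$. A cycle in $L(T)$ that passes through two different cliques would yield a closed walk in $T$ that uses more than one edge without backtracking, hence a cycle in $T$, which is impossible. So the maximal cliques $Q_v$ with $\deg v\geq 2$ are 2-connected pieces with no cycle running between them. Therefore they are exactly the blocks, and $L(T)$ is a block graph.

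Each vertex $e=\{u,v\}$ of $L(T)$ lies only in $Q_u$ and $Q_v$, so it belongs to at most two blocks. It is a cut vertex exactly when both $u$ and $v$ are non-leaves, in which case it lies in exactly two blocks.

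\emph{Reverse direction.} Let $H$ be a connected block graph in which every cut vertex lies in exactly two blocks. I will build the tree from the block–cutpoint structure. Take one vertex $x_B$ for each block $B$. For each non-cut vertex $w$ of $H$, which lies in a unique block $B(w)$, take a new leaf $y_w$. Define $T$ by these edges:
\begin{itemize}
\item join $x_B$ and $x_{B'}$ whenever $B$ and $B'$ share a cut vertex;
\item join $y_w$ to $x_{B(w)}$ for each non-cut vertex $w$.
\end{itemize}

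Because each cut vertex lies in exactly two blocks, the first kind of edge is in bijection with the cut vertices. The graph on the $x_B$ is then the block–cutpoint tree with each degree-2 cut node suppressed, so it is a tree. Adding the leaves $y_w$ keeps $T$ a tree.

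The edges of $T$ now correspond bijectively to the vertices of $H$: a cut vertex $c$ corresponds to the edge $x_Bx_{B'}$ for the two blocks containing $c$, and a non-cut vertex $w$ corresponds to the edge $y_wx_{B(w)}$. It remains to check that this bijection preserves adjacency. Two vertices of $H$ are adjacent exactly when they lie in a common block $B$, since blocks are cliques and every edge lies in some block. On the tree side, the edges of $T$ incident to $x_B$ correspond exactly to the vertices of $B$. Two edges of $T$ sharing a vertex must share some $x_B$, because the leaves $y_w$ have degree one. Hence adjacency in $H$ matches adjacency in $L(T)$, and $H\cong L(T)$.

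\emph{Main obstacle.} The delicate step is the forward claim that the cliques $Q_v$ are exactly the blocks, that is, that no cycle of $L(T)$ uses edges from two different stars. I would handle it by contradiction: from a cycle in $L(T)$ that leaves a clique $Q_v$, extract a closed trail in $T$ of length at least $3$ with no immediate backtrack, which contradicts acyclicity of $T$.

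Small cases need separate mention. $L(K_2)=K_1$ is a single block with no cut vertices. If $T$ has no edges, $L(T)$ is empty, so the statement is read for trees with at least one edge.
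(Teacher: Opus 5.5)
Your proof is correct. The paper gives no argument to compare it with: the lemma is quoted from the cited reference and used only as background, so what you have is a self-contained replacement rather than a different route. Your forward direction identifies the blocks of $L(T)$ with the stars $Q_v$, $\deg v\ge 2$, and uses that a vertex $e=\{u,v\}$ lies only in $Q_u$ and $Q_v$. Your reverse direction builds $T$ from the block--cutpoint tree with its degree-$2$ cut nodes suppressed, plus a pendant edge for each non-cut vertex. Both are sound, and the reverse direction has the side benefit of an explicit reconstruction of $T$ from the graph. Your reading of ``block graph'' as ``every block is complete'' is the standard one.

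Three places deserve one more line each in a write-up.

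\emph{The step you flag as delicate.} Make the extraction explicit. For a cycle $e_1e_2\cdots e_ke_1$ in $L(T)$, let $v_i$ be the common endpoint of $e_i$ and $e_{i+1}$ (indices mod $k$). If $v_{i-1}\neq v_i$ then $e_i=\{v_{i-1},v_i\}$. So deleting consecutive repetitions from the cyclic sequence $(v_i)$ gives a closed walk in $T$ whose steps use pairwise distinct edges $e_i$. If the $v_i$ are not all equal, this walk has length at least $3$: a single change cannot return to the start, and length $2$ would reuse an edge. A closed trail of that length contains a cycle, which is impossible in $T$. Hence all $v_i$ coincide and the cycle lies in one $Q_v$.

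\emph{From cycles to blocks.} To get ``the $Q_v$ with $|Q_v|\ge 2$ are exactly the blocks'' and the cut-vertex count, cite two standard facts. Two edges lie in the same block if and only if they lie on a common cycle. A vertex of a connected graph is a cut vertex if and only if it lies in at least two blocks.

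\emph{The bijection in the converse.} Matching cut vertices with the edges $x_Bx_{B'}$ needs that two blocks share at most one vertex. This follows from the block--cutpoint graph being a tree, which you already invoke: two shared cut vertices $c,c'$ would give the cycle $B\,c\,B'\,c'\,B$.
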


The following classical result establishes a bridge between matchings in $G$ and independent sets in $L(G)$. For the sake of completion, we provide a self-contained proof.

\begin{theorem}
    For any graph $G$,
    $$P_{L(G)}(x)= \sum_{k\geq 0} m_k(G)\, x^k$$
\end{theorem}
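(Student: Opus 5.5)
The plan is to establish a direct bijection between matchings of $G$ and independent sets of $L(G)$ that preserves size. Then compare coefficients of $x^k$ on both sides for each $k\geq 0$.

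Fix $k\geq 0$ and let $M=\{f_1,\dots,f_k\}$ be a set of $k$ edges of $G$. By the definition of $L(G)$, two vertices $f_i, f_j$ of $L(G)$ are adjacent exactly when the edges $f_i,f_j$ of $G$ share an endpoint. Hence $M$, viewed as a set of vertices of $L(G)$, is independent in $L(G)$ if and only if no two edges of $M$ share a vertex in $G$, that is, if and only if $M$ is a $k$-matching of $G$.

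Since $V(L(G))=E(G)$, the identity map on subsets of $E(G)$ is therefore a bijection between $k$-matchings of $G$ and independent sets of size $k$ of $L(G)$. This gives $g_k(L(G))=m_k(G)$ for all $k\geq 1$. For $k=0$ both sides equal $1$, by the conventions $g_0=1$ and $m_0(G)=1$.

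Summing over $k$ yields the identity. Both sums are finite, since they vanish for $k>\nu_G=\alpha(L(G))$. In particular the argument also shows $\alpha(L(G))=\nu_G$.

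There is no real obstacle here. The only care needed is with edge cases: the empty set, and graphs with no edges, where $L(G)$ is the empty graph and both sides equal $1$.
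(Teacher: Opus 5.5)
Your proposal is correct and follows the paper's own proof: both identify $V(L(G))=E(G)$, observe that a $k$-subset of edges is independent in $L(G)$ exactly when it is a $k$-matching of $G$, and then compare coefficients. Your separate treatment of the $k=0$ term and of edgeless graphs covers details the paper leaves implicit.
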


\begin{proof}
By definition, an independent set of size $k$ in $L(G)$ is a set of $k$ vertices of $L(G)$ with no two of which are adjacent. Since the vertices of $L(G)$ correspond to the edges of $G$, two vertices in $L(G)$ are adjacent if and only if the corresponding edges in $G$ share an endpoint. Therefore, an independent set of size $k$ in $L(G)$ corresponds precisely to a matching of size $k$ in $G$. This completes the proof.
\end{proof}

\begin{remark}\label{rem:m=i}
 Let $Q_G(x):=\sum_{k\geq 0} m_k(G)\, x^k$. Then it is easy to see that
    $\mu_G(x) = x^n\, Q_G(-x^{-2}).$ Thus, \begin{equation}
   \mu_G(x)=x^n\, P_{L(G)} (-x^{-2}). 
\end{equation}  
\end{remark}

The matching polynomial of a graph encodes information about its matchings, while the characteristic polynomial of a graph captures its spectral properties. For general graphs these polynomials differ, but they coincide precisely when the graph has no cycles.

\begin{theorem}[\protect{\cite{GodsilGutman1981}}]\label{m=a} For a graph $G$,
 $\phi_G(x)=\mu_G(x)$ if and only if $G$ is a forest.
\end{theorem}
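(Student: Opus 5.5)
We prove both directions of the Godsil--Gutman equivalence by comparing the coefficient expansions of $\phi_G$ and $\mu_G$. The tool is the Sachs coefficient theorem. Write $\phi_G(x)=\sum_{i=0}^{n} c_i x^{n-i}$. Then
\[
c_i=\sum_{H}(-1)^{k(H)}2^{c(H)},
\]
where the sum runs over all \emph{elementary} subgraphs $H$ of $G$ on $i$ vertices. These are the subgraphs each of whose components is either a single edge $K_2$ or a cycle. Here $k(H)$ counts the components of $H$ and $c(H)$ counts its cycle components. We obtain this either by expanding $\det(xI-A(G))$ as a sum over permutations, or by quoting Sachs directly. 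In the permutation expansion, the permutations that contribute are those whose cycle decomposition consists of fixed points, transpositions along edges, and cyclic orderings of graph cycles. Each graph cycle of length at least $3$ admits exactly two orientations, which produces the factor $2^{c(H)}$. The sign works out to $(-1)^{k(H)}$ once the factor $(-1)^i$ from $-A$ is taken into account.

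For the direction ``forest $\Rightarrow$ equal'': if $G$ is a forest, every elementary subgraph is a disjoint union of edges, that is, a matching. Then $c_i=0$ for $i$ odd and $c_{2k}=(-1)^k m_k(G)$. This is exactly the coefficient of $x^{n-2k}$ in $\mu_G(x)$, so $\phi_G=\mu_G$.

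For the converse, assume $G$ contains a cycle, and let $g$ be its girth. We will show that $c_g\neq(-1)^{g/2}m_{g/2}(G)$, where the right-hand side is read as $0$ when $g$ is odd. Since $\mu_G$ has that coefficient in position $x^{n-g}$, this gives $\phi_G\neq\mu_G$. An elementary subgraph on $g$ vertices containing a cycle must be exactly one $g$-cycle: any cycle has length at least $g$, so no vertices remain for further components. Hence
\[
c_g=(-1)^{g/2}m_{g/2}(G)+(-1)\cdot 2\cdot N_g,
\]
where $N_g\geq 1$ is the number of $g$-cycles and the first term is absent when $g$ is odd. The extra contribution $-2N_g$ is nonzero, so the coefficients differ. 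The only care needed is the sign bookkeeping in the Sachs formula. That is the step I expect to be the main obstacle, and I would verify it on $K_3$ to fix conventions: $\phi_{K_3}=x^3-3x-2$ and $\mu_{K_3}=x^3-3x$, differing precisely by the $-2$ from the triangle.
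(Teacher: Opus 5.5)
Your proof is correct and complete. The Sachs expansion, with the sign $(-1)^{i-k(H)}\cdot(-1)^{i}=(-1)^{k(H)}$ as you describe, gives $c_{2k}=(-1)^k m_k(G)$ and $c_{2k+1}=0$ when $G$ is a forest. For a graph of girth $g$, the only elementary subgraphs on $g$ vertices that are not matchings are the $g$-cycles themselves. Hence the coefficient of $x^{n-g}$ in $\phi_G-\mu_G$ is $-2N_g\neq 0$, for $g$ of either parity.

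The paper gives no proof of this statement; it only cites Godsil--Gutman, so there is no internal argument to compare against. The standard treatment in the literature groups the same Sachs sum by its cycle part. This gives $\phi_G(x)=\sum_{C}(-2)^{c(C)}\mu_{G\setminus V(C)}(x)$, where $C$ ranges over subgraphs that are vertex-disjoint unions of cycles, including $C=\emptyset$. The forest direction is the case where only $C=\emptyset$ occurs. For the converse, each term with $C\neq\emptyset$ has degree $n-|V(C)|\leq n-g$, and the terms of degree exactly $n-g$ come from single $g$-cycles, each with leading coefficient $-2$. So they cannot cancel, which is exactly your girth argument in that language. Your version is self-contained and handles both directions uniformly.
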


By Remark \ref{rem:m=i} and Theorem \ref{m=a}, it follows that for any forest $F$ on $n$ vertices, 
\begin{equation}\label{eq:char=in}
    \phi_F(x) = x^n P_{L(F)} (-x^{-2}).
\end{equation}
Putting $x=1$ and $x=-1$ in \eqref{eq:char=in}, we see that if either $1$ or $-1$ is an eigenvalue of $F$, then $P_{L(F)}(-1)=0$. Hence, $-1$ is a root of the independence polynomial of $L(F)$.
We now compare the corresponding multiplicities. Since $F$ is a forest, it is bipartite, and therefore its nonzero eigenvalues occur in pairs $\pm\lambda$ with the same multiplicity. In particular, the multiplicities of $1$ and $-1$ as eigenvalues of $F$ are equal. Moreover, the factor $x^n$ in \eqref{eq:char=in} is nonzero at $x=1$ and at $x=-1$. The change of variable $t=-x^{-2}$ is locally invertible at both points, since $\frac{d}{dx}(-x^{-2})=2x^{-3},$ is nonzero at $x=1$ and at $x=-1$. Therefore, the order of vanishing of $P_{L(F)}(t)$ at $t=-1$ is equal to the order of vanishing of $\phi_F(x)$ at $x=1$, and also to the order of vanishing of $\phi_F(x)$ at $x=-1$. Consequently, the multiplicity of $-1$ as a root of $P_{L(F)}$ is equal to the common multiplicity of $1$ and $-1$ as eigenvalues of $F$.


Let the connected components of $F$ be the trees $T_1, \dots, T_r$. Since the characteristic polynomial of a disjoint union is the product of the characteristic polynomials of its components, we have $\phi_F(x) = \prod_{i=1}^r \phi_{T_i}(x)$. Thus, $\pm1$ is a root of $\phi_{F}(x)$ if and only if it is a root of $\phi_{T_i}(x)$ for some $1 \leq i \leq r$. 

Similarly, the connected components of $L(F)$ are exactly the line graphs of the components of $F$, namely $L(T_1), \dots, L(T_r)$. It then follows that $P_{L(F)}(x) = \prod_{i=1}^{r} P_{L(T_i)}(x)$. Consequently, $\pm1$ is a root of $P_{L(F)}(x)$ if and only if $\pm1$ is a root of $P_{L(T_i)}(x)$ for some $1 \leq i \leq r$.

Our goal is to study the case when $-1$ is a root of the independence polynomial of the line graph of a forest $F$. By the preceding discussion, it suffices to consider the case where $F$ is a tree and $-1$ is its eigenvalue.

For trees, two characterizations of the multiplicity of $-1$ as an eigenvalue are known in terms of the number of pendant vertices. Using these characterizations, we determine line graphs whose independence polynomials have $-1$ as a root of a given multiplicity. Before proceeding, we introduce some terminology.

Let $T$ be a tree on $n$ vertices with $p$ pendant vertices. A {\textit{major vertex}} in $T$ is a vertex that is adjacent to at least three other vertices. For any two vertices $x$ and $y$ in $T$, the {\textit{distance}} $d(x,y)$ is defined as the length of the unique path between them. 

In \cite{MR4011062}, Wang et. al.\ established an upper bound for $\mult_{\lambda} \phi_T$ in terms of the number of pendant vertices as follows.

\begin{theorem}  [\protect{\cite[Corollary~2.10]{MR4011062}}]
Let $T$ be a tree with $p$ pendant vertices. Then \[\mult_{\lambda} \phi_T \leq p - 1.\] 
\end{theorem}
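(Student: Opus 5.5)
The plan is to avoid interlacing and instead bound the dimension of the eigenspace directly: I will show that an eigenvector of $A(T)$ for $\lambda$ is determined by its values on $p-1$ of the pendant vertices. The multiplicity of $\lambda$ as a root of $\phi_T$ equals the dimension of the $\lambda$-eigenspace of $A(T)$, because $A(T)$ is real symmetric and hence diagonalizable. So it suffices to exhibit an injective linear map from that eigenspace into $\mathbb{R}^{p-1}$.

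For the setup, we may assume $T$ has at least two vertices, so $p\geq 2$. Fix a pendant vertex $r$ and root $T$ at $r$. Every vertex $u\neq r$ then has a parent, and a set of children, possibly empty. The vertices $u\neq r$ with no children are exactly the $p-1$ pendant vertices other than $r$; call this set $L$. Let $E_\lambda$ be the eigenspace. I consider the restriction map $E_\lambda\to\mathbb{R}^{L}$, $x\mapsto (x_\ell)_{\ell\in L}$, and aim to show its kernel is trivial. This gives $\mult_\lambda\phi_T=\dim E_\lambda\leq |L|=p-1$.

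For injectivity, suppose $x\in E_\lambda$ vanishes on $L$. The eigen-equation at a vertex $w$ reads
\[
\lambda x_w=\sum_{y\in N(w)}x_y .
\]
I prove by induction on the height of $u$ (the length of the longest downward path from $u$) that $x$ vanishes on $u$ and all its descendants, for every $u\neq r$. For height $0$, $u\in L$, so $x_u=0$ by assumption. For height $h>0$, every child $w$ of $u$ has smaller height, so $x$ vanishes on $w$ and on all of $w$'s children. The equation at $w$ then reduces to $\lambda\cdot 0=x_u+0$, giving $x_u=0$; together with the inductive hypothesis on the children's subtrees, this proves the claim for $u$. Finally, $r$ has a unique neighbour $w$, and $x$ vanishes on $w$ and its children by the claim. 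The equation at $w$ gives $x_r=0$. Hence $x=0$.

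I do not expect a serious obstacle. The only delicate point is organising the induction so that it runs upward from the leaves and uses the eigen-equation at the child rather than at the vertex itself. This is what lets $x_u$ be isolated: all other neighbours of the child are its own children, which are already known to vanish. A first idea of deleting a major vertex and applying Cauchy interlacing loses about two in the count, which is why I prefer this direct eigenvector argument. The path case, where $p=2$ and all eigenvalues are simple, is recovered automatically as a special case.
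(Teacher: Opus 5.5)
Your proof is correct. There is no argument in the paper to compare it with: the bound is quoted from \cite{MR4011062} without proof, and no later argument uses it. What actually feeds into $\mult_{-1}P_{L(T)}$ via \eqref{eq:char=in} are the equality characterizations in Theorems~\ref{thm:p-1} and~\ref{thm:p-2}, taken from \cite{MR4506593} and \cite{chang2024characterizationtreestmt}. So your argument is a self-contained replacement for the citation, and it proves somewhat more than is stated:

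\begin{itemize}
\item It shows that, for every $\lambda$ at once, a $\lambda$-eigenvector is determined by its values on all pendant vertices except one arbitrarily chosen leaf. It needs no interlacing or Parter--Wiener input.
\item The only property used at a vertex $w$ is that the entry coupling $w$ to its parent is nonzero. Hence the same argument bounds multiplicities for every real symmetric matrix whose off-diagonal support is $E(T)$, with arbitrary diagonal (for example, weighted adjacency matrices).
\item Your propagation viewpoint makes the arithmetic in Theorem~\ref{thm:p-1} transparent. For $\lambda=-1$ the eigen-equation along a pendant path is $x_{k+1}=-x_k-x_{k-1}$. Read from a leaf, the values are $a,-a,0,a,-a,0,\dots$, vanishing exactly at distance $\equiv 2 \pmod 3$. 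This matches the condition $d(v,u)\equiv 2 \pmod 3$ there, and also the condition $n\equiv 2 \pmod 3$ for paths.
\end{itemize}

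One correction: ``we may assume $T$ has at least two vertices'' is not a reduction. For $T=K_1$ we have $p=0$ and $\mult_0\phi_T=1>p-1$, so the statement as written is false there. The condition $|V(T)|\geq 2$, equivalently $p\geq 2$ as Theorem~\ref{thm:p-1} states explicitly, is a tacit hypothesis of the theorem, and you should state it as one.
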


In \cite{MR4506593}, the authors provide a complete characterization of the trees that attain this upper bound for $\lambda=-1$.

\begin{theorem}[\protect{\cite[Theorem~2.9]{MR4506593}}]\label{thm:p-1} Let $T$ be a tree with $p \geq 2$ pendant vertices. Then $\mult_{-1} \phi_T=p-1$ if and only if one of the following conditions holds.
    \begin{enumerate}
        \item[\rm{(i)}] $T \cong P_n$ with $n \equiv 2 \pmod 3$; 
        \item[\rm{(ii)}] $d(v, u) \equiv 2 \pmod 3$ for every pendant vertex $v$ and major vertex $u$ of $T$.
    \end{enumerate}
\end{theorem}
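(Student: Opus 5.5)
The plan is to prove both directions by induction on the number $p$ of pendant vertices. Two standard tools from the spectral theory of trees carry the argument.

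\textbf{(a) Paths.} The eigenvalues of $P_m$ are $2\cos(k\pi/(m+1))$, so $\mult_{-1}\phi_{P_m}\in\{0,1\}$, with value $1$ exactly when $m\equiv 2 \pmod 3$.

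\textbf{(b) Interlacing and Parter vertices.} Interlacing gives $|\mult_{-1}\phi_{T\setminus\{u\}}-\mult_{-1}\phi_T|\le 1$ for every vertex $u$. The Parter--Wiener lemma adds a sharper statement. Suppose some branch of $T$ at $u$ (a component of $T\setminus\{u\}$) has $-1$ as an eigenvalue and loses it when its neighbour of $u$ is deleted, i.e.\ it is a \emph{downer} branch. Then
\[
\mult_{-1}\phi_{T\setminus\{u\}}=\mult_{-1}\phi_T+1 .
\]
For a pendant path $P_\ell$ hanging at $u$, with $\ell\equiv 2\pmod 3$, the path is a downer branch. Indeed, deleting its end adjacent to $u$ leaves $P_{\ell-1}$ with $\ell-1\equiv 1\pmod 3$, so $-1$ is no longer an eigenvalue.

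\textbf{Base case.} If $p=2$, then $T=P_n$, and (a) gives $\mult_{-1}\phi_T=1=p-1$ iff $n\equiv 2\pmod 3$. This is condition (i); condition (ii) is vacuous here. Since a path has no major vertex, we use (i) for paths and (ii) only when $T$ has a major vertex.

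\textbf{Inductive step.} Let $p\ge 3$, so $T$ has a major vertex. Choose a major vertex $u$ that carries a pendant path $P_\ell$ ending at a pendant vertex $v$, so $d(v,u)=\ell$. Let $T'=T\setminus V(P_\ell)$; it has $p-1$ pendant vertices. The key claim is
\[
\mult_{-1}\phi_T\le \mult_{-1}\phi_{T'}+1,
\]
with equality forcing $\ell\equiv 2\pmod 3$.

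\emph{Case $\ell\not\equiv 2\pmod 3$.} Apply interlacing at $u$ together with (a): $T\setminus\{u\}=(T'\setminus\{u\})\sqcup P_\ell$, and $P_\ell$ contributes nothing at $-1$. Comparing $T$ and $T'$ through the common subgraph $T'\setminus\{u\}$ then yields no gain, so $\mult_{-1}\phi_T\le\mult_{-1}\phi_{T'}$.

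\emph{Case $\ell\equiv 2\pmod 3$.} Then $u$ is Parter in $T$ by (b). Apply (b) again in $T'$, or use interlacing if $u$ is not Parter there. This expresses $\mult_{-1}\phi_T$ as $\mult_{-1}\phi_{T'\setminus\{u\}}$, which is at most $\mult_{-1}\phi_{T'}+1$.

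Combining the claim with the induction hypothesis $\mult_{-1}\phi_{T'}\le p-2$ gives the bound $p-1$. Equality holds iff $\ell\equiv2\pmod 3$ and $\mult_{-1}\phi_{T'}=p-2$. Running this over every pendant path shows that equality forces $d(v,u)\equiv 2\pmod 3$ for each pendant vertex $v$ and its nearest major vertex $u$. The converse direction is the same computation read backwards: build $T$ by attaching pendant paths of length $\equiv 2\pmod 3$, and use (b) to see that the multiplicity rises by exactly one at each attachment.

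\textbf{Main obstacle.} The delicate point is when $u$ has degree exactly $3$. After deleting $P_\ell$, the vertex $u$ is no longer major in $T'$, and two paths through $u$ merge into one. The induction hypothesis for $T'$ is then a statement about distances measured in $T'$, to its own major vertices. We need the following to hold in that case:
\[
\text{($\ast$) for every pendant vertex $v$ and every major vertex $u$, not merely the nearest one, } d(v,u)\equiv 2 \pmod 3 .
\]
If ($\ast$) holds, then $d(u,u')\equiv 0\pmod 3$ for every pair of major vertices $u,u'$. Distances from pendant vertices of $T'$ through the now-ordinary vertex $u$ to the next major vertex then stay $\equiv 2\pmod 3$. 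We also need the converse: if the condition holds for $T'$ and $\ell\equiv 2$, then it holds for $T$. I would first verify this bookkeeping carefully, including the degenerate case where $T'$ becomes a path and must satisfy (i), i.e.\ have $n'\equiv 2\pmod 3$. Only after that would I finalize the inequality chain above.
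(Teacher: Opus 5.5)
\textbf{There is a genuine gap: your key claim is false.} The paper gives no proof of this statement; it is quoted from the literature, so your argument has to stand on its own.

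\textbf{The case $\ell\not\equiv 2$.} You assert $\mult_{-1}\phi_T\le\mult_{-1}\phi_{T'}$. Interlacing through $T\setminus\{u\}=(T'\setminus\{u\})\sqcup P_\ell$ only gives $\mult_{-1}\phi_T\le\mult_{-1}\phi_{T'\setminus\{u\}}+1\le\mult_{-1}\phi_{T'}+2$, and the assertion itself fails. Take the spider $T$ with center $u$ and legs of lengths $3,3,1$, and remove the leg of length $\ell=1$. Then $T'=P_7$ has $\mult_{-1}\phi_{T'}=0$. But $T$ has a $(-1)$-eigenvector: value $1$ at $u$, values $0,-1,1$ going outward along each long leg, and $-1$ on the short leg. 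Hence $\mult_{-1}\phi_T=1=\mult_{-1}\phi_{T'}+1$ with $\ell=1$, so equality does not force $\ell\equiv 2$.

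The tool that controls this is the edge expansion at the edge joining $u$ to the path: $\phi_T=\phi_{P_\ell}\,\phi_{T'}-\phi_{P_{\ell-1}}\,\phi_{T'\setminus\{u\}}$. Combined with $\mult_{-1}\phi_T\le\mult_{-1}\phi_{T\setminus\{u\}}+1$, it gives:
\begin{enumerate}
\item[(1)] $\mult_{-1}\phi_T=\mult_{-1}\phi_{T'}$ when $\ell\equiv 0$;
\item[(2)] $\mult_{-1}\phi_T\le\mult_{-1}\phi_{T'}+1$ when $\ell\equiv 1$, with equality possible only if $u$ is neutral in $T'$ (i.e.\ $\mult_{-1}\phi_{T'\setminus\{u\}}=\mult_{-1}\phi_{T'}$) and the two leading terms cancel.
\end{enumerate}
For the induction you therefore need an extra argument that this cannot happen when $T'$ is extremal, for instance that the attachment vertex is never neutral there. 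The proposal does not supply one.

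\textbf{The case $\ell\equiv 2$.} Here (b) gives $\mult_{-1}\phi_T=\mult_{-1}\phi_{T'\setminus\{u\}}$. So equality requires $u$ to be Parter in $T'$, not merely $\mult_{-1}\phi_{T'}=p-2$. For example, attaching $P_2$ at the second vertex of $P_5$ gives the spider with legs $1,2,3$. It has $\mult_{-1}\phi_T=0$, although $\ell=2$ and $\mult_{-1}\phi_{P_5}=1=p-2$. The same omission breaks your converse as stated, since the multiplicity rises by one at an attachment only if $u$ is Parter in the smaller tree.

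\textbf{How to close the gap.} This missing Parter condition is exactly what resolves your `main obstacle' ($\ast$). Always remove a pendant path at a major vertex $u$ that carries at least two pendant paths; such a vertex exists.
\begin{enumerate}
\item[(a)] In the converse, the remaining pendant path at $u$ has length $\equiv 2$, so it is a downer branch of $u$ in $T'$, and $u$ is Parter there.
\item[(b)] In the forward direction with $\deg_T u=3$, $u$ becomes a degree-$2$ vertex on a pendant path of the extremal tree $T'$. You must prove that such a vertex is Parter only if its distance to the pendant end is $\equiv 2$. This gives $\ell'\equiv 2$ for the other path at $u$ and $d(u,u')\equiv 0$ to the next major vertex $u'$, from which (ii) for $T$ follows.
\end{enumerate}
Your overall strategy can work, but these Parter/neutral/downer determinations inside extremal trees are the real content of the proof, and they are currently missing.
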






\begin{figure}[h]
    \centering
\scalebox{1.2}{

\tikzset{every picture/.style={line width=0.75pt}} 


}
\caption{A tree $T$ and its line graph $L(T)$}
\label{fig:TandLT}
\end{figure}

We elucidate this through an example.

\begin{example}
  Consider the tree $T$ illustrated in Figure \ref{fig:TandLT}. This tree has five pendant vertices $\{1, 11, 13, 15, 20\}$ and two major vertices $\{3, 9\}$. Since $T$ satisfies condition (ii) of Theorem \ref{thm:p-1}, the multiplicity of the eigenvalue $-1$ is $\mult_{-1} \phi_T = 4$. Furthermore, Equation \eqref{eq:char=in} implies that $-1$ is also a root of the independence polynomial $P_{L(T)}(x)$ with multiplicity four.

Direct computation yields the characteristic polynomial of $T$:
\begin{equation*}
    \phi_T(x) = (x-1)^4(x+1)^4(x^{12}-15x^{10}+83x^8-204x^6+202x^4-39x^2+1),
\end{equation*}
and the independence polynomial of the line graph $L(T)$:
\begin{equation*}
    P_{L(T)}(x) = (x+1)^4(x^6+39x^5+202x^4+204x^3+83x^2+15x+1).
\end{equation*}
It is easily verified that these polynomials satisfy the relationship $\phi_T(x) = x^{20} P_{L(T)}(-x^{-2})$.
\end{example}

In \cite{chang2024characterizationtreestmt}, the authors characterize trees that have an eigenvalue $\lambda$ with multiplicity two less than the number of pendant vertices. To provide this characterization, they recursively define two families of trees, $\{\Gamma_i(\lambda)\}_{i\geq0}$ and $\{\Gamma_i^2(\lambda)\}_{i\geq0}$. We define these families for the particular case $\lambda=-1$, and for the sake of simplicity, we write $\Gamma_i(-1)$ as $\Gamma_i$ and $\Gamma_i^2(-1)$ as $\Gamma_i^2$. The first family $\{\Gamma_j\}_{j\geq 0}$ of trees is defined as follows.

\begin{itemize}
    \item $\Gamma_0:=\{P_n:n\equiv 2 \pmod 3\}.$
    
    \item $\Gamma_1$ denotes the set of trees $T$ containing a unique major vertex $w_1$ such that the forest $T-w_1$, obtained by deleting $w_1$, is the union of at least three components from $\Gamma_0$, where the neighbor of $w_1$ in each component is a pendant vertex of that component.

    
    \item For $j \ge 2$, $\Gamma_j$ consists of all trees $T$ satisfying:
    \begin{enumerate}
        \item $T$ has exactly $j$ major vertices;
        \item there exists a major vertex $w_j$ of $T$ such that $T - w_j$ has exactly one component from $\Gamma_{j-1}$ and all other components belong to $\Gamma_0$;
        \item the neighbor of $w_j$ in each component of $T - w_j$ is a pendant vertex of that component.
    \end{enumerate}
\end{itemize}


\begin{figure}[h!]
    \centering

\tikzset{every picture/.style={line width=0.75pt}} 


    
\caption{Construction of a tree in $\Gamma_3$.}
\label{fig:gamma_3}
\end{figure}

Using the family $\{\Gamma_j\}$, we now define the second family $\{\Gamma_j^2\}_{j\geq 0}$ of trees.

\begin{itemize}
    \item $\Gamma_0^2:=\{P_n:n\equiv 1 \pmod 3\}.$
    \item $\Gamma_1^2$ denotes the set of trees $T$ with a unique major vertex $w_1$ satisfying either of the following conditions:
    \begin{enumerate}
        \item $T-w_1$ has exactly three components, all from $\Gamma_0^2$ such that the neighbor of $w_1$ in each component is a pendant vertex of that component.
        \item $T-w_1$ has exactly one component from $\Gamma_0^2$ and all other components from $\Gamma_0$ such that the neighbor of $w_1$ in each component is a pendant vertex of that component.
    \end{enumerate}
 \item $\Gamma_2^2$ denotes the set of trees $T$ with exactly two major vertices, and there is a major vertex $w_2$ of $T$ such that either of the following conditions holds:
 \begin{enumerate}
     \item $T-w_2$ has exactly one component from $\Gamma_1^2$ and all the other components from $\Gamma_0$ such that the neighbor of $w_2$ in each component is a pendant vertex of that component.
     \item $T-w_2$ has exactly one component, say $T_1$, from $\Gamma_1$ and all other components from $\Gamma_0$, where the unique neighbor of $w_2$ lying in $T_1$ is not a pendant vertex of $T_1$; other neighbors of $w_2$ in each component are pendant vertices of those components.
 \end{enumerate}

 \begin{figure}[h!]
    \centering
 \scalebox{1.2}{
\tikzset{every picture/.style={line width=0.75pt}} 


}   
\caption{A tree $T\in \Gamma_3^2$ and its line graph $L(T)$}
\label{fig:gamma_32}
\end{figure}   
 \item For $j \ge 3$, $\Gamma_j^2$ consists of all trees $T$ with exactly $j$ major vertices, and there is a major vertex $w_j$ such that one of the following conditions holds: 
    \begin{enumerate}
    \item $T-w_j$ has exactly one component from $\Gamma_{j-1}^2$ and all other components from $\Gamma_0$ such that the neighbor of $w_j$ in each component is a pendant vertex of that component.
    \item $T-w_j$ has exactly one component, say $T_1$, from $\Gamma_{j-1}$ and all other components from $\Gamma_0$, where the unique neighbor of $w_j$ lying in $T_1$ is not a pendant vertex of $T_1$; other neighbors of $w_j$ in each component are pendant vertices of those components.
    \item $T-w_j$ has exactly one component, say $T_2$, from $\Gamma_{j-1}$; exactly one component, say $T_2$ from $\Gamma_{0}^2$ and all other components from $\Gamma_0$  such that the neighbor of $w_j$ in each component is a pendant vertex of that component.
    \end{enumerate}
\end{itemize}

The following result from \cite{chang2024characterizationtreestmt} provides a characterization of trees that have $-1$ as an eigenvalue with a multiplicity exactly two less than the number of pendant vertices.

\begin{theorem}
[\protect{\cite[Theorems~1.4 and 1.5]{chang2024characterizationtreestmt}}]\label{thm:p-2} Let $T$ be a tree with $p \geq 3$ pendant vertices and $m$ major vertices, and suppose that $-1$ is an eigenvalue of $T$. Then
$\mult_{-1} \phi_T=p-2
$
if and only if $T \in \Gamma_k^2$.
\end{theorem}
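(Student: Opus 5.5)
Since this statement is quoted from \cite{chang2024characterizationtreestmt}, the plan is to obtain it by specializing their general result, valid for an arbitrary eigenvalue $\lambda$, to $\lambda=-1$. The first step is to confirm that the families $\{\Gamma_j\}$ and $\{\Gamma_j^2\}$ defined above coincide with $\{\Gamma_j(\lambda)\}$ and $\{\Gamma_j^2(\lambda)\}$ at $\lambda=-1$.

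\textbf{Matching the base cases.} Everything is built recursively from paths, so it suffices to check the base classes. The eigenvalues of $P_n$ are $2\cos\bigl(\pi k/(n+1)\bigr)$ for $1\le k\le n$. Hence $-1=2\cos(2\pi/3)$ is an eigenvalue of $P_n$ if and only if $3\mid n+1$, that is, $n\equiv 2 \pmod 3$. In that case it is a simple eigenvalue, so $\mult_{-1}\phi_{P_n}=1=p-1$, which is consistent with Theorem~\ref{thm:p-1}(i). This identifies $\Gamma_0=\Gamma_0(-1)$ as the paths having $-1$ as an eigenvalue. Likewise $\Gamma_0^2$ is the class of paths $P_n$ with $n\equiv 1\pmod 3$. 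For these, $n+1\equiv 2\pmod 3$, so $-1$ is not an eigenvalue, and moreover $P_{n-1}$ and the relevant subpaths do have $-1$. This is the ``deficient path'' condition used in \cite{chang2024characterizationtreestmt}. The recursive steps (attaching components at a major vertex $w_j$ through pendant or non-pendant neighbours) are stated there with $\lambda$ as a parameter and transfer verbatim. Theorem~\ref{thm:p-2} is then exactly their Theorems~1.4 and~1.5 at $\lambda=-1$.

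\textbf{Outline of the underlying argument.} If one wants a self-contained proof, I would argue by induction on the number $m$ of major vertices, using Parter--Wiener theory. When $-1$ is an eigenvalue of a tree $T$, there is a Parter vertex $w$ with
\[
\mult_{-1}\phi_{T-w}=\mult_{-1}\phi_T+1,
\]
and $w$ can be chosen to be a major vertex. Deleting $w$ splits $T$ into branches. For each branch $B$, Theorem~\ref{thm:p-1} (equivalently \cite[Corollary~2.10]{MR4011062}) gives $\mult_{-1}\phi_B\le p(B)-1$. Summing over branches and comparing with $p(T)$ shows that $\mult_{-1}\phi_T=p-2$ forces all but one branch to be extremal, namely in $\Gamma_0$ or attached pendantly. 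The remaining branch has defect exactly one, or is extremal but attached at a non-pendant vertex. This matches clauses (1)--(3) in the definition of $\Gamma_j^2$, and the induction hypothesis applied to the exceptional branch closes the argument. The converse direction is a direct multiplicity count through the same Parter vertex.

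\textbf{Expected obstacle.} The hardest step is the bookkeeping of the pendant count $p$ when a branch is attached at a non-pendant vertex, since such a vertex contributes no pendant of the branch to $T$. One must show this case produces defect exactly $2$ and no more. I would handle it first by examining where the Parter vertex sits relative to the attachment point.
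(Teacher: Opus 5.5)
\textbf{The specialization fails.} Your main route (quote the general-$\lambda$ theorem of \cite{chang2024characterizationtreestmt} and specialize to $\lambda=-1$) is what the paper does: it gives no proof and only cites the result. But the step you call the first one, checking that the families defined here really are the $\lambda=-1$ specializations, is asserted rather than checked, and it fails. With $\Gamma_0^2=\{P_n : n\equiv 1 \pmod 3\}$, the ``only if'' direction is false.

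Let $T$ be the tree obtained by joining a new vertex $w$ to an end of each of $P_2,P_2,P_3$. It has $8$ vertices, $p=3$, and one major vertex. Expanding at $w$,
\[
\phi_T(x)=x(x^2-1)^2(x^3-2x)-2x(x^2-1)(x^3-2x)-(x^2-1)^3=(x^2-1)(x^6-6x^4+8x^2-1).
\]
The second factor equals $2$ at $x=-1$, so $\mult_{-1}\phi_T=1=p-2$. Equivalently, $P_{L(T)}(x)=1+7x+14x^2+9x^3+x^4$ has a simple root at $-1$. Yet $T-w=P_2\sqcup P_2\sqcup P_3$, and $P_3$ lies in neither $\Gamma_0$ nor $\Gamma_0^2$. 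So $T\notin\Gamma_1^2$, and $T$ lies in no other $\Gamma_j^2$ because it has exactly one major vertex. The spider with legs $3,3,1$ is a second counterexample.

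Your justification of the base class is also wrong on its own terms. For $n\equiv 1\pmod 3$, $-1$ is not an eigenvalue of $P_{n-1}$, since that would require $n-1\equiv 2$; it is $P_{n-2}$ and $P_{n+1}$ that have it.

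The missing idea is the correct base class. It should be the analogue of $\Gamma_0$: paths with $\mult_{-1}\phi_P=p(P)-2=0$, that is, $\{P_n: n\not\equiv 2 \pmod 3\}$. A direct spider computation confirms this, using $\phi_{P_n}(-1)=1,-1,0$ for $n\equiv 0,1,2$ together with the recurrence $\phi_T=x\phi_{T-w}-\sum_{u\sim w}\phi_{T-w-u}$. A spider with $d\ge 3$ legs has $\mult_{-1}\phi_T=d-2$ and $-1$ in its spectrum exactly in two cases:
\begin{enumerate}
\item $d-1$ legs have $-1$ as an eigenvalue and the remaining leg has residue $0$ or $1$;
\item $d=3$ and the leg residues are $\{1,0,0\}$.
\end{enumerate}
With the corrected base class, clauses (1) and (2) of $\Gamma_1^2$ capture exactly these two cases. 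So ``the recursive steps transfer verbatim'' cannot be accepted without redoing the specialization against the source's definitions. As transcribed, the statement cannot be proved. Also, $\Gamma_k^2$ should presumably read $\Gamma_m^2$.

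\textbf{Gap in the self-contained outline.} Your alternative argument breaks at its first step. The Parter--Wiener theorem guarantees a Parter vertex of degree at least $3$ only when the multiplicity is at least $2$. When $p=3$, the target multiplicity $p-2$ is $1$, and a major Parter vertex need not exist. In the spider with legs $3,3,1$, the unique major vertex $w$ satisfies $\mult_{-1}\phi_{T-w}=\mult_{-1}\phi_{P_3\sqcup P_3\sqcup P_1}=0$, so $w$ is a downer. The only Parter vertices are the two degree-$2$ neighbours of $w$ on the legs of length $3$; deleting one leaves $P_2\sqcup P_5$. So the multiplicity-one case, which includes the base of your induction on $m$, needs a separate direct computation of $\phi_T(-1)$ such as the one above. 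That computation is precisely what exposes the wrong base class.
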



\begin{example}
Consider the tree $T$ in Figure~\ref{fig:gamma_32}. Observe that $T \in \Gamma_3^2$ with $w_3=10$, and that $T$ has seven pendant vertices. By Theorem~\ref{thm:p-2}, we obtain $\mult_{-1} \phi_T=5.$ By Equation~\eqref{eq:char=in}, it follows that $\mult_{-1} P_{L(T)}=5$. 

Direct computation yields the characteristic polynomial of $T$: $$\phi_T(x)=x^2 (x - 1)^5 (x + 1)^5 (x^2 - 3)^2 (x^{12} - 16x^{10} + 93x^8 - 237x^6 + 249x^4 - 70x^2 + 5)$$
 and the independence polynomial of the line graph $L(T)$:
  $$P_{L(T)}(x)=(3x + 1)^2  (x + 1)^5  (5x^6 + 70x^5 + 249x^4 + 237x^3 + 93x^2 + 16x + 1).$$
It is easily verified that these polynomials satisfy the relationship $\phi_T(x) = x^{28} P_{L(T)}(-x^{-2})$.  
\end{example}


\section{To commutative algebra: $\mathfrak{a}$-invariant and regularity}\label{sec:algebra}

Let $\mathbb{K}$ be a field, and $A = \bigoplus_{i\geq 0}A_i$
be a standard graded $\mathbb{K}$-algebra. The Hilbert series of $A$ is defined as
\[H_A(t) = \sum_{i \geq 0} \mathrm{dim}_{\mathbb{K}}A_i~t^i.\]
 Let $d$ be the krull dimension of $A$, it is known that the Hilbert series of $A$ can be written as a rational function 
\[H_A(t) = \frac{h_A(t)}{(1-t)^d},\]
for a unique integer polynomial $h_A(t)$ with $h_A(1)\neq 0$. The polynomial $h_A(t)$ is known as the \emph{$h$-polynomial} of $A$, and the difference between the degree of $h$-polynomial and the dimension of $A$ (i.e. $\mathrm{deg}~h_A(t) - \mathrm{dim}~A$) is known to be the \emph{$\mathfrak{a}$-invariant} of $A$, denoted by $\mathfrak{a}(A)$. 

Throughout this section let $R = \mathbb{K}[x_1,\ldots,x_n]$ denote the standard graded polynomial ring in $n$ variables over the field $\mathbb{K}$. Let $G$ be a finite simple graph, then one can associate a quadratic square-free monomial ideal $I(G)$ to $G$, known as the \emph{edge ideal} of $G$, as following:
\[I(G) = (x_ix_j \mid \{i,j\} \in E(G)) \subseteq R.\] 

By \cite[Corollary B.4.1]{Vasconcelos98}, one has
\begin{equation} \label{h-degree_reg_ineq}
\mathrm{deg}~h_{R/I(G)}(t) - \operatorname{reg}(R/I(G)) \leq  \mathrm{dim}(R/I(G)) -  \mathrm{depth}(R/I(G)).
\end{equation}

Since,  $\mathrm{deg}~h_{R/I(G)}(t)= \mathfrak{a}(R/I(G)) + \mathrm{dim}(R/I(G))$, we have  
$$
\operatorname{reg}(R/I(G)) \geq \mathfrak{a}(R/I(G))  +  \mathrm{depth}(R/I(G)).
$$

Furthermore, if $I(G)$ is a Cohen--Macaulay ideal \cite[Lemma 3]{Bigedi_Herzog17} or $I(G)$ has a pure resolution \cite[Pg 153]{Bruns_Herzog93}, then equality holds in (\ref{h-degree_reg_ineq}). In particular, if $I(G)$ is a Cohen--Macaulay ideal, then 
$$
\operatorname{reg}(R/I(G)) = \mathfrak{a}(R/I(G)) + \mathrm{dim}(R/I(G))
$$

For a graph $G$, it is known that the independence number $\alpha(G)$ is equal to the Krull dimension of $R/I(G)$~(e.g., see \cite[Remark 2.11]{biermann-et-al.2026}). Also, by \cite[Theorem 4.4]{biermann-et-al.2026}, we have
\[ \mathfrak{a}(R/I(G)) + \mathrm{dim}(R/I(G)) = \mathrm{deg}~h_{R/I(G)}(t) = \alpha(G)- \mult_{-1}P_G.\]
Thus, one can observe that $\mathfrak{a}(R/I(G)) = - \mult_{-1}P_G.$ Using this correspondence, our results from previous sections can be translated directly into interesting algebraic results. 

Our first algebraic result translates Theorem~\ref{thm:branch-algorithm}, concerning the vanishing of the $\mathfrak{a}$-invariant of the graded algebra $R/I(G)$ associated with pseudo-forests.

\begin{theorem}
Let $G$ be a pseudo-forest and $v_1,\dots, v_t$ a maximal support sequence of $G$ for some integer $t$. Let $H = G \setminus \cup_{i=1}^t N_G[v_i]$. Then the $\mathfrak{a}$-invariant of  $R/I(G)$ vanishes if and only if $H$ has no isolated vertex. 
\end{theorem}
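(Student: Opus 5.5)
The plan is to reduce the statement to Theorem~\ref{thm:branch-algorithm} through the dictionary set up just before it. By \cite[Theorem 4.4]{biermann-et-al.2026} and the computation preceding the statement, we have $\mathfrak{a}(R/I(G)) = -\mult_{-1}P_G$. Hence the $\mathfrak{a}$-invariant of $R/I(G)$ vanishes if and only if $\mult_{-1}P_G=0$. That in turn holds exactly when $-1$ is not a root of $P_G$, i.e., when $P_G(-1)\neq 0$. So the statement is equivalent to: $P_G(-1)\neq 0$ if and only if $H$ has no isolated vertex.

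Next I would invoke Theorem~\ref{thm:branch-algorithm} for the pseudo-forest $G$ and its maximal support sequence $v_1,\dots,v_t$. That theorem says $H$ is a union of cycles and isolated vertices. With $a,b,c,d$ as defined there, it gives $P_G(-1)=0$ when $d>0$, and $P_G(-1)=(-1)^{t+b+c}2^{a+c}$ when $d=0$.

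Two directions follow. If $H$ has an isolated vertex, then $d>0$, so $P_G(-1)=0$ and the $\mathfrak{a}$-invariant is $-\mult_{-1}P_G\leq -1\neq 0$. If $H$ has no isolated vertex, then $d=0$, so $P_G(-1)=\pm 2^{a+c}\neq 0$, giving $\mult_{-1}P_G=0$ and $\mathfrak{a}(R/I(G))=0$.

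The only point needing care is the edge case $G$ empty (no vertices), or $H$ empty. Then $P_H=1$ and $d=0$, and the formula still gives a nonzero value, which is consistent. There is no real obstacle: the substance lies entirely in Theorem~\ref{thm:branch-algorithm} and the identification $\mathfrak{a}(R/I(G))=-\mult_{-1}P_G$. I would just make explicit that "vanishing of $\mathfrak{a}$" means the multiplicity is zero, and that for any polynomial this is equivalent to nonvanishing at $-1$.
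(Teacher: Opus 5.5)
Your proposal is correct and follows the paper's route: the paper states this result as a direct translation of Theorem~\ref{thm:branch-algorithm} through the identity $\mathfrak{a}(R/I(G)) = -\mult_{-1}P_G$. You spell out the step the paper leaves implicit, namely that vanishing of the $\mathfrak{a}$-invariant is equivalent to $P_G(-1)\neq 0$, which by that theorem happens exactly when $d=0$, since then $P_G(-1)=\pm 2^{a+c}$.
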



The next result translates Theorem~\ref{thm:tree-description}, which provides a structural characterization of trees for which the $\mathfrak{a}$-invariant vanishes.

\begin{theorem}
    Let $T$ denote a tree. The $\mathfrak{a}$-invariant of $R/I(T)$ is non-zero if and only if $T$ is isomorphic to a grafting on
    some other tree $T'$.
\end{theorem}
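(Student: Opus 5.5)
The plan is to reduce the statement to Theorem~\ref{thm:tree-description} through the dictionary established just above. The $\mathfrak{a}$-invariant of $R/I(T)$ equals $-\mult_{-1}P_T$, which combines \cite[Theorem~4.4]{biermann-et-al.2026} with the identification $\dim R/I(T)=\alpha(T)$. So $\mathfrak{a}(R/I(T))\neq 0$ holds exactly when $\mult_{-1}P_T\geq 1$, that is, exactly when $P_T(-1)=0$.

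The steps, in order, are as follows.

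First, record that $P_T$ is a polynomial with constant term $1$. Hence $\mult_{-1}P_T$ is a well-defined nonnegative integer, and it is positive precisely when $-1$ is a root.

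Second, invoke the displayed identity
\[
\mathfrak{a}(R/I(T))=\deg h_{R/I(T)}(t)-\dim R/I(T)=\bigl(\alpha(T)-\mult_{-1}P_T\bigr)-\alpha(T)=-\mult_{-1}P_T .
\]
From this we get that $\mathfrak{a}(R/I(T))\neq 0$ if and only if $P_T(-1)=0$.

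Third, apply Theorem~\ref{thm:tree-description}. It says $P_T(-1)=0$ if and only if $T\cong\mathcal{G}(T',\mathcal{C})$ for some tree $T'$ and some family $\mathcal{C}$ of rooted trees corresponding to $T'$.

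Fourth, check that the phrase ``a grafting on some other tree $T'$'' in the statement means exactly this. There is one subtlety here. The remark after Theorem~\ref{thm:tree-description} uses ``grafting'' with $\mathcal{C}$ a family of rooted \emph{graphs}. However, $T$ is a tree, and every $T_{u,e}$ embeds in $\mathcal{G}(T',\mathcal{C})\cong T$ as the subgraph hanging at $w_{u,e}$. So each $T_{u,e}$ is connected and acyclic, i.e.\ a tree, and the two readings agree. The word ``other'' also needs a comment, because the base case $T'=T=K_1$ with $\mathcal{C}=\emptyset$ is allowed. I would read ``other'' as ``some'' and not as requiring $T'\not\cong T$.

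The only real obstacle is bookkeeping rather than mathematics. It consists of confirming that the identity $\deg h_{R/I(G)}=\alpha(G)-\mult_{-1}P_G$ from \cite{biermann-et-al.2026} applies to every tree with the stated conventions, including trees with no edges, and of the graph-versus-tree point about $\mathcal{C}$ handled in the fourth step. Once these are settled, the statement is an immediate translation of Theorem~\ref{thm:tree-description}.
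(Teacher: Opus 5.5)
Your proposal is correct and follows the paper's route: you use $\mathfrak{a}(R/I(T))=-\mult_{-1}P_T$ to see that the $\mathfrak{a}$-invariant is nonzero exactly when $P_T(-1)=0$, and then apply Theorem~\ref{thm:tree-description}. Your extra remarks are sound refinements that the paper leaves implicit: the family $\mathcal{C}$ is forced to consist of trees once the grafting is a tree, and ``other'' must be read as ``some'' for the case $T=K_1$.
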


As a consequence of the previous results, we obtain families of graphs for which the Castelnuovo--Mumford regularity of $R/I(G)$ is bounded below by the depth of $R/I(G)$. Note that, in general, there is no direct relationship between $\operatorname{reg}(R/I(G))$ and $\operatorname{depth}(R/I(G))$, even when $G$ is a tree. In fact, for trees, the depth can exceed the regularity by an arbitrarily large amount.

To see this, let $P_n$ denote the path on $n$ vertices and consider the whiskered graph $G=\mathcal{W}(P_n)$. By Lemma~\ref{lem:G*}, we have
\[
\dim(R/I(G))=\alpha(G)=n.
\]
Since the whiskering of any graph is Cohen--Macaulay (see \cite{Villarreal}), it follows that
\[
\operatorname{depth}(R/I(G))=\dim(R/I(G))=n.
\]
On the other hand, by \cite[Lemma~21]{Woodroofe},
\[
\operatorname{reg}(R/I(G) = \left\lfloor \frac{n+1}{2}\right\rfloor.
\]
Therefore,
\[
\operatorname{depth}(R/I(G)) - \operatorname{reg}(R/I(G)) = n-\left\lfloor \frac{n+1}{2}\right\rfloor,
\]
which grows arbitrarily large as $n$ increases. Consequently, even within the class of trees, the depth of $R/I(G)$ can be substantially larger than its Castelnuovo--Mumford regularity.

The following result provides nontrivial families of graphs for which the inequality goes in the opposite direction, namely,
$\operatorname{reg}(R/I(G)) \geq \operatorname{depth}(R/I(G)).$


\begin{corollary}
Let $G$ be a graph. Then
$$
\operatorname{reg}(R/I(G)) \geq \mathrm{depth}(R/I(G))
$$
in each of the following cases:
\begin{enumerate}
    \item $G$ is a pseudo-forest admitting a maximal support sequence $v_1,\dots,v_t$ such that the graph
    $ G \setminus \cup_{i=1}^t N_G[v_i]$ has no isolated vertex;
    
    \item $G$ is a tree that is not a grafting on another tree.
\end{enumerate}
\end{corollary}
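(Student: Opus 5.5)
The plan is to pass through the identity $\mathfrak{a}(R/I(G)) = -\mult_{-1}P_G$ and the inequality $\operatorname{reg}(R/I(G)) \geq \mathfrak{a}(R/I(G)) + \operatorname{depth}(R/I(G))$, both recorded just before the statement. Since $\mathfrak{a}(R/I(G)) = -\mult_{-1}P_G \leq 0$ always, the desired inequality $\operatorname{reg}(R/I(G)) \geq \operatorname{depth}(R/I(G))$ follows in every case where $\mathfrak{a}(R/I(G))=0$. This is equivalent to $\mult_{-1}P_G=0$, that is, $P_G(-1)\neq 0$. So the whole proof reduces to checking $P_G(-1)\neq 0$ in each of the two listed cases.

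For case (1), I apply Theorem~\ref{thm:branch-algorithm}. Let $G$ be a pseudo-forest and $v_1,\dots,v_t$ a maximal support sequence for which $H=G\setminus \cup_{i=1}^t N_G[v_i]$ has no isolated vertex, so $d=0$ in the notation of that theorem. It then gives $P_G(-1)=(-1)^{t+b+c}2^{a+c}$, which is nonzero. Hence $\mult_{-1}P_G=0$ and $\mathfrak{a}(R/I(G))=0$.

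For case (2), I apply Theorem~\ref{thm:tree-description}. A tree $T$ satisfies $P_T(-1)=0$ if and only if it is isomorphic to some grafting $\mathcal{G}(T',\mathcal{C})$ with $T'$ a tree and $\mathcal{C}$ a family of rooted trees. If $T$ is not a grafting on another tree, then $P_T(-1)\neq 0$, so again $\mathfrak{a}(R/I(T))=0$.

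The only delicate point is the direction of the inequality. We have $\operatorname{reg} \geq \mathfrak{a} + \operatorname{depth}$ with $\mathfrak{a}\leq 0$, so the $\mathfrak{a}$-invariant must vanish exactly for the conclusion $\operatorname{reg}\geq\operatorname{depth}$ to follow. This is why the hypotheses are phrased as non-vanishing of $P_G(-1)$. I should also note that in case (2) a grafting on a tree with rooted \emph{graphs} that is itself a tree necessarily uses rooted trees, since $\mathcal{G}(T',\mathcal{C})$ being a tree forces each $T_{u,e}$ to be a tree. Hence the phrase ``grafting on another tree'' matches the hypothesis of Theorem~\ref{thm:tree-description}.
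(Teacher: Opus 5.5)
Your proposal is correct and follows the paper's own derivation: combine $\operatorname{reg}(R/I(G)) \geq \mathfrak{a}(R/I(G)) + \operatorname{depth}(R/I(G))$ with $\mathfrak{a}(R/I(G)) = -\mult_{-1}P_G$, then use Theorem~\ref{thm:branch-algorithm} (with $d=0$) or the forward direction of Theorem~\ref{thm:tree-description} to get $P_G(-1)\neq 0$, hence $\mathfrak{a}(R/I(G))=0$. Your closing remark about rooted graphs is harmless but not needed, since only that forward direction is used; your reading of ``another tree'' as ``some tree $T'$'' is the right one and matters, because it excludes $K_1=\mathcal{G}(K_1,\emptyset)$, and for $K_1$ one has $\operatorname{reg}=0<1=\operatorname{depth}$.
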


The following result provides lower bounds for the $\mathfrak{a}$-invariant and the Castelnuovo--Mumford regularity of connected graphs whose edge ideals satisfy $\dim(R/I(G)) = n-2$. These bounds are obtained as a consequence of Theorem \ref{thm:alpha=n-2} and depend only on the number of vertices of $G$.

\begin{theorem}
Let $G$ be a connected graph on $n \geq 2$ vertices such that $\dim(R/I(G)) = n-2$. Then
$$
\mathfrak{a}(G) \geq 1-\left\lfloor \frac{n}{2}\right\rfloor,
$$
and if $I(G)$ is Cohen--Macaulay or admits a pure resolution,
$$
\operatorname{reg}(R/I(G)) \geq \left\lceil \frac{n}{2}\right\rceil -1.
$$
\end{theorem}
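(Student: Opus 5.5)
The first bound is a direct translation of Theorem~\ref{thm:alpha=n-2} through the identity $\mathfrak{a}(R/I(G)) = -\mult_{-1}P_G$. Since $\dim(R/I(G))=\alpha(G)$, the hypothesis says that $G$ is a connected graph on $n$ vertices with $\alpha(G)=n-2$. The inclusion $(\subseteq)$ of Theorem~\ref{thm:alpha=n-2} gives
\[
\mult_{-1}P_G\leq \lfloor n/2\rfloor -1 .
\]
Negating this yields
\[
\mathfrak{a}(R/I(G))=-\mult_{-1}P_G\geq 1-\lfloor n/2\rfloor .
\]
For $n=2$ the hypothesis $\alpha(G)=0$ is impossible, since every graph on at least one vertex has $\alpha(G)\geq 1$. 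So the statement is vacuous there, and the bound is consistent in any case.

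For the regularity bound, we use the inequality $\operatorname{reg}(R/I(G))\geq \mathfrak{a}(R/I(G))+\operatorname{depth}(R/I(G))$ derived above from (\ref{h-degree_reg_ineq}). Under either hypothesis, Cohen--Macaulay or pure resolution, (\ref{h-degree_reg_ineq}) holds with equality. This gives
\[
\operatorname{reg}(R/I(G))=\mathfrak{a}(R/I(G))+\operatorname{depth}(R/I(G)).
\]

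The only step that needs some care is bounding the depth from below. In the Cohen--Macaulay case, $\operatorname{depth}=\dim=n-2$. Then
\[
\operatorname{reg}\geq n-2+1-\lfloor n/2\rfloor=\lceil n/2\rceil-1,
\]
using $n-\lfloor n/2\rfloor=\lceil n/2\rceil$.

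In the pure-resolution case the depth need not equal $n-2$. Here I would instead use $\deg h_{R/I(G)}(t)=\alpha(G)-\mult_{-1}P_G$, which by Theorem~\ref{thm:alpha=n-2} is at least $n-2-(\lfloor n/2\rfloor-1)=\lceil n/2\rceil-1$. Equality in (\ref{h-degree_reg_ineq}) rewrites as
\[
\operatorname{reg}=\deg h-(\dim-\operatorname{depth}).
\]
So one needs $\operatorname{depth}=\dim$, or some other argument. I expect this to be the main obstacle.

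As a fallback I would argue directly in the pure-resolution case. By the proof of Theorem~\ref{thm:alpha=n-2}, $G$ is very restricted: $[n-2]$ is independent and joined to the two vertices $n-1,n$. I would verify that such an edge ideal with a pure resolution is Cohen--Macaulay, or that its regularity is at least $\lceil n/2\rceil-1$, by computing the regularity from the induced matching or co-chordal structure of these graphs.
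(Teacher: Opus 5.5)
Your derivation of the bound on $\mathfrak{a}$ and of the Cohen--Macaulay case is correct, and it is the derivation the paper intends. The paper gives no argument beyond citing Theorem~\ref{thm:alpha=n-2}: it uses $\mathfrak{a}(R/I(G))=-\mult_{-1}P_G\geq 1-\lfloor n/2\rfloor$ together with $\operatorname{reg}=\mathfrak{a}+\dim$, and it records the latter only for Cohen--Macaulay $I(G)$. The obstacle you flag in the pure-resolution case is real and cannot be removed, because that part of the statement is false. Equality in (\ref{h-degree_reg_ineq}) gives only $\operatorname{reg}=\mathfrak{a}+\operatorname{depth}$. Your fallback (showing that a pure resolution forces Cohen--Macaulayness for these graphs, or bounding the regularity directly) fails for the graphs of Case~2 in the proof of Theorem~\ref{thm:alpha=n-2}. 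The paper's one-line justification has the same gap.

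Here is why. Suppose $\{n-1,n\}\in E(G)$. In the complement $\overline{G}$, the vertices $n-1$ and $n$ are then non-adjacent, and their neighbourhoods lie inside the clique $[n-2]$. So both are simplicial, and $\overline{G}$ is chordal. By Fr\"oberg's theorem, $I(G)$ has a $2$-linear, hence pure, resolution, and $\operatorname{reg}(R/I(G))=1$. This is smaller than $\lceil n/2\rceil-1$ as soon as $n\geq 5$.

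For a concrete instance, take the double star $T_{1,2}$, with edges $\{1,2\},\{1,3\},\{2,4\},\{2,5\}$. It is connected with $\alpha=3=n-2$ and $P_{T_{1,2}}(x)=(1+x)(1+4x+2x^2)$, so $\mathfrak{a}=-1$ and $h(t)=1+2t-t^2$. The Hilbert series numerator over $(1-t)^5$ is $1-4t^2+4t^3-t^4$. Since the resolution is linear, the Betti numbers are $4,4,1$ in degrees $2,3,4$. Hence the projective dimension is $3$ and $\operatorname{depth}=2<3=\dim$. Therefore $\operatorname{reg}(R/I(T_{1,2}))=1=\mathfrak{a}+\operatorname{depth}$, while the claimed bound is $2$. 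Equivalently, any two edges of $T_{1,2}$ meet or are joined by $\{1,2\}$, so its induced matching number is $1$, and for a forest this equals the regularity. The same failure occurs for every $T_{r,s}$ with $r+s=n-2\geq 3$.

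What you can actually prove is:
\begin{itemize}
\item the bound on $\mathfrak{a}$;
\item the regularity bound under the Cohen--Macaulay hypothesis;
\item in the pure-resolution case, only $\operatorname{reg}(R/I(G))=\mathfrak{a}(R/I(G))+\operatorname{depth}(R/I(G))$.
\end{itemize}
The pure-resolution clause should be dropped from the statement.
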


The following result is immediate from the correspondence between the $\mathfrak{a}$-invariant and multiplicity of graph, dimension and the independence number, and the definitions of the sets $\mathcal{MI}(n)$ and $\mathcal{MI}^c(n)$.

\begin{theorem}
     A lattice point $p$ in $\mathbb{Z}^2$ can be realized as a pair $(\mathfrak{a}(R/I(G)), \mathrm{dim}~R/I(G))$ for some graph $G$ on $n$ vertices if and only if $p$ belongs to the reflection of the set $\mathcal{MI}(n)$ along ordinate axis, and can be realized as a pair $(\mathfrak{a}(R/I(G)), \mathrm{dim}~R/I(G))$ for some connected graph $G$ on $n$ vertices if and only if $p$ belongs to the reflection of the set $\mathcal{MI}^c(n)$ along ordinate axis.
\end{theorem}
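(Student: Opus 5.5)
This is a direct translation, so the plan is to transport everything through the dictionary already established in this section. For every graph $G$ on $n$ vertices we have $\dim R/I(G)=\alpha(G)$ (\cite[Remark 2.11]{biermann-et-al.2026}). We also have $\mathfrak{a}(R/I(G))=-\mult_{-1}P_G$, obtained from \cite[Theorem 4.4]{biermann-et-al.2026} together with $\deg h_{R/I(G)}(t)=\mathfrak{a}(R/I(G))+\dim R/I(G)$.

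Define the map $\sigma\colon\mathbb{Z}^2\to\mathbb{Z}^2$ by $\sigma(a,b)=(-a,b)$, the reflection along the ordinate axis. By the dictionary,
\[
\bigl(\mathfrak{a}(R/I(G)),\dim R/I(G)\bigr)=\sigma\bigl(\mult_{-1}P_G,\alpha(G)\bigr)
\]
for each graph $G$.

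For the first claim, fix $n$. A point $p$ is realized by some graph $G$ on $n$ vertices if and only if $p=\sigma(q)$ with $q=(\mult_{-1}P_G,\alpha(G))$. By the definition of $\mathcal{MI}(n)$, such $q$ range exactly over $\mathcal{MI}(n)$. Since $\sigma$ is a bijection, $p$ is realizable if and only if $\sigma(p)\in\mathcal{MI}(n)$, that is, if and only if $p\in\sigma(\mathcal{MI}(n))$.

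The connected case is word-for-word the same, with the quantifier restricted to connected graphs and $\mathcal{MI}(n)$ replaced by $\mathcal{MI}^c(n)$.

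There is no real obstacle. The only point to check is that the identities $\dim=\alpha$ and $\mathfrak{a}=-\mult_{-1}P_G$ hold for all graphs, including edgeless ones where $I(G)=0$. In that case $R/I(G)=R$ has $\mathfrak{a}=-n$, $\dim=n$ and $P_G=(1+x)^n$, which is consistent. If one wants explicit descriptions, one can then substitute Theorem~\ref{thm:MI-n-all-graphs} and the bounds of Theorem~\ref{thm:intro-bounds}.
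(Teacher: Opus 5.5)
Your proof is correct and matches the paper's approach: the paper calls the statement immediate from the dictionary $\dim R/I(G)=\alpha(G)$, $\mathfrak{a}(R/I(G))=-\mult_{-1}P_G$ and the definitions of $\mathcal{MI}(n)$ and $\mathcal{MI}^c(n)$. Your reflection bijection $(a,b)\mapsto(-a,b)$ is exactly that argument written out, and your check of the edgeless case ($\mathfrak{a}=-n$, $\dim=n$, $P_G=(1+x)^n$) is a harmless extra confirmation.
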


Now, from our study of the sets $\mathcal{MI}(n)$ and $\mathcal{MI}^c(n)$, we can classify the lattice points in $\mathbb{Z}^2$ that can be realized as pairs consisting of the $\mathfrak{a}$-invariant and the dimension of the algebra $R/I(G)$ for a graph $G$ on $n$ vertices. The following result follows from the Theorem~\ref{thm:MI-n-all-graphs}.

\begin{theorem}
   Let $n$ be a fixed positive integer. Let   $p$ be a lattice point in $\mathbb{Z}^2$. Then $p = (\mathfrak{a}(R/I(G)), \mathrm{dim}~R/I(G))$ for some graph $G$ on $n$ vertices if and only if $$p \in \{(a,b) \in Q_2 \cap \mathbb{Z}^2 \mid 0 \leq -a < b \leq n-1\} \cup \{(-n,n)\},$$
   where $Q_2$ denotes the upper-left quadrant of the Euclidean plane.
\end{theorem}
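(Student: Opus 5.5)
The statement is the algebraic translation of Theorem~\ref{thm:MI-n-all-graphs}, so the plan is to transport that theorem through the dictionary recalled at the start of this section. For a graph $G$ on $n$ vertices we have $\dim R/I(G)=\alpha(G)$ by \cite[Remark 2.11]{biermann-et-al.2026}, and $\mathfrak{a}(R/I(G))=-\mult_{-1}P_G$, which follows from \cite[Theorem 4.4]{biermann-et-al.2026}. Consequently the map $(a,b)\mapsto(-a,b)$ is a bijection from $\mathcal{MI}(n)$ onto the set of realizable pairs $(\mathfrak{a}(R/I(G)),\dim R/I(G))$, taken over all graphs $G$ on $n$ vertices. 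This is exactly the content of the previous theorem, which identifies the realizable pairs with the reflection of $\mathcal{MI}(n)$ across the ordinate axis.

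The steps, in order, are as follows.

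First, I apply Theorem~\ref{thm:MI-n-all-graphs} to write
\[
\mathcal{MI}(n)=\{(a,b)\in\mathbb{Z}_{\geq 0}^2\mid 0\leq a<b\leq n-1\}\cup\{(n,n)\}.
\]

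Second, I reflect this set. A pair $(a',b)$ is realizable if and only if $(-a',b)\in\mathcal{MI}(n)$. Substituting $a=-a'$, this means either $0\leq -a'<b\leq n-1$ with $b\geq 0$, or $(a',b)=(-n,n)$.

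Third, I check that the first piece lies in $Q_2\cap\mathbb{Z}^2$. The condition $-a'\geq 0$ gives $a'\leq 0$, and the condition $b>-a'\geq 0$ gives $b>0$, so these points lie in the closed upper-left quadrant. Hence intersecting with $Q_2$ imposes no further restriction and only records where the points lie. The isolated point $(-n,n)$ also lies in $Q_2$, and it is realized by the graph of $n$ isolated vertices, for which $I(G)=0$.

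The argument has essentially no obstacle. The one point requiring care is the boundary convention for $Q_2$: it must include the axis $a'=0$, because points $(0,b)$ are realized, for instance by $K_n$ or by $S_n$. With $Q_2$ understood as the closed quadrant, both inclusions follow immediately from the two inclusions of Theorem~\ref{thm:MI-n-all-graphs}.
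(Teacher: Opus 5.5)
Your proposal is correct and takes the same route as the paper: you transport Theorem~\ref{thm:MI-n-all-graphs} through $\dim R/I(G)=\alpha(G)$ and $\mathfrak{a}(R/I(G))=-\mult_{-1}P_G$, then reflect across the ordinate axis. Your remark that $Q_2$ must be read as the closed quadrant, so that the realized points $(0,b)$ are included, is a useful clarification that the paper leaves implicit.
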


As we do not know the set $\mathcal{MI}^c(n)$ completely, we can not completely classify the lattice points in $\mathbb{Z}^2$ that can be realized as pairs consisting of the $\mathfrak{a}$-invariant and the dimension of the algebra $R/I(G)$ for a connected graph $G$ on $n$ vertices. However, when we do not have any restriction on number of vertices, we can completely classify the lattice points in $\mathbb{Z}^2$ that can be realized as pairs consisting of the $\mathfrak{a}$-invariant and the dimension of the algebra $R/I(G)$ for some connected graph $G$. The following result follows from the Theorems~\ref{cor:MI-all-connected} and \ref{thm:MI-n-all-graphs}.

\begin{theorem}
  Let $p$ be a lattice point in $\mathbb{Z}^2$. Then
  \begin{enumerate}
      \item $p = (\mathfrak{a}(R/I(G)), \mathrm{dim}~R/I(G))$ for some graph $G$ if and only if $$p \in \{(a,b) \in Q_2 \cap \mathbb{Z}^2 \mid 0 \leq -a \leq b\}\setminus \{(0,0)\}.$$

      \item $p = (\mathfrak{a}(R/I(G)), \mathrm{dim}~R/I(G))$ for some connected graph $G$ if and only if $$p \in \{(a,b) \in Q_2 \cap \mathbb{Z}^2 \mid 0 \leq -a < b\} \cup \{(-1,1)\}.$$
   \end{enumerate}
\end{theorem}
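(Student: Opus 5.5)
The statement is a direct translation of Corollary~\ref{cor:MI-all-connected} and Theorem~\ref{thm:MI-n-all-graphs} through the dictionary
\[
\mathfrak{a}(R/I(G)) = -\mult_{-1}P_G \quad\text{and}\quad \dim R/I(G) = \alpha(G),
\]
recalled earlier in this section from \cite[Remark 2.11, Theorem 4.4]{biermann-et-al.2026}. So the plan is to consider the map $\sigma\colon (a,b)\mapsto(-a,b)$, the reflection in the ordinate axis. For every graph $G$ it sends the pair $(\mult_{-1}P_G,\alpha(G))$ to $(\mathfrak{a}(R/I(G)),\dim R/I(G))$. Hence the set of realizable pairs $(\mathfrak{a},\dim)$ over all graphs is exactly $\sigma(\mathcal{MI})$. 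Likewise, the set over connected graphs is exactly $\sigma(\mathcal{MI}^c)$, where $\mathcal{MI}^c=\bigcup_n\mathcal{MI}^c(n)$.

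\textbf{Part (1).} Take the union over $n\geq 1$ in Theorem~\ref{thm:MI-n-all-graphs}. Every pair $(a,b)$ with $0\le a<b$ lies in $\mathcal{MI}(n)$ for any $n\ge b+1$. Every pair $(n,n)$ with $n\ge1$ is realized by $n$ isolated vertices. This gives $\mathcal{MI}=\{(a,b)\mid 0\le a\le b\}\setminus\{(0,0)\}$. The pair $(0,0)$ is excluded because every graph considered has $n\ge1$, so $\alpha\ge1$. Applying $\sigma$ gives $\{(a,b)\mid 0\le -a\le b\}\setminus\{(0,0)\}$. These points automatically satisfy $a\le 0\le b$, so intersecting with $Q_2\cap\mathbb{Z}^2$ changes nothing, provided we read $Q_2$ as the closed quadrant so that $a=0$ is allowed.

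\textbf{Part (2).} Apply $\sigma$ to Corollary~\ref{cor:MI-all-connected}, which gives $\mathcal{MI}^c=\{0\le a<b\}\cup\{(1,1)\}$. Its image is $\{(a,b)\mid 0\le -a<b\}\cup\{(-1,1)\}$. The point $(-1,1)$ comes from $K_1$, since $P_{K_1}=1+x$.

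The only real obstacle is bookkeeping rather than mathematics. We must make sure that $R$ has exactly $n=|V(G)|$ variables, so that isolated vertices contribute to $\dim R/I(G)$; this is what makes $(n,n)$ and $(1,1)$ appear. We must also confirm that the convention for $Q_2$ includes the boundary axis $a=0$, because points such as $(0,b)$ (for instance from $K_n$) are realized. Once these conventions are fixed, both statements follow immediately by applying the bijection $\sigma$.
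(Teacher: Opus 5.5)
Your proposal is correct and follows the paper's own argument. The paper obtains both parts by reflecting $\mathcal{MI}$ and $\mathcal{MI}^c$ in the ordinate axis, using the dictionary $\mathfrak{a}(R/I(G))=-\mult_{-1}P_G$ and $\dim R/I(G)=\alpha(G)$, together with Theorem~\ref{thm:MI-n-all-graphs} (taking the union over $n\ge 1$, as you do) and Corollary~\ref{cor:MI-all-connected}.

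You also make explicit two conventions the paper leaves implicit. First, $Q_2$ must be the closed quadrant so that points with $a=0$ are included. Second, $R$ must have exactly $|V(G)|$ variables.
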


\bibliographystyle{amsplain}
\bibliography{refs}

\end{document}